\title{Optimal error bounds on the exponential wave integrator for the nonlinear Schr\"odinger equation with low regularity potential and nonlinearity\thanks{Submitted to the editors DATE.
		\funding{This research is supported by the Ministry of Education, Singapore, under its Academic Research Fund MOE-T2EP20122-0002 (A-8000962-00-00).}}}
\author{Weizhu Bao\thanks{Department of Mathematics, National University of Singapore, Singapore 119076
		(\email{matbaowz@nus.edu.sg}, \url{https://blog.nus.edu.sg/matbwz/}).}
	\and Chushan Wang\thanks{Department of Mathematics, National University of Singapore, Singapore 119076
		(\email{e0546091@u.nus.edu}).}}
\newcommand{\R}{\mathbb{R}}
\newcommand{\Z}{\mathbb{Z}}
\newcommand{\Cl}{{C_\text{Lip}}}
\newcommand{\CL}{{C_L}}
\newcommand{\C}{\mathbb{C}}
\newcommand{\vx}{\mathbf{x}}
\newcommand{\rmd}{\mathrm{d}}
\newcommand{\vphi}{\varphi}
\newcommand{\F}{G}
\newcommand{\Phih}{\Phi_h}
\newcommand{\Phiht}{\Phi_h^{\langle \tau \rangle}}
\newcommand{\psin}[1]{\psi^{[#1]}}
\newcommand{\en}[1]{e^{[#1]}}
\newcommand{\psihn}[1]{\psi^{#1}}
\newcommand{\ehn}[1]{e^{#1}}
\newcommand{\psihhn}[1]{\psi^{\langle #1 \rangle}}
\newcommand{\ehhn}[1]{e^{\langle #1 \rangle}}
\begin{document}

\maketitle

\begin{abstract}
	We establish optimal error bounds for the exponential wave integrator (EWI) applied to the nonlinear Schr\"odinger equation (NLSE) with $ L^\infty $-potential and/or locally Lipschitz nonlinearity under the assumption of $ H^2 $-solution of the NLSE. For the semi-discretization in time by the first-order Gautschi-type EWI, we prove an optimal $ L^2 $-error bound at $ O(\tau) $ with $ \tau>0 $ being the time step size, together with a uniform $ H^2 $-bound of the numerical solution. For the full-discretization scheme obtained by using the Fourier spectral method in space, we prove an optimal $ L^2 $-error bound at $ O(\tau + h^2) $ without any coupling condition between $\tau$ and $h$, where $ h>0 $ is the mesh size. In addition, for $ W^{1, 4} $-potential and a little stronger regularity of the nonlinearity, under the assumption of $ H^3 $-solution, we obtain an optimal $ H^1 $-error bound. Furthermore, when the potential is of low regularity but the nonlinearity is sufficiently smooth, we propose an extended Fourier pseudospectral method which has the same error bound as the Fourier spectral method while its computational cost is similar to the standard Fourier pseudospectral method. Our new error bounds greatly improve the existing results for the NLSE with low regularity potential and/or nonlinearity. Extensive numerical results are reported to confirm our error estimates and to demonstrate that they are sharp.
\end{abstract}

\begin{keywords}
nonlinear Schr\"odinger equation, exponential integrator, semi-smooth nonlinearity, bounded potential, error estimate, Fourier spectral method, extended Fourier pseudospectral method
\end{keywords}

\begin{MSCcodes}
35Q55, 65M15, 65M70, 81Q05
\end{MSCcodes}

\section{Introduction}
In this paper, we consider the following nonlinear Schr\"odinger equation (NLSE)
\begin{equation}\label{NLSE}
	\left\{
	\begin{aligned}
		&i \partial_t \psi(\vx, t) = -\Delta \psi(\vx, t) + V(\vx) \psi(\vx, t) + f(|\psi(\vx, t)|^2) \psi(\vx, t), \quad \vx \in \Omega, \  t>0, \\
		&\psi(\vx, 0) = \psi_0(\vx), \quad \vx \in \overline{\Omega},
	\end{aligned}
	\right.
\end{equation}
where $ t $ is time, $\vx = (x_1, \cdots, x_d)^T \in \R^d \  (d=1, 2, 3) $ is the spatial coordinate, $ \psi=\psi(\vx, t) $ is a complex-valued wave function, and $ \Omega = \Pi_{i=1}^d (a_i, b_i) \subset \R^d $ is a bounded domain equipped with periodic boundary condition. Here, $ V = V(\vx):\Omega \rightarrow \R $ is a 
real-valued potential and $ f=f(\rho):[0, \infty) \rightarrow \R $ with $ \rho=|\psi|^2 $ being the density describes the nonlinear interaction. We assume that $ V \in L^\infty(\Omega) $ and $ f(|z|^2)z:\C \rightarrow \C $ is locally Lipschitz continuous, and thus both $ V $ and $ f $ may be of low regularity. 



When $ V(\vx) = |\vx|^2/2 $ and $ f(\rho) = \rho $, the NLSE \cref{NLSE} collapses to the nonlinear Schr\"odinger equation with harmonic potential and cubic nonlinearity (or smooth potential and nonlinearity) or the Gross-Pitaevskii equation (GPE), which has been widely adopted for modeling and simulation in quantum mechanics, nonlinear optics and Bose-Einstein condensation (BEC) \cite{review_2013,ESY,NLS}. For the smooth NLSE with sufficiently smooth initial data $ \psi_0 $, many accurate and efficient numerical methods have been proposed and analyzed in last two decades, including the finite difference method \cite{FD,bao2013,review_2013,Ant}, the exponential wave integrator \cite{bao2014,ExpInt,SymEWI}, the time-splitting method \cite{bao2003JCP,BBD,lubich2008,schratz2016,review_2013,Ant,BCF}, the finite element method \cite{FEM1,FEM2,FEM3,FEM4,henning2017}, etc. Recently, many works are done to analyze and design numerical methods for the cubic NLSE with low regularity initial data $ \psi_0 $ and {with/without potential (see \cite{schratz2016,splitting_low_reg,LRI,LRI_sinum,LRI_error,LRI_general,LRI_fulldisc,LRI_sec,tree1,tree2}} and references therein for other dispersive partial differential equations).

Arising from different physics applications, both $ V $ and $ f $ in \cref{NLSE} may be of low regularity. Typical examples of the low regularity $ L^\infty $-potential include, in many physical contexts, the square-well potential or step potential, which are discontinuous; in the study of BEC in different trapping shape, the power law potential $ V(\vx) = |\vx|^\gamma \ (\gamma > 0) $ \cite{poten_power_law,poten_power_law_2}, and in the analysis of Josephson effect and Anderson localization, some disorder potential \cite{poten_Josephson,poten_anderson}. Low regularity nonlinearity such as $ f(\rho) = \rho^\sigma \ (\sigma>0) $ or $ f(\rho) = \rho \ln \rho $ are considered in, e.g., the Schr\"{o}dinger-Poisson-X$ \alpha $ model \cite{bao2003,SPXalpha}, the Lee-Huang-Yang correction \cite{LHY} which is adopted to model and simulate quantum droplets \cite{QD1,QD2,QD3,QD4}, and the mean-field model for Bose-Fermi mixture \cite{Had,Cai}.


Most numerical methods for the cubic NLSE with smooth potential can be extended straightforwardly to solve the NLSE \cref{NLSE} with $ L^\infty $-potential and/or locally Lipschitz nonlinearity (different from the singular nonlinearity in \cite{sinum2019,bao2019,bao2022,bao2022singular}). However, the performance of these methods are quite different from the smooth case and the error analysis of them is a very subtle and challenging question. For \cref{NLSE} with power-type nonlinearity $ f(\rho) = \rho^\sigma $ and sufficiently smooth potential, the Lie-Trotter time-splitting method is analyzed in \cite{bao2023,choi2021,ignat2011} with reduced convergence order in $ L^2 $-norm when $ \sigma<1/2 $ and in $ H^1 $-norm when $ \sigma<1 $. The analysis of \cref{NLSE} with smooth nonlinearity and $ L^\infty $-potential seems more challenging and the only known convergence result is the one obtained in \cite{henning2017} for the Crank-Nicolson Galerkin scheme, where first order convergence in time and less than second order convergence in space in $ L^2 $-norm are shown under strong assumptions on the exact solution (among others $ \partial_t \psi \in H^2 $), and a coupling condition between the time step size $ \tau $ and the mesh size $ h $. Some low regularity integrators or resonance-based Fourier integrators are also proposed to reduce the regularity requirements on both $ V $ and $ \psi $, while the regularity assumption on $ V $ is still stronger than $ H^1 $ {\cite{zhao2021,tree2,alama2022}}, which still excludes the popular well potential and step potential widely adopted in physics literatures. The main difficulty comes from the low regularity of solution of the NLSE with $ L^\infty $-potential and locally Lipschitz nonlinearity, where only $ H^2 $ well-posedness is guaranteed \cite{kato1987,cazenave2003}, and the low regularity of the potential and the nonlinearity which causes order reduction in local truncation errors and prevent us from obtaining stability estimates in high order Sobolev spaces $ H^\alpha\ (\alpha>d/2) $ (see \cite{bao2023,zhao2021,henning2017} for more detailed discussion). Besides, for the NLSE \cref{NLSE} with purely $ L^\infty $-potential, how to estimate the spatial discretization is also a challenging problem and it turns out that it is very subtle and challenging to estimate the classical methods including the finite difference method, the pseudospectral method and the finite element method \cite{henning2017}.

The main aim of this paper is to establish optimal error bounds for a first order Gautschi-type exponential wave integrator (EWI), {also known as the exponential Euler scheme in the literature \cite{ExpInt}}, applied to the NLSE with $ L^\infty $-potential and/or locally Lipschitz nonlinearity. 
{Our main results are as follows: 
\begin{enumerate}[label=(\roman*),leftmargin=*]
	\item For the semi-discretization in time (EWI \cref{EWI}), we prove an optimal $ L^2 $-error bound at $ O(\tau) $ with $\tau>0$ being the time step size, and a uniform $ H^2 $-bound of the numerical solution, under the assumption of $ H^2 $-solution of the NLSE (see \cref{eq:semi_error_L2} in \cref{thm:error_estimates}). 
	\item For the full discretization of the EWI by using the Fourier spectral method for spatial derivatives (EWI-FS \cref{EWI-FS}), we prove an optimal $ L^2 $-error bound at $ O(\tau + h^2) $ without any coupling condition between $ \tau $ and the mesh size $ h $ (see \cref{eq:full_error_L2} in \cref{thm:error_estimates_FS}). 
	\item For $ W^{1, 4} $-potential and a little more regular nonlinearity, under the assumption of $H^3$-solution, we obtain optimal $ H^1 $-error bounds for EWI and EWI-FS schemes. (see \cref{eq:semi_error_H1} in \cref{thm:error_estimates} and \cref{eq:full_error_H1} in \cref{thm:error_estimates_FS}). 
	\item When the potential is of low regularity but the nonlinearity is sufficiently smooth, we propose an extended Fourier pseudospectral method for spatial discretization of the EWI, leading to EWI-EFP scheme \cref{EWI-EFP}. For the EWI-EFP, we establish optimal error bounds in $L^2$- and $H^1$-norm under the same assumption on the potential and exact solution as the EWI-FS (see \cref{coro:error_estimates_EFP}). However, the computational cost of EWI-EFP is similar to the standard Fourier pseudospectral discretization of the EWI. 
\end{enumerate}
}
Our error bounds greatly improve the previous results for the NLSE with low regularity potential and/or nonlinearity. {In general, compared with the error estimates of classical exponential wave integrators \cite{tree2} and time-splitting methods \cite{bao2023} in the literature}, to obtain optimal error bounds, we reduce the differentiability requirement on the potential by two orders and on the nonlinearity by one order. Moreover, when $ V \in L^\infty $ and $ f $ is smooth as considered in \cite{henning2017}, compared with their results for the Crank-Nicolson Galerkin scheme, we improve the convergence order in $ L^2 $-norm to the optimal first order in time and the optimal second order in space, remove the coupling condition requirement between $ \tau $ and $ h $ in \cite{henning2017}, relax the regularity assumption on the exact solution such that it is theoretically guaranteed, and reduce the computational cost in practical implementation.

Here, we briefly explain why we can obtain the improved error bounds. In general, time-splitting methods and EWIs require weaker regularity on the exact solution to obtain the same order of convergence, compared with finite difference methods. In practical computation, time-splitting methods tend to outperform EWIs when the solution
is smooth, which requires the potential and nonlinearity as well as the initial data are all smooth. The main reason is that time-splitting methods are usually structure-preserving scheme, i.e. they preserve mass conservation, time symmetry, time-transverse invariance, and dispersion relation at the discretized level 
\cite{Ant,review_2013}. On the contrary, when the NLSE \cref{NLSE} involves low regularity potential and/or nonlinearity, leading to a solution with low regularity,  we find that the first-order Gautschi-type EWI offers two major advantages in obtaining optimal error bounds: (i) in obtaining local truncation errors, time-splitting methods need to apply the Laplacian $ \Delta $ to the equation while the EWI only needs to apply $ \partial_t $ to the equation, and thus
the EWI needs weaker regularity requirement on both potential and nonlinearity; and (ii) a smoothing operator is adopted in the EWI scheme to control the dispersion of  high frequencies and thus it helps to keep the numerical solution in $ H^2 $ at each time step, which makes it possible to obtain the stability estimates in high order Sobolev spaces, while it is a challenging and subtle task to establish $H^2$-bounds of the numerical solution by using the time-splitting methods.

The rest of the paper is organized as follows. In Section 2, we present a semi-discretization in time by the first-order Gautschi-type EWI and then a full discretization in space by the Fourier spectral/extended pseudospectral methods.
Sections 3 and 4 are devoted to the error estimates of the semi-discretization scheme and the full-discretization scheme, respectively. Numerical results are reported in Section 5 to confirm the error estimates. Finally, some conclusions are drawn in Section 6. Throughout the paper, we adopt the standard Sobolev spaces as well as their corresponding norms, and denote by $ C $ a generic positive constant independent of the mesh size $ h $ and time step size $ \tau $, and by $ C(\alpha) $ a generic positive constant depending only on the parameter $ \alpha $. The notation $ A \lesssim B $ is used to represent that there exists a generic constant $ C>0 $, such that $ |A| \leq CB $.

\section{The exponential wave integrator Fourier spectral method}
In this secton, we introduce an exponential wave integrator and its spatial discretization to solve the NLSE with low regularity potential and nonlinearity.
For simplicity of the presentation and {to avoid heavy notations}, we only carry out the analysis in 1D and take $ \Omega = (a, b) $. {The only dimension sensitive estimates are the Sobolev embedding into $L^\infty$ and the inverse inequalities to control $L^\infty$-norm by $L^2$-norm. In our analysis, we only use the embedding $H^2(\Omega) \hookrightarrow L^\infty(\Omega)$ which holds for 1D, 2D and 3D, and for the inverse inequalities, we clearly show how it depends on the space dimension. Thus, generalizations to 2D and 3D are straightforward, and the main results remain unchanged.}

We define periodic Sobolev spaces as (see, e.g. \cite{feng2022} for the equivalent definition)
\begin{equation*}
	H_\text{per}^m(\Omega) := \{\phi \in H^m(\Omega) : \phi^{(k)}(a) = \phi^{(k)}(b), \ k=0, \cdots, m-1\}, \quad m \geq 1. 
\end{equation*}

\subsection{Semi-discretization in time by an exponential wave integrator}
Choose a time step size $ \tau > 0 $ and denote time steps as $ t_n = n\tau $ for $ n = 0, 1, \cdots $. By Duhamel's formula, the exact solution of the NLSE \cref{NLSE} is given as
\begin{eqnarray}
\psi(t_{n+1})=&&\psi(t_{n}+\tau)= e^{i \tau \Delta} \psi(t_n)\nonumber\\
		 &&- i \int_0^\tau e^{i(\tau - s)\Delta} \left [ V \psi(t_n + s) + f(|\psi(t_n + s)|^2)\psi(t_n + s) \right ] \rmd s,\quad n\ge0, \label{eq:Duhamel}
\end{eqnarray}
where we abbreviate $\psi(x,t)$ by $\psi(t)$ for simplicity of notations when there is no confusion.
Let $ \psin{n}:=\psin{n}(x) $ be the approximation of $ \psi(x, t_n) $ for $ n \geq 0 $. Applying the approximation $ \psi(t_n + s) \approx \psi(t_n) $ for the integrand in \eqref{eq:Duhamel} and integrating out $ e^{i(\tau - s)\Delta} $ exactly, we get a semi-discretization in time by the first-order Gautschi-type EWI as
\begin{equation}\label{EWI}
	\begin{aligned}
		\psin{n+1} &= \Phi^\tau(\psin{n}) := e^{i \tau \Delta} \psin{n} - i \tau \vphi_1(i \tau \Delta) \left( V \psin{n} + f(|\psin{n}|^2)\psin{n} \right), \quad n \geq 0, \\
		\psin{0} &= \psi_0,
	\end{aligned}
\end{equation}
where $ \vphi_1 $ is an entire function defined as
\begin{equation*}
	\vphi_1(z) = \frac{e^z - 1}{z}, \qquad z \in \C.
\end{equation*}
The operator $ \varphi_1(i \tau \Delta) $ is defined through its action in the Fourier space as
\begin{eqnarray}
	\left (\varphi_1(i \tau \Delta) v \right )(x)
	&&= \sum_{l \in \mathbb{Z}} \varphi_1(- i \tau \mu_l^2 ) \widehat{v}_l e^{i \mu_l (x-a)}  \nonumber\\
	&&= \widehat{v}_0 + \sum_{l \in \mathbb{Z} \setminus \{0\}} \frac{1-e^{-i\tau\mu_l^2}}{i \tau \mu_l^2} \widehat{v}_l e^{i \mu_l (x-a)},
	\quad x \in \Omega, \label{eq:phi1_def}
\end{eqnarray}
where $\mu_l = \frac{2 \pi l}{b-a}$ for $l\in{\mathbb Z}$, and $ \widehat{v}_l \ (l \in \mathbb{Z}) $ are the Fourier coefficients of the function $ v \in L^2(\Omega) $ defined as
\begin{equation}\label{eq:FT}
	\widehat{v}_l = \frac{1}{b-a} \int_a^b v(x) e^{-i \mu_l(x-a)} \rmd x, \quad l \in \mathbb{Z}.
\end{equation}
From \eqref{eq:phi1_def}, noting that $ | 1-e^{-i\theta} | \leq 2 $ for $ \theta \in \R $, we see that
\begin{equation}
	\left | \widehat{(\varphi_1(i \tau \Delta) v)}_l \right | \leq
	\left\{
	\begin{aligned}
		&\frac{2}{\tau} \frac{|\widehat{v}_l|}{\mu_l^2}, && l \in \mathbb{Z} \setminus \{0\}, \\
		&|\widehat{v}_0|, && l = 0,
	\end{aligned}
	\right.
\end{equation}
which implies $ \varphi_1(i \tau \Delta) v \in H^2_\text{per}(\Omega) $ for all $ v \in L^2(\Omega) $. Hence, $ \Phi^\tau $ is indeed a flow in $ H^2_\text{per}(\Omega) $ for any $ V \in L^\infty(\Omega) $, making it possible to obtain uniform $H^2$-bound of the semi-discrete solution with some new analysis techniques we will introduce later. 

In fact, the introduction of the smoothing function $ \varphi_1(i \tau \Delta) $ in
\eqref{EWI} is one of the major advantages of the EWI \cref{EWI} over the time-splitting methods in terms of controlling the dispersion of high frequencies or resonance. With this smoothing function, one can show that the numerical solution
is in $H^2$ at every time step. For comparison, based on the results in \cite{bao2023}
for time-splitting methods applied to the NLSE with semi-sooth nonlinearity, the numerical solution of the semi-discretization is not in $ H^2 $ in general! The situation is even worse if there is purely $ L^\infty $-potential.

\subsection{Full discretization by the Fourier spectral method}
Then we further discretize the semi-discretization \cref{EWI} in space by the Fourier spectral method to obtain a full-discretization scheme.
Choose a mesh size $ h=(b-a)/N $ with $ N $ being a positive integer and denote grid points as
\begin{equation*}
	x_j = a + jh, \quad j = 0, 1, \cdots, N.
\end{equation*}
Define the index sets
\begin{equation*}
	\mathcal{T}_N = \left\{-\frac{N}{2}, \cdots, \frac{N}{2}-1 \right\}, \quad \mathcal{T}_N^0 = \{0, 1, \cdots, N\},
\end{equation*}
and denote
\begin{align}
	&X_N = \text{span}\left\{e^{i \mu_l(x - a)}: l \in \mathcal{T}_N\right\},  \\
	&Y_N = \left\{ v=(v_0, v_1, \cdots, v_N)^T \in \C^{N+1}: v_0 = v_N \right\}.
\end{align}
Let $ P_N:L^2(\Omega) \rightarrow X_N $ be the standard $ L^2 $-projection onto $ X_N $ and $ I_N: Y_N  \rightarrow X_N $ be the standard Fourier interpolation operator as
\begin{align}
	&(P_N u)(x) = \sum_{l \in \mathcal{T}_N} \widehat u_l e^{i \mu_l(x - a)}, \\
	&(I_N v)(x) = \sum_{l \in \mathcal{T}_N} \widetilde v_l e^{i \mu_l(x - a)}, \quad x \in \overline{\Omega} = [a, b],
\end{align}
where $ u \in L^2(\Omega) $, $ v \in Y_N $, $ \widehat{u}_l \ (l \in \mathbb{Z}) $ are the Fourier coefficients of $ u $ defined in \cref{eq:FT} and $ \widetilde v_l \ (l \in \mathcal{T}_N) $ are the discrete Fourier transform coefficients defined as
\begin{equation}
	\widetilde{v}_l = \frac{1}{N} \sum_{j=0}^{N-1} v_j e^{-i \mu_l (x_j - a)}, \quad l \in \mathcal{T}_N.
\end{equation}
Let $ \psihn{n}: = \psihn{n}(x) $ be the approximation of $ \psi(x, t_n) $ for $ n \geq 0 $. Then an exponential wave integrator-Fourier spectral method (EWI-FS) for the NLSE  \eqref{NLSE} is given as
\begin{equation}\label{EWI-FS}
	\begin{aligned}
		\psihn{n+1} &= \Phih^\tau(\psihn{n}) := e^{i \tau \Delta} \psihn{n} - i \tau \vphi_1(i \tau \Delta) P_N \left( V \psihn{n} + f(|\psihn{n}|^2)\psihn{n} \right), \quad n \geq 0, \\
		\psihn{0} &= P_N \psi_0.
	\end{aligned}
\end{equation}
Note that $ \psihn{n} \in X_N $ for $ n \geq 0 $ and we have
\begin{equation}\label{psihn_coef}
	\begin{aligned}
		\widehat{(\psihn{n+1})}_l &= e^{-i \tau \mu_l^2} \widehat{(\psihn{n})}_l - i \tau \vphi_1(-i \tau \mu_l^2) \left( \widehat{(V \psihn{n})}_l + \widehat{\F(\psihn{n})}_l \right), \quad n \geq 0, \\
		\widehat{(\psihn{0})}_l &= \widehat{(\psi_0)}_l, \quad l \in \mathcal{T}_N,
	\end{aligned}
\end{equation}
where $ \F(\psihn{n})(x) = \F(\psihn{n}(x)) := f(|\psihn{n}(x)|^2)\psihn{n}(x) $ for $ x \in \Omega $. We remark here that the EWI-FS is usually implemented by the Fourier pseudospectral method (see, e.g., \cite{bao2014,feng2022}) in practical computations. Of course, due to the low regularity of the potential and/or nonlinearity, it is very hard to establish error bounds for the full-discretization by the Fourier pseudospectral method.

\subsection{Full discretization by an extended Fourier pseudospectral method}
In practice, the Fourier spectral method cannot be efficiently implemented. Here, we propose an extended Fourier pseudospectral method when the potential is of low regularity but the nonlinearity is sufficiently smooth, i.e., we adopt the Fourier spectral method to discretize the linear potential and use the Fourier pseudospectral method to discretize the nonlinearity. This full discretization has two advantages:
(i) we can establish its optimal error bounds, and (ii) the computational cost
of this discretization is similar to the standard Fourier pseudospectral method.

Let $ \psihhn{n}_j $ be the numerical approximation of $ \psi(x_j, t_n) $ for $ j \in \mathcal{T}_N^0 $ and $ n \geq 0 $, and denote $ \psihhn{n}:=(\psihhn{n}_0, \psihhn{n}_1, \cdots, \psihhn{n}_N)^T \in Y_N $. Then an exponential wave integrator-extended Fourier pseudospectral (EWI-EFP) method for the NLSE \eqref{NLSE} reads
\begin{equation}\label{EWI-EFP}
	\begin{aligned}
		\psihhn{n+1}_j
		&= \sum_{l \in \mathcal{T}_N} e^{-i \tau \mu_l^2} \widetilde{(\psihhn{n})_l} e^{i \mu_l(x_j-a)} \\
		&\quad - i \tau \sum_{l \in \mathcal{T}_N} \vphi_1(-i \tau \mu_l^2) \left( \widehat{\left (V I_N \psihhn{n} \right )_l} + \widetilde{\F(\psihhn{n})_l} \right) e^{i \mu_l(x_j-a)}, \quad n \geq 0, \\
		\psihhn{0}_j &= \psi_0(x_j), \quad j \in \mathcal{T}_N^0,
	\end{aligned}
\end{equation}
where $ \F(\psihhn{n})_j = f(|\psihhn{n}_j|^2)\psihhn{n}_j $ for $ j \in \mathcal{T}_N^0 $. To compute the Fourier projection coefficients $ \widehat{\left (V I_N \psihhn{n} \right )_l} $, we use an extended FFT as shown below. Note that $ I_N \psihhn{n} \in X_N $ for all $ n \geq 0 $, and thus we have
\begin{equation}\label{poten_trunc}
	P_N(V I_N \psihhn{n}) = P_N\left (P_{2N}(V) I_N \psihhn{n} \right ), \quad n \geq 0.
\end{equation}
Moreover, since $ P_{2N}(V) I_N \psihhn{n} \in X_{4N} $ and $ I_{4N} $ is an identity on $ X_{4N} $, we have
\begin{equation*}
	 P_{2N}(V) I_N \psihhn{n} = I_{4N}\left (P_{2N}(V) I_N \psihhn{n} \right ), \quad n \geq 0,
\end{equation*}
which plugged into \cref{poten_trunc} yields
\begin{equation}\label{eq:spectral_discretization_of_potential}
	P_N\left (V I_N \psihhn{n}\right ) = P_N I_{4N}\left (P_{2N}(V) I_N \psihhn{n} \right ), \quad n \geq 0,
\end{equation}
where $ P_{2N}(V) $ can be precomputed numerically or analytically, and thus the right hand side of \cref{eq:spectral_discretization_of_potential} can be computed exactly and efficiently using the extended FFT: using FFT for $ P_{2N}(V) I_N \psihhn{n} $ with length $ 4N $ instead of $ N $. As a result, the memory cost is $ O(4N) $ and the computational cost per time step is $ O(4N\log(4N)) $.
Note that $ \psihhn{n} \ (n \geq 0) $ obtained by \cref{EWI-EFP} satisfies
\begin{equation}\label{EWI-FSP_equation}
	\begin{aligned}
		I_N \psihhn{n+1} &= e^{i \tau \Delta} I_N \psihhn{n} - i \tau \vphi_1(i \tau \Delta) \left( P_N \left( V I_N \psihhn{n} \right) + I_N \F (\psihhn{n}) \right), \\
		I_N \psihhn{0} &= I_N \psi_0, \qquad n \geq 0.
	\end{aligned}
\end{equation}

\section{Optimal error bounds for the semi-discretization \eqref{EWI}}
In this section, we establish optimal error bounds in $L^2$-norm and $H^1$-norm
for the semi-discretization \eqref{EWI} of the NLSE \eqref{NLSE}.

\subsection{Main results}
For the optimal $ L^2 $-norm error bound, we assume that the nonlinearity is locally Lipschitz continuous, i.e., there exists a fixed function $ \Cl (\cdot): {\mathbb R}^+\to {\mathbb R}^+$ such that
\begin{equation}\label{A}
	| f(|z_1|^2)z_1 - f(|z_2|^2)z_2 | \leq \Cl(M_0) |z_1 - z_2|, \quad z_j \in \C, \  |z_j| \leq M_0, \quad j=1, 2. \tag{A}
\end{equation}
Assumption \cref{A} is satisfied by $ f \in C^1((0, \infty)) $ satisfying
	\begin{equation*}\label{A2}
		|f(\rho)| + |\rho f^\prime(\rho)| \leq L(M_0), \quad 0 < \rho \leq M_0
	\end{equation*}
	with $ \Cl(M_0) \sim L(M_0) $ for $ M_0 > 0 $. In particular, \cref{A} allows
	\begin{enumerate}
		\item[(i)] $ f(\rho) = \lambda_1 \rho^{\sigma_1} +\lambda_2 \rho^{\sigma_2} $ for any $ 0<\sigma_1<\sigma_2 $ and $\lambda_1,\lambda_2\in{\mathbb R}$ with $ \Cl(M_0) \sim |\lambda_1| M_0^{\sigma_1} +|\lambda_2| M_0^{\sigma_2} $; 
		\item[(ii)] $ f(\rho) = \lambda \rho^\sigma \ln \rho $ for any $ \sigma > 0 $
	and $\lambda\in {\mathbb R}$ with $ \Cl(M_0) \sim 1 + M_0^\sigma + M_0^\sigma |\ln M_0| $. 
	\end{enumerate}

For the optimal $ H^1 $-norm error bound, we assume
\begin{equation}\label{B}
	\|  f(|v|^2)v - f(|w|^2)w  \|_{H^1} \leq C(\| v \|_{H^3}, \| w \|_{H^2}) \| v-w \|_{H^1}, \  v \in H^3(\Omega), w \in H^2(\Omega). \tag{B}
\end{equation}
Assumption \cref{B} is satisfied by
	\begin{enumerate}
		\item[(i)] $ f(\rho) = \lambda_1\rho^{\sigma_1} +\lambda_2 \rho^{\sigma_2} $ for $ \sigma_2>\sigma_1 \geq 1/2 $ and $\lambda_1,\lambda_2\in{\mathbb R}$
		with $ C(\cdot,\cdot) $ depending on $ \| v \|_{H^3} $ and $ \| w \|_{H^2} $;
		\item[(ii)] $ f(\rho) = \lambda\rho^\sigma \ln \rho $ for any $ \sigma > 1/2 $
		and $\lambda\in{\mathbb R}$ with $ C(\cdot,\cdot) $ depending on $ \| v \|_{H^3} $ and $ \| w \|_{H^2} $. 
	\end{enumerate}
We remark here that \cref{B} implies \cref{A} by taking $ v(x) \equiv z_1 $ and $ w(x) \equiv z_2 $ in \cref{B}.
{Nonlinearity satisfying \cref{B} occurs in physical applications including Lee-Huang-Yang correction \cite{LHY,QD1,QD2,QD3,QD4} and Bose-Fermi mixture \cite{Had,Cai} in 1D, 2D and 3D, and Schrodinger-Poisson-X$\alpha$ model \cite{bao2003,SPXalpha} in 2D. Assumption \cref{A} covers, in addition to all those mentioned before, the case of Schrodinger-Poisson-X$\alpha$ model in 3D. }

Let $ T_\text{max} $ be the maximal existing time of the solution of the NLSE \cref{NLSE} and take $ 0 < T < T_\text{max} $ be a fixed time. Define
\begin{equation}\label{definM}
	M := \max\left\{\| \psi \|_{L^\infty([0, T]; H^2)}, \| \psi \|_{L^\infty([0, T]; L^\infty)}, \| \partial_t \psi \|_{L^\infty([0, T]; L^2)}, \| V \|_{L^\infty} \right\}.
\end{equation}

Let $ \psin{n} $ be the numerical approximation obtained by the EWI \cref{EWI}, then we have
\begin{theorem}\label{thm:error_estimates}
	Under the assumptions that $ V \in L^\infty(\Omega) $, $ f $ satisfies Assumption \cref{A} and the exact solution $ \psi \in C([0, T]; H_{\text{\rm per}}^2(\Omega)) \cap C^1([0, T]; L^2(\Omega)) $, there exists $ \tau_0>0 $ depending on $ M $ and $ T $ and sufficiently small such that for any $ 0 < \tau < \tau_0 $, we have $ \psin{n} \in H^2_\text{\rm per}(\Omega) $ for $ 0 \leq n \leq T/\tau $ and
	\begin{equation}\label{eq:semi_error_L2}
		\begin{aligned}
			&\| \psi(\cdot, t_n) - \psin{n}  \|_{L^2} \lesssim \tau, \quad \| \psin{n} \|_{H^2} \leq C(M), \\
			&\| \psi(\cdot, t_n) - \psin{n}  \|_{H^1} \lesssim \sqrt{\tau}, \qquad 0 \leq n \leq T/\tau.
		\end{aligned}
	\end{equation}
	Moreover, if $ V \in W^{1, 4}(\Omega) \cap H^1_\text{\rm per}(\Omega) $, $ f $ satisfies \cref{B} and $ \psi \in C([0, T]; H_\text{\rm per}^3(\Omega)) \cap C^1([0, T]; H^1(\Omega)) $, we have, for $ 0 < \tau < \tau_0 $,
	\begin{equation}\label{eq:semi_error_H1}
		\| \psi(\cdot, t_n) - \psin{n}  \|_{H^1} \lesssim \tau, \qquad 0 \leq n \leq T/\tau.
	\end{equation}
\end{theorem}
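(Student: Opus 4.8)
The plan is to run the classical local-error-plus-stability scheme, but with two twists forced by the low regularity: the $H^2$-bound of the numerical solution must be obtained \emph{without} any $H^2$-Lipschitz control of the nonlinearity, and the circular dependence of the local Lipschitz constant $\Cl(\cdot)$ on the a priori unknown size of $\psin{n}$ must be broken. Introduce the local truncation error $\eta^n := \psi(t_{n+1}) - \Phi^\tau(\psi(t_n))$. Using the identity $i\tau\vphi_1(i\tau\Delta) = \int_0^\tau e^{i(\tau-s)\Delta}\,\rmd s$ to subtract \cref{EWI} from \cref{eq:Duhamel}, and setting $W(s) := V\big(\psi(t_n+s)-\psi(t_n)\big) + \big(\F(\psi(t_n+s)) - \F(\psi(t_n))\big)$, one obtains $\eta^n = -i\int_0^\tau e^{i(\tau-s)\Delta}W(s)\,\rmd s$ together with the error recursion $\en{n+1} = \eta^n + e^{i\tau\Delta}\en{n} - i\tau\vphi_1(i\tau\Delta)\big(V\en{n} + (\F(\psi(t_n)) - \F(\psin{n}))\big)$, where $\en{n} := \psi(t_n) - \psin{n}$.

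First I would record the two local-error estimates. Since $W(0)=0$ and $\|\partial_s W(s)\|_{L^2}\le C(M)$ (from $\|\partial_t\psi\|_{L^2}\le M$, $V\in L^\infty$, and the Lipschitz bound \cref{A}), the $L^2$-isometry of $e^{i(\tau-s)\Delta}$ gives $\|\eta^n\|_{L^2}\lesssim\tau^2$. For the $H^2$-estimate I would integrate by parts in $s$ in the Fourier representation of $\hat\eta^n_l$: exploiting $\partial_s e^{-i(\tau-s)\mu_l^2} = i\mu_l^2 e^{-i(\tau-s)\mu_l^2}$ and $\hat W_l(0)=0$ produces the crucial factor $\mu_l^{-2}$, yielding $\mu_l^2|\hat\eta^n_l|\le |\hat W_l(\tau)| + \int_0^\tau|\hat W_l'(s)|\,\rmd s$ and hence $\|\eta^n\|_{H^2}\lesssim \|W(\tau)\|_{L^2} + \int_0^\tau\|\partial_s W(s)\|_{L^2}\,\rmd s \lesssim \tau$. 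This oscillation/smoothing mechanism is precisely what replaces the missing $H^2$-regularity of the $L^\infty$-potential. I would also note the companion fact, immediate from the multiplier bound displayed just before \cref{EWI}, that $i\tau\vphi_1(i\tau\Delta)$ maps $L^2\to H^2$ with a $\tau$-uniform norm.

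The heart of the proof is a simultaneous induction on $n$ for (i) $\|\en{n}\|_{L^2}\le C_1\tau$, (ii) $\|\psin{n}\|_{H^2}\le C_2$, and the derived bound (iii) $\|\psin{n}\|_{L^\infty}\le M+1$. Assuming these up to index $n$, (iii) fixes the local Lipschitz constant at the $\tau$-independent value $\Cl(M+1)$, so the $L^2$-recursion reads $\|\en{n+1}\|_{L^2}\le(1+C(M)\tau)\|\en{n}\|_{L^2} + \|\eta^n\|_{L^2}$ and discrete Gronwall yields (i). For (ii) I would estimate the error \emph{directly} in $H^2$: because $e^{i\tau\Delta}$ is an $H^2$-isometry while the entire potential-plus-nonlinearity increment is routed through the smoothing operator $i\tau\vphi_1(i\tau\Delta)$ and therefore measured only in $L^2$, the recursion is additive, $\|\en{n+1}\|_{H^2}\le\|\en{n}\|_{H^2} + \|\eta^n\|_{H^2} + C(M)\|\en{n}\|_{L^2}$, with no $H^2$-Lipschitz term. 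Summing the two first-order contributions over the $O(\tau^{-1})$ steps gives $\|\en{n}\|_{H^2}\le C(M)T$, hence (ii) with $C_2=M+C(M)T$. Finally (iii) is recovered at the new index from (i)--(ii) by interpolation, $\|\en{n+1}\|_{L^\infty}\lesssim\|\en{n+1}\|_{H^1}\lesssim\|\en{n+1}\|_{L^2}^{1/2}\|\en{n+1}\|_{H^2}^{1/2}\lesssim C(M,T)\tau^{1/2}$, which is $\le 1$ once $\tau<\tau_0(M,T)$.

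The main obstacle is precisely closing this induction. One cannot estimate $\en{n}$ in $H^2$ by an $H^2$-stability inequality (the semi-smooth nonlinearity admits none), and a naive one-step $H^2$-bound discards the smoothing and accumulates an $O(\tau^{-1})$ factor over $T/\tau$ steps. The two devices above—routing the nonlinear increment through $i\tau\vphi_1(i\tau\Delta)$ so only its $L^2$-size enters the additive $H^2$-recursion, and pinning $\|\psin{n}\|_{L^\infty}$ near $M$ through the interpolated $L^\infty$-error so that $\Cl$ never sees the growing $H^2$-bound—are what make the argument non-circular. The $H^1$-bound at $O(\sqrt\tau)$ then follows at once by interpolating (i) and (ii). For the optimal $H^1$-bound under \cref{B}, $V\in W^{1,4}$, and the $H^3$-solution (so $\partial_t\psi\in H^1$), I would upgrade the local error to $\|\eta^n\|_{H^1}\lesssim\tau^2$ by writing $W(s)=\int_0^s\partial_r W(r)\,\rmd r$ and bounding $\|\partial_r W\|_{H^1}\le C(M)$ via \cref{B} for the nonlinear part and via $\|V'\|_{L^4}$ together with the 1D embedding $H^1\hookrightarrow L^\infty$ for the potential part; the matching $H^1$-stability estimate uses the same two ingredients, and a final discrete Gronwall delivers $\|\en{n}\|_{H^1}\lesssim\tau$.
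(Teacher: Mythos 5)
Your proposal is correct and its core architecture coincides with the paper's: an optimal Duhamel-based local truncation error using only the $\partial_t$-regularity of $B(\psi)=V\psi+f(|\psi|^2)\psi$ (never $\Delta$ applied to the potential or nonlinearity), an $L^\infty$-conditional stability estimate resting on the $\tau$-uniform $L^2\to H^2$ mapping property of $\tau\vphi_1(i\tau\Delta)$, a simultaneous induction that pins $\|\psin{n}\|_{L^\infty}$ near $M$ so the Lipschitz constant from \cref{A} never sees the growing norms, interpolation for the $O(\sqrt{\tau})$ $H^1$-bound, and the $H^1$-Lipschitz property of $B$ (from \cref{B}, $V\in W^{1,4}$, and the uniform $H^2$-bound of $\psin{n}$) for the optimal $H^1$-rate. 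Two of your realizations differ in instructive ways. First, you obtain the $H^2$ local error by integration by parts in $s$ on the Fourier side, extracting the factor $\mu_l^{-2}$ from the oscillation $e^{-i(\tau-s)\mu_l^2}$; the paper's \cref{lem:H2_estimate} instead differentiates the Duhamel integral in time and reads $\|\Delta w\|_{L^2}$ off the equation $i\partial_t w=-\Delta w+g$. These are equivalent computations producing the same bound $\|g\|_{L^\infty L^2}+\|g(0)\|_{L^2}+\|\partial_t g\|_{L^1L^2}$. Second, and more substantively, your induction carries the triple ($L^2$-error $\lesssim\tau$, $H^2$-error bounded, $L^\infty$-bound $\le M+1$), recovering the $L^\infty$-control at the new index by interpolating the small $L^2$-error against the merely bounded $H^2$-error; the paper instead carries the pair of errors in $L^2$ and $H^{7/4}$, with the $H^{7/4}$-error small at rate $\tau^{1/8}$, gets $L^\infty$ from $H^{7/4}\hookrightarrow L^\infty$, and derives the $H^2$-bound only after the induction closes. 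Both closures rely on the same structural fact — the high-norm error recursion is additive (no Lipschitz amplification) because the nonlinear increment is routed through the smoothing operator and measured only in $L^2$ — and both are valid; yours avoids fractional Sobolev indices, while the paper's has the feature that the quantity controlling $L^\infty$ is itself small, not just bounded.

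Three small points to tighten. The identity you invoke should read $\tau\vphi_1(i\tau\Delta)=\int_0^\tau e^{i(\tau-s)\Delta}\,\rmd s$, without the extra factor $i$; your subsequent formulas use the correct version. The bound $\|\partial_s W\|_{L^\infty([0,\tau];L^2)}\le C(M)$ presupposes that the Lipschitz-in-time map $s\mapsto W(s)\in L^2$ is differentiable a.e.; since $L^2$ is reflexive this is standard (the paper cites Proposition 1.3.12 of \cite{cazenave2003}), but it deserves a sentence. Finally, your step $\|\en{n+1}\|_{L^\infty}\lesssim\|\en{n+1}\|_{H^1}$ uses $H^1\hookrightarrow L^\infty$, which holds only in 1D; that is consistent with the paper's 1D presentation, but to keep the argument dimension-robust for $d=2,3$ one should interpolate $L^\infty$ directly between $L^2$ and $H^2$ (e.g. $\|u\|_{L^\infty}\lesssim\|u\|_{L^2}^{1-d/4}\|u\|_{H^2}^{d/4}$), which still yields the needed smallness for $\tau<\tau_0(M,T)$.
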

{
\begin{remark}
	According to the known regularity results (see, e.g. Corollary 4.8.6 in \cite{cazenave2003}), under the assumptions that $V \in L^\infty(\Omega)$ and \cref{A}, it can be expected that $ \psi \in C([0, T]; H_{\text{\rm per}}^2(\Omega)) \cap C^1([0, T]; L^2(\Omega)) $ for some $0<T<T_\text{max}$ if $\psi_0 \in H^2_\text{per}(\Omega)$. 
\end{remark}
}
\begin{remark}
	Recall that, for the time-splitting methods analyzed in \cite{bao2023} with $ f(\rho) = \rho^\sigma $, the optimal $ L^2 $-norm error bound in time is obtained for $ V \in H^2(\Omega)$ and $\sigma \geq 1/2 $, and the optimal $ H^1 $-norm error bound in time is obtained for $ V \in H^3(\Omega)$ and $\sigma \geq 1 $. Hence, our results greatly relax the regularity requirements on both the potential and nonlinearity.
\end{remark}

In the following, we shall prove \cref{thm:error_estimates}. We start with the proof of \cref{eq:semi_error_L2}, and the proof of \cref{eq:semi_error_H1} can be obtained by the standard Lady Windermere's fan argument with the established uniform $ H^2 $-bound of the semi-discretization solution in \cref{eq:semi_error_L2}.

In the rest of this section, we assume that $ V \in L^\infty(\Omega) $, $ f $ satisfies Assumption \cref{A} and $ \psi \in C([0, T]; H_{\text{\rm per}}^2(\Omega)) \cap C^1([0, T]; L^2(\Omega)) $.

\subsection{Local truncation error}
We define an operator $ B:L^\infty(\Omega) \rightarrow L^\infty(\Omega) $ as
\begin{equation}\label{eq:B_def}
	B(v) = V v + f(|v|^2)v, \quad v \in L^\infty(\Omega).
\end{equation}
and define a constant $ \CL(\cdot):= \| V \|_{L^\infty} + \Cl(\cdot) $ with $ \Cl(\cdot) $ given by Assumption \cref{A}. For the operator $ B $, we have
\begin{lemma}\label{lem:diff_B}
	Let $ v, w \in L^\infty(\Omega) $ satisfying $ \| v \|_{L^\infty} \leq M_0 $ and $ \| w \|_{L^\infty} \leq M_0 $, then
	\begin{equation}
		\| B(v) - B(w) \|_{L^2} \leq \CL(M_0) \| v-w \|_{L^2}.
	\end{equation}
\end{lemma}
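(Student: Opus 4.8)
The plan is to prove the Lipschitz estimate for $B$ by splitting it into its linear potential part and its nonlinear part, then bounding each separately in $L^2$. By definition \cref{eq:B_def}, we have
\begin{equation*}
	B(v) - B(w) = V(v-w) + \left( f(|v|^2)v - f(|w|^2)w \right),
\end{equation*}
so by the triangle inequality it suffices to control the two terms on the right individually.

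First I would handle the linear term. Since $ V \in L^\infty(\Omega) $, a direct application of Hölder's inequality gives
\begin{equation*}
	\| V(v-w) \|_{L^2} \leq \| V \|_{L^\infty} \| v-w \|_{L^2}.
\end{equation*}
Next I would handle the nonlinear term using Assumption \cref{A}. The key observation is that \cref{A} is a \emph{pointwise} estimate: for almost every $ x \in \Omega $, the values $ z_1 = v(x) $ and $ z_2 = w(x) $ satisfy $ |z_j| \leq M_0 $ by the hypotheses $ \| v \|_{L^\infty}, \| w \|_{L^\infty} \leq M_0 $, so \cref{A} yields
\begin{equation*}
	\left| f(|v(x)|^2)v(x) - f(|w(x)|^2)w(x) \right| \leq \Cl(M_0) |v(x)-w(x)| \quad \text{for a.e. } x \in \Omega.
\end{equation*}
Squaring, integrating over $ \Omega $, and taking square roots then converts this pointwise bound into the $ L^2 $-estimate $ \| f(|v|^2)v - f(|w|^2)w \|_{L^2} \leq \Cl(M_0) \| v-w \|_{L^2} $. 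Combining the two bounds and using the definition $ \CL(M_0) = \| V \|_{L^\infty} + \Cl(M_0) $ completes the proof.

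This lemma is essentially routine, so there is no serious obstacle; the only point requiring mild care is the transition from the pointwise Lipschitz hypothesis \cref{A} to the integrated $ L^2 $-statement, where one must verify that the $ L^\infty $-bounds on $ v $ and $ w $ indeed hold almost everywhere so that \cref{A} is applicable pointwise before integrating.
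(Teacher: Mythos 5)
Your proposal is correct and follows essentially the same argument as the paper: split $B(v)-B(w)$ into the potential term and the nonlinear term, bound the former by $\| V \|_{L^\infty}\| v-w \|_{L^2}$ and the latter by applying Assumption \cref{A} pointwise and integrating, then combine via $\CL(M_0) = \| V \|_{L^\infty} + \Cl(M_0)$. The paper's proof is just a more compressed version of the same computation.
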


\begin{proof}
	Recalling \cref{eq:B_def,A}, we have
	\begin{equation*}
		\begin{aligned}
			\| B(v) - B(w) \|_{L^2}
			&= \| V(v-w) + f(|v|^2)v - f(|w|^2)w \|_{L^2} \\
			&\leq \| V \|_{L^\infty} \| v-w \|_{L^2} + \Cl(M_0) \| v - w \|_{L^2} \\
			&= \CL(M_0) \| v-w \|_{L^2},
		\end{aligned}
	\end{equation*}
	which completes the proof.
\end{proof}

\begin{lemma}\label{lem:est_gn}
	For $ 0 \leq n \leq T/\tau-1 $, define
	\begin{equation}\label{eq:gn_def}
		\begin{aligned}
			g_n(t)
			&:= B(\psi(t_n+t)) - B(\psi(t_n)), \qquad 0 \leq t \leq \tau.
		\end{aligned}
	\end{equation}
	Then $ g_n \in C([0, \tau]; L^2(\Omega)) \cap W^{1, \infty}([0, \tau]; L^2(\Omega)) $ satisfies
	\begin{align}
		&\| g_n \|_{L^\infty([0, \tau];L^2)} \leq \CL(M) M \tau, \label{eq:est_g_n} \\
		&\| \partial_t g_n \|_{L^\infty([0, \tau];L^2)} \leq \CL(M)M. \label{eq:est_partial_t_g_n}
	\end{align}
\end{lemma}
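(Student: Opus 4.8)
The plan is to reduce both estimates to differences of the exact solution $\psi$ measured in $L^2$, and then to control those differences using the $C^1([0,T];L^2)$-regularity of $\psi$ encoded in the constant $M$. The crucial design principle is that we never attempt to differentiate the semi-smooth nonlinearity $f$ directly; we only ever invoke the Lipschitz-type bound of \cref{lem:diff_B}. First I would observe that for $0 \leq n \leq T/\tau-1$ and $0 \leq t \leq \tau$ all time arguments $t_n$ and $t_n+t$ lie in $[0,T]$, so that $\| \psi(t_n) \|_{L^\infty} \leq M$ and $\| \psi(t_n+t) \|_{L^\infty} \leq M$ by the definition \cref{definM} of $M$. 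Hence \cref{lem:diff_B} applies with $M_0 = M$ to every difference of the form $B(\psi(s_1)) - B(\psi(s_2))$ arising below.

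For \cref{eq:est_g_n}, I would apply \cref{lem:diff_B} to $g_n(t) = B(\psi(t_n+t)) - B(\psi(t_n))$ to obtain $\| g_n(t) \|_{L^2} \leq \CL(M)\, \| \psi(t_n+t) - \psi(t_n) \|_{L^2}$, and then estimate the right-hand side by the fundamental theorem of calculus, writing $\psi(t_n+t) - \psi(t_n) = \int_0^t \partial_s \psi(t_n+s)\, \rmd s$, so that $\| \psi(t_n+t) - \psi(t_n) \|_{L^2} \leq t\, \| \partial_t \psi \|_{L^\infty([0,T];L^2)} \leq M \tau$. Combining the two bounds yields $\| g_n \|_{L^\infty([0,\tau];L^2)} \leq \CL(M) M \tau$, which is \cref{eq:est_g_n}.

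For the derivative estimate \cref{eq:est_partial_t_g_n}, I would establish that $g_n$ is Lipschitz continuous in time with values in $L^2$. Since $B(\psi(t_n))$ does not depend on $t$, for $t_1, t_2 \in [0, \tau]$ we have $g_n(t_1) - g_n(t_2) = B(\psi(t_n+t_1)) - B(\psi(t_n+t_2))$, and \cref{lem:diff_B} together with the $C^1$-in-time bound gives $\| g_n(t_1) - g_n(t_2) \|_{L^2} \leq \CL(M)\, \| \psi(t_n+t_1) - \psi(t_n+t_2) \|_{L^2} \leq \CL(M) M\, |t_1 - t_2|$. This immediately yields continuity (hence $g_n \in C([0,\tau];L^2)$) and, since $L^2$ is a Hilbert space, the standard characterization of Lipschitz maps into a reflexive Banach space shows $g_n \in W^{1,\infty}([0,\tau];L^2)$ with $\| \partial_t g_n \|_{L^\infty([0,\tau];L^2)}$ bounded by the Lipschitz constant $\CL(M) M$, establishing \cref{eq:est_partial_t_g_n}. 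The main obstacle, and the reason this indirect route is needed, is precisely that Assumption \cref{A} provides only Lipschitz continuity of $f(|z|^2)z$ rather than differentiability, so one cannot compute $\partial_t B(\psi(t_n+t))$ via the chain rule; passing through the Lipschitz-in-time estimate and the $W^{1,\infty}$-characterization circumvents this and transfers the single available $t$-derivative of $\psi$ (in $L^2$) onto $g_n$ without ever differentiating $f$.
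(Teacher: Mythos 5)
Your proposal is correct and follows essentially the same route as the paper's proof: reduce everything to $L^2$-differences of $\psi$ via \cref{lem:diff_B}, bound those by the $C^1([0,T];L^2)$-regularity encoded in $M$, and invoke the standard vector-valued Sobolev theory (Lipschitz maps into the reflexive space $L^2$ lie in $W^{1,\infty}$ with derivative bounded by the Lipschitz constant — the paper cites Proposition 1.3.12 of \cite{cazenave2003} for the same fact, stated with the slightly sharper pointwise bound $\|\partial_t g_n(t)\|_{L^2}\leq \CL(M)\|\partial_t\psi(t_n+t)\|_{L^2}$ a.e. before taking the supremum). The only cosmetic difference is that you pass to the uniform Lipschitz constant before applying the characterization, while the paper keeps the integral modulus $\int_s^t \CL(M)\|\partial_t\psi(t_n+\sigma)\|_{L^2}\,\rmd\sigma$; both yield identical conclusions.
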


\begin{proof}
	Using \cref{lem:diff_B}, we have, for $ 0 \leq s < t \leq \tau $,
	\begin{eqnarray}\label{eq:diff_gn}
		\| g_n(t) - g_n(s) \|_{L^2}
		&&= \|  B(\psi(t_n+t)) - B(\psi(t_n+s)) \|_{L^2} \nonumber\\
		&&\leq \CL(M) \| \psi(t_n+t) - \psi(t_n+s) \|_{L^2} \nonumber\\
		&&\leq \CL(M) \int_s^t \| \partial_t \psi(t_n+\sigma) \|_{L^2} \rmd \sigma.
	\end{eqnarray}
	From \eqref{eq:diff_gn}, recalling \cref{A}, one has $ g_n \in C([0, \tau]; L^2(\Omega)) $, and, by using \cref{lem:diff_B} again, one has
	\begin{equation*}
		\begin{aligned}
			\| g_n(t) \|_{L^2}
			&=\| B(\psi(t_n+t)) - B(\psi(t_n)) \|_{L^2} \leq \CL(M) \| \psi(t_n+t) - \psi(t_n) \|_{L^2} \\
			&\leq \tau \CL(M) \| \partial_t \psi \|_{L^\infty([t_n, t_n+\tau]; L^2)} \leq \CL(M) M \tau,
		\end{aligned}
	\end{equation*}	
	which proves \cref{eq:est_g_n}. Noting \eqref{eq:diff_gn}, from the standard theory of Sobolev spaces (see, e.g., Proposition 1.3.12 in \cite{cazenave2003}), we have $ g_n \in W^{1, \infty}([0, T]; L^2(\Omega)) $ and, by letting $ \varphi(\sigma) = \CL(M) \| \partial_t \psi(t_n+\sigma) \|_{L^2} $ for $ 0 \leq \sigma \leq \tau $,
	\begin{equation*}
		\| \partial_t g_n \|_{L^\infty([0, \tau]; L^2)} \leq \| \varphi \|_{L^\infty([0, \tau])} \leq \CL(M) M,
	\end{equation*}
	which concludes the proof.
\end{proof}

Similar to Lemma 4.8.5 in \cite{cazenave2003}, we have
\begin{lemma}\label{lem:H2_estimate}
	Assume $ \tau > 0 $ and $ g \in C([0, \tau]; L^2(\Omega)) \cap W^{1, 1}([0, \tau]; L^2(\Omega)) $. If
	\begin{equation}\label{eq:w_def}
		w(t) = -i \int_0^t e^{i(t-s)\Delta} g(s) \rmd s, \quad t \in [0, \tau],
	\end{equation}
	then we have
	\begin{equation}\label{eq:est_w}
		\| \Delta w \|_{L^\infty([0, \tau]; L^2)} \leq \| g \|_{L^\infty([0, \tau]; L^2)} + \| g(0) \|_{L^2} + \| \partial_t g \|_{L^1([0, \tau]; L^2)}.
	\end{equation}
\end{lemma}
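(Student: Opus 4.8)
The plan is to reduce the estimate on $\Delta w$ to the $L^2$-unitarity of the Schr\"odinger group by producing a closed-form expression for $\Delta w$ in which \emph{no} unbounded operator hits $g$. The guiding observation is that $w$ defined in \cref{eq:w_def} is the Duhamel solution of $i\partial_t w = -\Delta w + g$ with $w(0)=0$, so that $\Delta w = g - i\partial_t w$. Hence the entire difficulty is to control $\partial_t w$ in $L^2$, and the mechanism is an integration by parts in the time variable that trades the factor $\mu_l^2$ arising from $\Delta$ against a time derivative of $g$. This is the quantitative form of the smoothing effect: one order of $\Delta$ is recovered at the cost of one time derivative on the inhomogeneity plus a boundary term at $s=0$.

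Concretely, I would pass to the Fourier side, writing $\widehat{w}_l(t) = -i\int_0^t e^{-i(t-s)\mu_l^2}\,\widehat{g}_l(s)\,\rmd s$. For each $l$, the hypothesis $g\in W^{1,1}([0,\tau];L^2)$ forces $\widehat{g}_l$ to be absolutely continuous on $[0,\tau]$, so integration by parts in $s$ is legitimate; for $l\neq0$ it produces a factor $1/(i\mu_l^2)$ that cancels exactly the $\mu_l^2$ in $\widehat{(\Delta w)}_l = -\mu_l^2\,\widehat{w}_l$. The outcome is the frequency-wise identity
\[
\widehat{(\Delta w)}_l(t) = \widehat{g}_l(t) - e^{-it\mu_l^2}\,\widehat{g}_l(0) - \int_0^t e^{-i(t-s)\mu_l^2}\,\partial_t\widehat{g}_l(s)\,\rmd s ,
\]
and a direct check shows the right-hand side vanishes for $l=0$ (where it reads $\widehat{g}_0(t)-\widehat{g}_0(0)-\int_0^t\partial_t\widehat{g}_0(s)\,\rmd s = 0$), matching $\widehat{(\Delta w)}_0=0$. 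Thus the formula holds for all $l$, and summing back gives the operator identity
\[
\Delta w(t) = g(t) - e^{it\Delta} g(0) - \int_0^t e^{i(t-s)\Delta}\,\partial_t g(s)\,\rmd s .
\]
From here the estimate is immediate: since $e^{it\Delta}$ is an isometry on $L^2$, the triangle inequality together with Minkowski's integral inequality gives $\|\Delta w(t)\|_{L^2}\le \|g(t)\|_{L^2} + \|g(0)\|_{L^2} + \int_0^t\|\partial_t g(s)\|_{L^2}\,\rmd s$, and taking the supremum over $t\in[0,\tau]$ yields \cref{eq:est_w}.

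The main obstacle is not the algebra but the low-regularity bookkeeping: one must rigorously justify the per-mode integration by parts and, more delicately, the passage from the frequency-wise identity to an $L^2(\Omega)$-valued identity when $g$ is merely in $C\cap W^{1,1}$ rather than smooth. I would handle this by first proving the bound for a finite band of frequencies, where every manipulation is elementary and the truncated quantity $\Delta P_N w$ is manifestly well defined; this yields an estimate on $\|\Delta P_N w\|_{L^2}$ that is uniform in $N$, after which letting $N\to\infty$ produces both $\Delta w\in L^2$ and the claimed inequality, the $L^1$-in-time control of $\partial_t g$ guaranteeing the limiting right-hand side is finite. A fully equivalent alternative is a density argument: establish the displayed identity for smooth $g$ and then approximate a general $g$ in $W^{1,1}([0,\tau];L^2)$, using the continuity of all three terms in that norm.
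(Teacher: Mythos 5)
Your proposal is correct and follows essentially the same route as the paper: both arguments reduce the bound to the identity $\Delta w(t) = g(t) - e^{it\Delta}g(0) - \int_0^t e^{i(t-s)\Delta}\,\partial_t g(s)\,\rmd s$ (equivalently, to controlling $\partial_t w$ and using $i\partial_t w = -\Delta w + g$) and then conclude by the $L^2$-isometry of $e^{it\Delta}$ together with the triangle and Minkowski inequalities. The only difference is how that identity is justified: the paper differentiates the Duhamel convolution in $t$ after the change of variables $s \mapsto t-s$, which the hypothesis $g \in W^{1,1}([0,\tau];L^2(\Omega))$ legitimizes directly, whereas you perform the dual manipulation --- a mode-by-mode integration by parts in $s$ on the Fourier side, followed by a frequency-truncation/density argument to resum --- which is sound but somewhat more laborious than the paper's two-line computation.
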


\begin{proof}
	Taking the time derivative on both sides of \cref{eq:w_def} and noting that $ g \in W^{1, 1}([0, \tau]; L^2(\Omega)) $, we have for $ 0 \leq t \leq \tau $
	\begin{eqnarray}\label{eq:partialw}
		\partial_t w(t) = - i \frac{d}{dt} \int_0^t e^{is\Delta} g(t-s) \rmd s
		&&= - i  e^{it\Delta} g(0) - i \int_0^t e^{is\Delta} \partial_t g(t-s) \rmd s \nonumber\\
		&&= - i  e^{it\Delta} g(0) - i \int_0^t e^{i(t-s)\Delta} \partial_t g(s) \rmd s.
	\end{eqnarray}
	From \eqref{eq:partialw}, using the isometry property of $ e^{i t \Delta} $, we have
	\begin{equation}\label{eq:partialw_est}
		\| \partial_t w(t) \|_{L^2} \leq \| g(0) \|_{L^2} + \| \partial_t g \|_{L^1([0, \tau]; L^2)}, \quad 0 \leq t \leq \tau.
	\end{equation}
	Note that $ w $ defined in \cref{eq:w_def} satisfies the equation
	\begin{equation*}
		i \partial_t w = - \Delta w + g, \quad 0 \leq t \leq \tau,
	\end{equation*}
	which implies, by using \cref{eq:partialw_est},
	\begin{equation}
		\| \Delta w(t) \|_{L^2} \leq \| \partial_t w(t) \|_{L^2} + \| g(t) \|_{L^2} \leq \| g(0) \|_{L^2} + \| \partial_t g \|_{L^1([0, \tau]; L^2)} + \| g(t) \|_{L^2},
	\end{equation}
	and the conclusion follows from taking supremum of $ t $ on both sides.
\end{proof}

Now we can obtain the following local truncation error estimates.
\begin{proposition}[local truncation error]\label{prop:local_eror}
	For $ 0 \leq n \leq T/\tau -1 $, we have
	\begin{equation}\label{eq:local_error}
		\| \psi(t_{n+1}) - \Phi^\tau(\psi(t_n)) \|_{H^{\alpha}} \leq C(M) \tau^{2-\alpha/2}, \quad 0 \leq \alpha \leq 2,
	\end{equation}
	where $ C(M) \sim \CL (M)M $.
\end{proposition}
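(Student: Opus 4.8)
The plan is to rewrite the local error as a single Duhamel-type integral of the increment $g_n$ and then interpolate between its $L^2$- and $H^2$-bounds. The crucial observation is the exact operator identity
\begin{equation*}
	\tau\vphi_1(i\tau\Delta) = \int_0^\tau e^{i(\tau-s)\Delta}\,\rmd s,
\end{equation*}
which I would verify mode by mode in Fourier space from \cref{eq:phi1_def}: on the $l$-th mode the left side has symbol $\tau\vphi_1(-i\tau\mu_l^2) = (1-e^{-i\tau\mu_l^2})/(i\mu_l^2)$, while integrating the symbol $e^{-i(\tau-s)\mu_l^2}$ of the right side over $s\in[0,\tau]$ yields the same quantity. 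Substituting this into the scheme \cref{EWI} evaluated at $\psi(t_n)$ and subtracting from the Duhamel representation \cref{eq:Duhamel}, the linear flow $e^{i\tau\Delta}\psi(t_n)$ cancels and the frozen term $B(\psi(t_n))$ recombines with the exact integrand, leaving
\begin{equation*}
	\psi(t_{n+1}) - \Phi^\tau(\psi(t_n)) = -i\int_0^\tau e^{i(\tau-s)\Delta}\bigl[B(\psi(t_n+s)) - B(\psi(t_n))\bigr]\,\rmd s = -i\int_0^\tau e^{i(\tau-s)\Delta}g_n(s)\,\rmd s,
\end{equation*}
with $g_n$ from \cref{eq:gn_def}. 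This is exactly $w(\tau)$ for the function $w$ of \cref{eq:w_def} with $g=g_n$, so the prepared lemmas apply directly.

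Next I would establish the two endpoint bounds. For $\alpha=0$, the $L^2$-isometry of $e^{i(\tau-s)\Delta}$ and the first estimate of \cref{lem:est_gn} give
\begin{equation*}
	\| w(\tau) \|_{L^2} \leq \int_0^\tau \| g_n(s) \|_{L^2}\,\rmd s \leq \tau\,\| g_n \|_{L^\infty([0,\tau];L^2)} \leq \CL(M)M\tau^2.
\end{equation*}
For $\alpha=2$ I would apply \cref{lem:H2_estimate}, noting that $g_n(0)=B(\psi(t_n))-B(\psi(t_n))=0$ so the middle term of \cref{eq:est_w} drops out; together with $\| \partial_t g_n \|_{L^1([0,\tau];L^2)} \leq \tau\,\| \partial_t g_n \|_{L^\infty([0,\tau];L^2)}$ and both estimates of \cref{lem:est_gn},
\begin{equation*}
	\| \Delta w(\tau) \|_{L^2} \leq \| g_n \|_{L^\infty([0,\tau];L^2)} + \tau\,\| \partial_t g_n \|_{L^\infty([0,\tau];L^2)} \leq 2\,\CL(M)M\tau.
\end{equation*}
Since on the periodic torus $\|\cdot\|_{H^2}$ is equivalent to $\|\cdot\|_{L^2}+\|\Delta\cdot\|_{L^2}$, the two displays combine into $\| w(\tau) \|_{H^2}\leq C(M)\tau$ with $C(M)\sim\CL(M)M$.

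Finally I would interpolate: the Sobolev interpolation inequality $\| w \|_{H^\alpha}\leq\| w \|_{L^2}^{1-\alpha/2}\| w \|_{H^2}^{\alpha/2}$ for $0\leq\alpha\leq2$ merges the endpoints into
\begin{equation*}
	\| \psi(t_{n+1})-\Phi^\tau(\psi(t_n)) \|_{H^\alpha} \leq \bigl(C(M)\tau^2\bigr)^{1-\alpha/2}\bigl(C(M)\tau\bigr)^{\alpha/2} = C(M)\tau^{2-\alpha/2},
\end{equation*}
which is exactly \cref{eq:local_error}. I expect the only genuine subtlety to be the verification of the $\tau\vphi_1(i\tau\Delta)$ identity and the bookkeeping that turns the difference of the scheme and Duhamel's formula into a clean integral of $g_n$; once the problem is reduced to $w(\tau)$, everything is driven by the already-established properties of $g_n$ in \cref{lem:est_gn}. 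In particular, it is the vanishing of $g_n(0)$ that removes one term in \cref{eq:est_w} and thereby upgrades the naive rate $\tau^{1-\alpha/2}$ to the optimal $\tau^{2-\alpha/2}$.
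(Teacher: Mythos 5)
Your proposal is correct and follows essentially the same route as the paper's proof: the same reduction to $-i\int_0^\tau e^{i(\tau-s)\Delta}g_n(s)\,\rmd s$ via the identity $\tau\vphi_1(i\tau\Delta)=\int_0^\tau e^{i(\tau-s)\Delta}\rmd s$, the same endpoint bounds from \cref{lem:est_gn} and \cref{lem:H2_estimate} (using $g_n(0)=0$), and the same interpolation to cover $0<\alpha<2$. The only cosmetic difference is that you verify the $\vphi_1$ identity mode by mode, whereas the paper takes it as immediate from the construction of the scheme \cref{EWI}.
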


\begin{proof}
	Recalling \eqref{eq:Duhamel} and \eqref{eq:B_def}, we have
	\begin{equation}\label{eq:duhamel}
		\psi(t_{n+1}) = e^{i \tau \Delta} \psi(t_n) - i \int_0^\tau e^{i(\tau - s) \Delta}B(\psi(t_n+s))\rmd s, \quad 0 \leq n \leq T/\tau -1.
	\end{equation}
	By the construction of the EWI \cref{EWI} and \cref{eq:B_def}, we have
	\begin{equation}\label{eq:EWI}
		\Phi^\tau(\psi(t_n)) = e^{i \tau \Delta} \psi(t_n) - i \int_0^\tau e^{i(\tau - s)\Delta} B(\psi(t_n)) \rmd s, \quad 0 \leq n \leq T/\tau -1.
	\end{equation}
	Subtracting \cref{eq:EWI} from \cref{eq:duhamel} and recalling \cref{eq:gn_def}, we have
	\begin{eqnarray}\label{eq:local_error_integral}
		\psi(t_{n+1}) - \Phi^\tau(\psi(t_n))
		&&= - i \int_0^\tau e^{i(\tau - s) \Delta} (B(\psi(t_n+s)) - B(\psi(t_n)))\rmd s \nonumber\\
		&&= - i \int_0^\tau e^{i(\tau - s) \Delta} g_n(s) \rmd s, \quad 0 \leq n \leq T/\tau -1.
	\end{eqnarray}
	From \eqref{eq:local_error_integral}, using \cref{eq:est_g_n}, one gets
	\begin{equation}\label{eq:local_error_L2}
		\| \psi(t_{n+1}) - \Phi^\tau(\psi(t_n)) \|_{L^2} \leq \int_0^\tau \| g_n(s) \|_{L^2} \rmd s \leq \CL(M) M \tau^2,
	\end{equation}
	which proves \cref{eq:local_error} for $ \alpha=0 $. Then we shall establish \cref{eq:local_error} with $ \alpha=2 $, and \cref{eq:local_error} with $ 0 < \alpha< 2 $ will follow from the Gagliardo-Nirenberg interpolation inequalities. Applying \cref{lem:H2_estimate} to \eqref{eq:local_error_integral}, using \cref{eq:est_partial_t_g_n} and noting $ g_n(0) = 0 $, we have
	\begin{eqnarray}
			&&\| \Delta(\psi(t_{n+1}) - \Phi^\tau(\psi(t_n))) \|_{L^2} \nonumber\\
			&&\leq \| g_n \|_{L^\infty([0, \tau]; L^2)} + \| g_n(0) \|_{L^2} + \| \partial_t g_n \|_{L^1([0, \tau]; L^2)} \nonumber\\
			&&\leq \CL(M)M \tau + \tau \| \partial_t g_n \|_{L^\infty([0, \tau]; L^2)} \leq 2\CL(M)M \tau,
	\end{eqnarray}
	which combined with \cref{eq:local_error_L2} implies
	\begin{equation}\label{eq:local_error_H2}
		\| \psi(t_{n+1}) - \Phi^\tau(\psi(t_n)) \|_{H^2} \leq C(M) \tau, \quad 0 \leq n \leq T/\tau -1,
	\end{equation}
	where $ C(M) \sim \CL (M)M $. The conclusion follows from \cref{eq:local_error_L2,eq:local_error_H2} and the Gagliardo-Nirenberg interpolation inequalities.
\end{proof}

\begin{remark}
	In \cref{eq:local_error_L2}, the optimal local truncation error in $ L^2 $-norm is obtained with the boundedness of $ \| \partial_t B(\psi(t)) \|_{L^2} $ (recalling \cref{lem:est_gn}) instead of $ \| \Delta B(\psi(t)) \|_{L^2} $ in the time-splitting methods \cite{bao2023}.
\end{remark}

\subsection{$ L^\infty $-conditional stability estimate of \eqref{EWI}}
Then we shall establish the $ L^\infty $-conditional stability estimate of the numerical flow \eqref{EWI}. The key is the following lemma, which can be understood as the smoothing effect of the operator $ \varphi_1(i \tau \Delta) $, which is another major advantage of the EWI \cref{EWI}.
\begin{lemma}\label{lem:smoothingofphi1}
	Let $ v, w \in L^2(\Omega) $ and $ 0 < \tau < 1 $. Then we have
	\begin{equation*}
		\| \vphi_1(i\tau\Delta) v - \vphi_1(i\tau\Delta) w \|_{H^{\alpha}} \leq C(\alpha)\tau^{-\alpha/2} \| v - w \|_{L^2}, \quad 0 \leq \alpha \leq 2,
	\end{equation*}
	where $ C(\alpha) = 2^\frac{\alpha}{2}(1+\mu_1^{-2})^\frac{\alpha}{2} $.
\end{lemma}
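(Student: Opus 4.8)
The plan is to exploit that $\vphi_1(i\tau\Delta)$ is a Fourier multiplier, hence diagonal in the basis $\{e^{i\mu_l(x-a)}\}_{l\in\Z}$. By linearity it suffices to treat a single function: setting $u := v - w \in L^2(\Omega)$, the claim reduces to showing $\| \vphi_1(i\tau\Delta) u \|_{H^\alpha} \leq C(\alpha)\tau^{-\alpha/2}\|u\|_{L^2}$. Using the Fourier characterization $\|\phi\|_{H^\alpha}^2 = (b-a)\sum_{l\in\Z}(1+\mu_l^2)^\alpha|\widehat\phi_l|^2$ (so that $\|\phi\|_{L^2}^2 = (b-a)\sum_{l\in\Z}|\widehat\phi_l|^2$ by Parseval), and recalling from \cref{eq:phi1_def} that the $l$-th Fourier coefficient of $\vphi_1(i\tau\Delta)u$ equals $\vphi_1(-i\tau\mu_l^2)\widehat u_l$, everything collapses to a pointwise bound on the symbol
\begin{equation*}
	m_l := (1+\mu_l^2)^{\alpha/2}\,\bigl|\vphi_1(-i\tau\mu_l^2)\bigr|, \qquad l\in\Z,
\end{equation*}
namely $m_l \leq C(\alpha)\tau^{-\alpha/2}$ for every $l$. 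Once this is established the conclusion follows by summing against $(1+\mu_l^2)^0|\widehat u_l|^2$ and invoking Parseval once more.

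First I would record two elementary bounds on the scalar symbol. Writing $\vphi_1(-i\theta) = (e^{-i\theta}-1)/(-i\theta)$ and using $|e^{-i\theta}-1| = 2|\sin(\theta/2)| \leq \min(\theta, 2)$ for $\theta \geq 0$, one obtains
\begin{equation*}
	\bigl|\vphi_1(-i\theta)\bigr| \leq \min\!\left(1,\ \frac{2}{\theta}\right), \qquad \theta > 0,
\end{equation*}
together with $\vphi_1(0) = 1$. Applied with $\theta = \tau\mu_l^2$, this gives both the uniform bound $|\vphi_1(-i\tau\mu_l^2)| \leq 1$ and, for $l \neq 0$, the decay bound $|\vphi_1(-i\tau\mu_l^2)| \leq 2/(\tau\mu_l^2)$ (consistent with the coefficient estimate already stated after \cref{eq:phi1_def}).

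The key step is to interpolate between these two bounds in exactly the proportion dictated by $\alpha$. For $l \neq 0$ and $0 \leq \alpha \leq 2$ I would split
\begin{equation*}
	\bigl|\vphi_1(-i\tau\mu_l^2)\bigr| = \bigl|\vphi_1(-i\tau\mu_l^2)\bigr|^{1-\alpha/2}\,\bigl|\vphi_1(-i\tau\mu_l^2)\bigr|^{\alpha/2} \leq 1^{1-\alpha/2}\left(\frac{2}{\tau\mu_l^2}\right)^{\alpha/2},
\end{equation*}
which is legitimate precisely because both exponents $1-\alpha/2$ and $\alpha/2$ are nonnegative on $[0,2]$; this is exactly what forces the stated range of $\alpha$. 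Multiplying by $(1+\mu_l^2)^{\alpha/2}$ and factoring out $\tau^{-\alpha/2}$ yields
\begin{equation*}
	m_l \leq 2^{\alpha/2}\tau^{-\alpha/2}\left(\frac{1+\mu_l^2}{\mu_l^2}\right)^{\alpha/2} = 2^{\alpha/2}\tau^{-\alpha/2}\bigl(1+\mu_l^{-2}\bigr)^{\alpha/2},
\end{equation*}
and since $\mu_l^2$ is increasing in $|l|$ we have $\mu_l^{-2} \leq \mu_1^{-2}$ for all $|l| \geq 1$, so $m_l \leq 2^{\alpha/2}(1+\mu_1^{-2})^{\alpha/2}\tau^{-\alpha/2} = C(\alpha)\tau^{-\alpha/2}$. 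The mode $l=0$ is handled trivially: there $m_0 = 1 \leq C(\alpha)\tau^{-\alpha/2}$ since $\tau < 1$ and $C(\alpha) \geq 1$. I do not anticipate a genuine obstacle, as the estimate is elementary once the two-sided control $\min(1,2/\theta)$ on the symbol is combined in the right proportion; the only point requiring care is keeping the interpolation exponents nonnegative, which simultaneously pins down both the admissible range $0 \leq \alpha \leq 2$ and the exact constant $C(\alpha) = 2^{\alpha/2}(1+\mu_1^{-2})^{\alpha/2}$.
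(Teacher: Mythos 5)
Your proof is correct and follows essentially the same route as the paper: reduce by linearity to a single function, pass to Fourier coefficients via Parseval, and bound the symbol by interpolating between the uniform bound and the decay bound (your splitting $|\vphi_1|^{1-\alpha/2}|\vphi_1|^{\alpha/2} \leq (2/\theta)^{\alpha/2}$ is exactly the paper's inequality $|e^{i\theta}-1| \leq 2^{\gamma}\theta^{1-\gamma}$ with $\gamma = \alpha/2$, just stated for $\vphi_1$ instead of $e^{i\theta}-1$). The treatment of the $l=0$ mode via $\tau<1$ and $C(\alpha)\geq 1$, and the bound $(1+\mu_l^2)/\mu_l^2 \leq 1+\mu_1^{-2}$, also coincide with the paper's argument.
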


\begin{proof}
	It suffices to show that for any $ v \in L^2(\Omega) $,
	\begin{equation}\label{eq:smoothingofphi1}
		\| \vphi_1(i\tau\Delta) v \|_{H^{\alpha}} \leq C(\alpha) \tau^{-\alpha/2} \| v \|_{L^2}, \quad 0 \leq \alpha \leq 2.
	\end{equation}
	Note that
	\begin{equation}\label{eq:expm1}
		| e^{i \theta} - 1 | \leq 2^{\gamma} \theta^{1-\gamma}, \quad \theta \in \R, \quad 0 \leq \gamma \leq 1.
	\end{equation}
	By Parseval's identity, using \cref{eq:expm1} with $ \gamma = \alpha/2 $ and recalling \eqref{eq:phi1_def}, we have
	\begin{equation*}
		\begin{aligned}
			\frac{1}{b-a}\| \vphi_1(i\tau\Delta) v \|_{H^{\alpha}}^2
			&= \sum_{l\in\Z} (1+\mu_l^2)^{\alpha} |\vphi_1(-i\tau \mu_l^2)|^2 | \widehat v_l |^2 \\
			&= | \widehat v_0 |^2 + \sum_{l\in\Z\setminus\{0\}} (1+\mu_l^2)^{\alpha} \left| \frac{e^{i\tau\mu_l^2} - 1}{\tau\mu_l^2} \right|^2 |\widehat v_l|^2 \\
			&\leq | \widehat v_0 |^2 + 2^{\alpha} \sum_{l\in\Z\setminus\{0\}} (1+\mu_l^2)^{\alpha} \left( \tau\mu_l^2 \right)^{-\alpha} |\widehat v_l|^2 \\
			&= | \widehat v_0 |^2 + 2^{\alpha} \tau^{-\alpha} \sum_{l\in\Z\setminus\{0\}} \left(\frac{1+\mu_l^2}{\mu_l^2}\right)^{\alpha}  |\widehat v_l|^2 \\
			&\leq | \widehat v_0 |^2 + C(\alpha)^2 \tau^{-\alpha} \sum_{l\in\Z\setminus\{0\}} |\widehat v_l|^2 \\
			&\leq C(\alpha)^2\tau^{-\alpha} \sum_{l\in\Z} |\widehat v_l|^2 = C(\alpha)^2 \tau^{-\alpha} \frac{1}{b-a} \| v \|_{L^2}^2,
		\end{aligned}
	\end{equation*}
	which proves \cref{eq:smoothingofphi1} and concludes the proof.
\end{proof}

With \cref{lem:smoothingofphi1}, we are able to obtain the stability estimate of the numerical flow \eqref{EWI} up to $ H^2 $ without additional regularity on the potential and nonlinearity.
\begin{proposition}[stability estimate]\label{prop:stability}
	Let $ v, w \in H_\text{\rm per}^2(\Omega) $ such that $ \| v \|_{L^\infty} \leq M_0 $ and $ \| w \|_{L^\infty} \leq M_0 $ and let $ 0 < \tau < 1 $. Then we have, for $ 0 \leq \alpha \leq 2 $,
	\begin{equation*}
		\| \Phi^\tau(v) - \Phi^\tau(w) \|_{H^\alpha} \leq \| v - w \|_{H^\alpha} + C(M_0) \tau^{1-\alpha/2} \| v - w \|_{L^2}.
	\end{equation*}
\end{proposition}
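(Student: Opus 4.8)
The plan is to reduce the proposition to the two preceding lemmas by writing out the numerical flow explicitly and splitting the difference into a free-propagation part and a nonlinear part. Recalling the definition of $\Phi^\tau$ in \cref{EWI} and of $B$ in \cref{eq:B_def}, I would first observe that
\begin{equation*}
	\Phi^\tau(v) - \Phi^\tau(w) = e^{i\tau\Delta}(v - w) - i\tau\, \vphi_1(i\tau\Delta)\bigl(B(v) - B(w)\bigr),
\end{equation*}
so that the triangle inequality gives
\begin{equation*}
	\| \Phi^\tau(v) - \Phi^\tau(w) \|_{H^\alpha} \leq \| e^{i\tau\Delta}(v-w) \|_{H^\alpha} + \tau\, \| \vphi_1(i\tau\Delta)(B(v) - B(w)) \|_{H^\alpha}.
\end{equation*}

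For the first term, I would use that the free Schr\"odinger propagator $e^{i\tau\Delta}$ acts on the Fourier side merely by multiplication with the unimodular factors $e^{-i\tau\mu_l^2}$, hence it is an isometry not only on $L^2$ but on the whole scale $H^\alpha$; this yields $\| e^{i\tau\Delta}(v-w) \|_{H^\alpha} = \| v-w \|_{H^\alpha}$, which is exactly the first term on the right-hand side of the claimed bound.

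For the second (nonlinear) term the work has already been front-loaded into the two lemmas. I would apply \cref{lem:smoothingofphi1} to the $L^2$-function $B(v) - B(w)$ (which lies in $L^2$ since $v, w$ are bounded by $M_0$ in $L^\infty$, so that \cref{lem:diff_B} applies) to gain the factor $\tau^{-\alpha/2}$, and then \cref{lem:diff_B} to control $\| B(v) - B(w) \|_{L^2}$ by $\CL(M_0)\| v-w \|_{L^2}$. Combining,
\begin{equation*}
	\tau\, \| \vphi_1(i\tau\Delta)(B(v) - B(w)) \|_{H^\alpha} \leq \tau \cdot C(\alpha)\tau^{-\alpha/2}\, \CL(M_0)\| v-w \|_{L^2} = C(M_0)\, \tau^{1-\alpha/2} \| v-w \|_{L^2},
\end{equation*}
where I set $C(M_0) := C(\alpha)\CL(M_0)$ with $C(\alpha)$ from \cref{lem:smoothingofphi1}, and where $1 - \alpha/2 \geq 0$ for $0 \leq \alpha \leq 2$ keeps the power of $\tau$ non-negative. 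Adding the two contributions gives the assertion.

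I do not anticipate a genuine obstacle: the proposition is essentially an assembly of \cref{lem:smoothingofphi1,lem:diff_B}, and the only points requiring a moment's care are recognizing that $e^{i\tau\Delta}$ is an isometry on the full $H^\alpha$ scale (not merely on $L^2$), and arranging the estimate so that the smoothing operator $\vphi_1(i\tau\Delta)$ supplies the $H^\alpha$ regularity while the nonlinear difference $B(v)-B(w)$ is only ever measured in $L^2$. It is precisely this device---keeping $B(v)-B(w)$ in $L^2$ rather than attempting to bound $\|B(v)-B(w)\|_{H^\alpha}$ directly---that lets us dispense with any extra regularity on $V$ or $f$ and obtain the stability estimate all the way up to $\alpha = 2$.
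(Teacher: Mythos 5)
Your proposal is correct and follows essentially the same route as the paper's own proof: the same decomposition of $\Phi^\tau(v)-\Phi^\tau(w)$ into the free flow applied to $v-w$ plus the $\vphi_1$-term applied to $B(v)-B(w)$, the isometry of $e^{i\tau\Delta}$ on $H^\alpha$, and then \cref{lem:smoothingofphi1} followed by \cref{lem:diff_B}. The only cosmetic difference is that the paper makes the constant independent of $\alpha$ by taking $C(M_0)=C(\alpha)\CL(M_0)$ with $\alpha=2$ (the worst case on $[0,2]$), whereas you leave $C(\alpha)$ in place, which is immaterial.
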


\begin{proof}
	Recalling \cref{EWI} and \cref{eq:B_def}, we have
	\begin{equation}\label{eq:Phi}
		\Phi^\tau(u) = e^{i \tau \Delta} u - i \tau \vphi_1(i \tau \Delta) B(u), \qquad u \in H^2_\text{per}(\Omega).
	\end{equation}
	Taking $ u = v $ and $ u = w $ in \cref{eq:Phi}, subtracting one from the other and using the isometry property of $ e^{it\Delta} $, \cref{lem:smoothingofphi1} and \cref{lem:diff_B}, we have
	\begin{align*}
		\| \Phi^\tau(v) - \Phi^\tau(w) \|_{H^\alpha}
		&\leq \| e^{i \tau \Delta} v - e^{i \tau \Delta} w \|_{H^\alpha} + \tau \| \vphi_1(i \tau \Delta) (B(v) - B(w)) \|_{H^\alpha} \\
		&\leq \| v - w \|_{H^\alpha} + C(\alpha) \tau^{1-\alpha/2} \| B(v) - B(w) \|_{L^2} \\
		&\leq \| v - w \|_{H^\alpha} + C(\alpha) \tau^{1-\alpha/2} \CL(M_0) \| v - w \|_{L^2}.
	\end{align*}
	The conclusion follows from letting $ C(M_0) = C(\alpha) \CL(M_0) $ with $ \alpha=2 $.
\end{proof}

\subsection{Proof of the optimal $ L^2 $-error bound \cref{eq:semi_error_L2}}
With the local truncation error estimate in Proposition \ref{prop:local_eror} and the $ L^\infty $-conditional stability estimate in Proposition \ref{prop:stability},
we can prove \cref{eq:semi_error_L2} by mathematical induction.

\begin{proof}[Proof of \cref{eq:semi_error_L2} in \cref{thm:error_estimates}]
	Define the error function $ \en{n} := \psi(t_n) - \psin{n} $ for $ 0 \leq n \leq T/\tau $. For $ 0 \leq n \leq T/\tau-1 $ and $ 0 \leq \alpha \leq 2 $, we have
	\begin{eqnarray}\label{eq:error_propogation}
		\| \en{n+1} \|_{H^\alpha}
		&&= \| \psi(t_{n+1}) - \psin{n+1}  \|_{H^\alpha} = \| \psi(t_{n+1}) - \Phi^\tau (\psin{n}) \|_{H^\alpha} \nonumber \\
		&&\leq \| \psi(t_{n+1}) - \Phi^\tau (\psi(t_n)) \|_{H^\alpha} + \| \Phi^\tau (\psi(t_n)) - \Phi^\tau (\psin{n}) \|_{H^\alpha}.
	\end{eqnarray}
	In the following, we first establish the error bounds in $ L^2 $-norm and $ H^\frac{7}{4} $-norm together by the mathematical induction, which, in particular, yield the uniform $ L^\infty $-bound of $ \psin{n} $. With the uniform $ L^\infty $-bound, we can obtain the uniform $ H^2 $-bound of $ \psin{n} $. The error bound in $ H^1 $-norm will follow from the error bound in $ L^2 $-norm and the uniform $ H^2 $-bound by using the standard interpolation inequalities.
	
	Let $ C_0 := \max\{C(1+M), M, 1\} \geq 1 $ with $M$ given in \eqref{definM} and $ C(\cdot) $ defined in \cref{prop:stability}, and $C_1:=C(M)$ with $C(\cdot)$ defined in \cref{prop:local_eror}. Let $ 0<\tau_0<1 $ be chosen such that
	\begin{equation}\label{eq:tau_0}
		2 C_0Te^{C_0T}C_1\tau_0^\frac{1}{8} \leq 1/c,
	\end{equation}
	where $ c $ is the constant given by the Sobolev embedding $ H^\frac{7}{4} \hookrightarrow L^\infty $. We are going to prove that when $ 0 < \tau < \tau_0 $, we have, for $ 0 \leq n \leq T/\tau $,
	\begin{equation}\label{eq:claim}
		\| \en{n} \|_{L^2} \leq e^{C_0T}C_1 \tau, \quad \| \en{n} \|_{H^\frac{7}{4}} \leq 2 C_0Te^{C_0T}C_1\tau^\frac{1}{8}.
	\end{equation}
	We shall prove \cref{eq:claim} by mathematical induction. When $ n = 0 $, $ \en{n} = \psin{0} - \psi_0 = 0 $, and $ \cref{eq:claim} $ holds trivially. We assume that \cref{eq:claim} holds for $ 0 \leq n \leq m \leq T/\tau -1 $. Under this assumption, we have, by Sobolev embedding, $ \tau < \tau_0 $ and \cref{eq:tau_0},
	\begin{equation}\label{eq:Linfty}
		\| \psin{n} \|_{L^\infty} \leq \| \psi(t_n) \|_{L^\infty} + \| \en{n} \|_{L^\infty} \leq M + c \| \en{n} \|_{H^\frac{7}{4}} \leq M + 1, \quad 0 \leq n \leq m.
	\end{equation}
	Taking $ \alpha = 0 $ and $ \alpha=7/4 $ in \eqref{eq:error_propogation}, we have for $ 0 \leq n \leq T/\tau-1 $,
	\begin{align}
		&\| \en{n+1} \|_{L^2} \leq \| \psi(t_{n+1}) - \Phi^\tau (\psi(t_n)) \|_{L^2} + \| \Phi^\tau (\psi(t_n)) - \Phi^\tau (\psin{n}) \|_{L^2}, \label{error_eq_1}\\
		&\| \en{n+1} \|_{H^\frac{7}{4}} \leq \| \psi(t_{n+1}) - \Phi^\tau (\psi(t_n)) \|_{H^\frac{7}{4}} + \| \Phi^\tau (\psi(t_n)) - \Phi^\tau (\psin{n}) \|_{H^\frac{7}{4}}. \label{error_eq_2}
	\end{align}
	Using \cref{prop:local_eror,prop:stability} with $ \alpha=0 $ and $ \alpha=7/4 $ for \cref{error_eq_1} and \cref{error_eq_2}, respectively, and noting \cref{eq:Linfty}, we have for $ 0 \leq n \leq m $,
	\begin{align}
		&\| \en{n+1} \|_{L^2} \leq (1+C_0 \tau) \| \en{n} \|_{L^2} + C_1 \tau^2, \label{eq:induction_1}\\
		&\| \en{n+1} \|_{H^\frac{7}{4}} \leq \| \en{n} \|_{H^\frac{7}{4}} + C_0 \tau^\frac{1}{8} \| \en{n} \|_{L^2} + C_1 \tau^{1+\frac{1}{8}}.  \label{eq:induction_2}
	\end{align}
	From \cref{eq:induction_1}, using the standard discrete Gronwall's inequality, we have
	\begin{equation}\label{eq:conclusion1}
		\| \en{m+1} \|_{L^2} \leq e^{C_0 T} C_1 \tau.
	\end{equation}
	From \cref{eq:induction_2}, using the assumption that \cref{eq:claim} holds for $ 0 \leq n \leq m $, we have
	\begin{equation}\label{eq:e_H7/4}
		\| \en{n+1} \|_{H^\frac{7}{4}} \leq \| \en{n} \|_{H^\frac{7}{4}} + C_0 \tau^\frac{1}{8}  e^{C_0 T}C_1\tau  + C_1 \tau^{1+\frac{1}{8}}, \quad 0 \leq n \leq m.
	\end{equation}
	Summing over $ n $ from $ 0 $ to $ m $ in \cref{eq:e_H7/4}, noting that $ \en{0}=0 $ and $ C_0 \geq 1 $, we obtain
	\begin{eqnarray}\label{eq:conclusion2}
			\| \en{m+1} \|_{H^\frac{7}{4}}
			&&\leq C_0 \tau^\frac{1}{8}  e^{C_0T}C_1 m\tau + C_1 m \tau^{1+\frac{1}{8}} \nonumber\\
			&&\leq C_0 T e^{C_0T} C_1 \tau^{\frac{1}{8}}+ C_1 T \tau^{\frac{1}{8}} \nonumber\\
			&&\leq 2 C_0 Te^{C_0T} C_1 \tau^\frac{1}{8}.
	\end{eqnarray}
	Combing \eqref{eq:conclusion1} and \eqref{eq:conclusion2}, we prove \cref{eq:claim} for $ n=m+1 $, and thus for all $ 0 \leq n \leq T/\tau $ by mathematical induction.
	
	Then we prove the uniform $ H^2 $ bound of $ \psin{n} $. We first note that \cref{eq:Linfty} now holds for any $ 0 \leq n \leq T/\tau $. Taking $ \alpha=2 $ in \eqref{eq:error_propogation}, using \cref{prop:stability,prop:local_eror} with $ \alpha=2 $ and \cref{eq:claim}, we have for $ 0 \leq n \leq T/\tau - 1 $,
	\begin{eqnarray}\label{eq:e_H2}
		\| \en{n+1} \|_{H^2}
		&&\leq  \| \Psi^\tau (\psi(t_n)) - \Phi^\tau (\psi(t_n))  \|_{H^2} + \| \Phi^\tau (\psi(t_n)) - \Phi^\tau(\psin{n}) \|_{H^2} \nonumber\\
		&&\leq \| \en{n} \|_{H^2} + C_0 \| \en{n} \|_{L^2} + C_1 \tau \nonumber\\
		&&\leq \| \en{n} \|_{H^2} + C_0 e^{C_0T}C_1 \tau + C_1 \tau.
	\end{eqnarray}
	Summing \eqref{eq:e_H2} from $ 0 $ to $ n-1 $, we obtain
	\begin{equation}\label{uniformH2bound}
		\| \en{n} \|_{H^2} \leq C_0 e^{C_0T}C_1 n \tau + C_1 n \tau \leq 2C_0e^{C_0T} C_1 T, \quad 0 \leq n \leq T/\tau.
	\end{equation}
	
	Finally, combining \cref{eq:claim,uniformH2bound}, and using the interpolation inequality for the $ H^1 $-error bound, we prove \cref{eq:semi_error_L2}.
\end{proof}

\subsection{Proof of the optimal $ H^1 $-error bound \cref{eq:semi_error_H1}}
To prove \cref{eq:semi_error_H1}, we assume that $ V \in W^{1, 4}(\Omega) \cap H^1_\text{per}(\Omega) $, $ f $ satisfies Assumption \cref{B}, $ \psi \in C([0, T]; H_\text{\rm per}^3(\Omega)) \cap C^1([0, T]; H^1(\Omega)) $ and $ 0<\tau <\tau_0 $ with $ \tau_0 $ given in \cref{eq:tau_0}. Under the assumptions above, $ B:H^1_\text{per}(\Omega) \rightarrow H^1_\text{per}(\Omega) $ satisfies
\begin{equation}\label{eq:diff_B_H1}
	\| B(v) - B(w) \|_{H^1} \leq C(\| v \|_{H^3}, \| w \|_{H^2}, \| V \|_{W^{1, 4}(\Omega)}) \| v-w \|_{H^1}, \quad v, w \in H^2(\Omega). \vspace{-1.5em}
\end{equation}
\begin{proof}[Proof of \cref{eq:semi_error_H1} in \cref{thm:error_estimates}]
	From \eqref{eq:local_error_integral}, using \cref{eq:diff_B_H1} and the isometry property of $ e^{i t \Delta} $, and noting that $ \psi \in C^1([0, T]; H^1(\Omega)) $, we have, for $ 0 \leq n \leq T/\tau-1 $,
	\begin{equation}\label{eq:semi_local_error_H1}
		\| \psi(t_{n+1}) - \Phi^\tau(\psi(t_n)) \|_{H^1} \leq \int_0^\tau \| B(\psi(t_n+s)) - B(\psi(t_n)) \|_{H^1} \rmd s \lesssim \tau^2.
	\end{equation}
	Noting that $ |\vphi_1(i \theta)| \leq 1 $ for $ \theta \in \R $, we have
	\begin{equation}\label{eq:phi_1_H1}
		\| \vphi_1(i \tau \Delta) v\|_{H^1} \leq \| v \|_{H^1}, \quad v \in H^1_\text{per}(\Omega),
	\end{equation}
	which implies, by recalling \cref{eq:Phi} and using \cref{eq:diff_B_H1} again,
	\begin{eqnarray}\label{eq:semi_stability_H1}
		\| \Phi^\tau(\psi(t_n)) - \Phi^\tau(\psin{n}) \|_{H^1}
		&&\leq \| \psi(t_n) - \psin{n} \|_{H^1} + \tau \| B(\psi(t_n)) - B(\psin{n}) \|_{H^1} \nonumber\\
		&&\leq (1 + C \tau) \| \psi(t_n) - \psin{n} \|_{H^1}, \quad  0 \leq n \leq T/\tau-1,
	\end{eqnarray}
	where $ C $ depends on $ \| V \|_{W^{1, 4}} $, $ \| \psi(t_n) \|_{H^3} $ and $ \| \psin{n} \|_{H^2} $, which are uniformly bounded. Then \cref{eq:semi_error_H1} follows from \eqref{eq:semi_local_error_H1} and \eqref{eq:semi_stability_H1} by the standard Lady Windermere's fan argument.
\end{proof}

\section{Optimal error bounds for the full discretization \eqref{EWI-FS}}
In this section, we establish optimal error bounds in $L^2$- and $H^1$-norm for the full-discretization scheme EWI-FS \eqref{EWI-FS}, and generalize them to the EWI-EFP scheme \cref{EWI-EFP}.

\subsection{Main results}
For $ \psihn{n}\ (0 \leq n \leq T/\tau) $ obtained by the EWI-FS scheme \cref{EWI-FS}, we have
\begin{theorem}\label{thm:error_estimates_FS}
	Assume that $ V \in L^\infty(\Omega) $, $ f $ satisfies Assumption \cref{A} and the exact solution $ \psi \in C([0, T]; H_{\text{\rm per}}^2(\Omega)) \cap C^1([0, T]; L^2(\Omega)) $, there exists $ \tau_0>0 $ and $ h_0>0 $ depending on $ M $ and $ T $ and sufficiently small such that for any $ 0 < \tau < \tau_0 $ and $ 0 < h < h_0 $, we have
	\begin{equation}\label{eq:full_error_L2}
		\begin{aligned}
			&\| \psi(\cdot, t_n) - \psihn{n} \|_{L^2} \lesssim \tau + h^2, \quad \| \psihn{n} \|_{H^2} \leq C(M), \\
			&\| \psi(\cdot, t_n) - \psihn{n} \|_{H^1} \lesssim \sqrt{\tau} + h,  \qquad 0 \leq n \leq T/\tau.
		\end{aligned}
	\end{equation}
	Moreover, if $ V \in W^{1, 4}(\Omega) \cap H_\text{\rm per}^1(\Omega) $, $ f $ satisfies \cref{B} and $ \psi \in C([0, T]; H_\text{\rm per}^3(\Omega)) \cap C^1([0, T]; H^1(\Omega)) $, we have, for $ 0 < \tau < \tau_0 $ and $ 0 < h < h_0 $,
	\begin{equation}\label{eq:full_error_H1}
		\| \psi(\cdot, t_n) - \psihn{n} \|_{L^2} \lesssim \tau + h^3, \quad \| \psi(\cdot, t_n) - \psihn{n} \|_{H^1} \lesssim \tau + h^2, \qquad 0 \leq n \leq T/\tau.
	\end{equation}
\end{theorem}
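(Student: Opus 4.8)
The plan is to reduce the analysis of the full discretization \cref{EWI-FS} to the semi-discretization already controlled in \cref{thm:error_estimates}, treating the Fourier spectral projection $P_N$ as a perturbation. The guiding idea is to split the total error through the \emph{semi-discrete} solution $\psin{n}$,
\[
\psi(t_n) - \psihn{n} = (\psi(t_n) - \psin{n}) + (\psin{n} - P_N\psin{n}) + (P_N\psin{n} - \psihn{n}),
\]
so that the first term is the temporal error $O(\tau)$ from \cref{thm:error_estimates}, the second is a pure projection error controlled by the uniform bound $\|\psin{n}\|_{H^2}\leq C(M)$ (also from \cref{thm:error_estimates}) together with the standard estimate $\|\psin{n}-P_N\psin{n}\|_{L^2}\lesssim h^2\|\psin{n}\|_{H^2}$, and only the third, purely spatial error $\eta^n:=P_N\psin{n}-\psihn{n}\in X_N$ needs to be propagated.

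For $\eta^n$ I would first note, using that $e^{i\tau\Delta}$ and $\vphi_1(i\tau\Delta)$ commute with $P_N$, that $\psihn{n+1}=\Phih^\tau(\psihn{n})$ yields the recursion $\eta^{n+1}=e^{i\tau\Delta}\eta^n-i\tau\vphi_1(i\tau\Delta)P_N(B(\psin{n})-B(\psihn{n}))$ with $B$ as in \cref{eq:B_def}. Since $e^{i\tau\Delta}$ is an $L^2$-isometry and $P_N$, $\vphi_1(i\tau\Delta)$ are $L^2$-contractions, \cref{lem:diff_B} gives $\|\eta^{n+1}\|_{L^2}\leq(1+\CL(M_0)\tau)\|\eta^n\|_{L^2}+\CL(M_0)\tau\|\psin{n}-P_N\psin{n}\|_{L^2}$, and the discrete Gronwall inequality (with $\eta^0=0$) produces $\|\eta^n\|_{L^2}\lesssim h^2$, with \emph{no} factor of $\tau$ appearing. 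The decisive payoff is that, because $\eta^n\in X_N$ with a genuinely $O(h^2)$ $L^2$-norm, the inverse inequalities $\|\eta^n\|_{L^\infty}\lesssim h^{-1/2}\|\eta^n\|_{L^2}$ (in 1D) and $\|\eta^n\|_{H^2}\lesssim h^{-2}\|\eta^n\|_{L^2}$ immediately give an $L^\infty$-\emph{small} $\|\eta^n\|_{L^\infty}$ and a \emph{bounded} $\|\eta^n\|_{H^2}$, with no coupling between $\tau$ and $h$. These feed the induction: since $\|P_N\psin{n}\|_{L^\infty}\lesssim\|\psin{n}\|_{H^2}\leq C(M)$, the $L^\infty$-smallness of $\eta^n$ keeps $\|\psihn{n}\|_{L^\infty}\leq M_0$ fixed so that the local Lipschitz constant $\CL(M_0)$ stays controlled and the induction on $\|\eta^n\|_{L^2}$ closes, while the $H^2$-bound delivers $\|\psihn{n}\|_{H^2}\leq\|P_N\psin{n}\|_{H^2}+\|\eta^n\|_{H^2}\leq C(M)$. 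Assembling the three pieces proves the $L^2$-bound $\tau+h^2$ in \cref{eq:full_error_L2}, and the $H^1$-bound $\sqrt{\tau}+h$ follows by interpolating $\|\psi(t_n)-\psihn{n}\|_{H^1}\lesssim\|\psi(t_n)-\psihn{n}\|_{L^2}^{1/2}\|\psi(t_n)-\psihn{n}\|_{H^2}^{1/2}$ against the uniform $H^2$-bound just obtained.

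For the optimal $H^1$-estimate \cref{eq:full_error_H1} I would instead compare directly with the projected \emph{exact} solution $\bar e^n:=P_N\psi(t_n)-\psihn{n}$, so as to exploit the assumed $H^3$-regularity of $\psi$ through the sharper projection bounds $\|\psi(t_n)-P_N\psi(t_n)\|_{H^1}\lesssim h^2$ and $\|\psi(t_n)-P_N\psi(t_n)\|_{L^2}\lesssim h^3$. The error propagation then carries a local truncation term $P_N(\psi(t_{n+1})-\Phi^\tau(\psi(t_n)))$, bounded in $H^1$ by $\tau^2$ exactly as in \cref{eq:semi_local_error_H1}, plus a projection--nonlinearity defect $-i\tau\vphi_1(i\tau\Delta)P_N(B(\psi(t_n))-B(P_N\psi(t_n)))$. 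The crucial point is that in the $H^1$-norm the multiplier $\vphi_1(i\tau\Delta)$ is merely bounded (\cref{eq:phi_1_H1}), so this defect is only $O(\tau)\|B(\psi(t_n))-B(P_N\psi(t_n))\|_{H^1}\lesssim\tau h^2$ by \cref{eq:diff_B_H1}; summed over $T/\tau$ steps it contributes just $O(h^2)$. Stability in $H^1$ comes from \cref{eq:diff_B_H1} together with the uniform $H^2$-bound of $\psihn{n}$ already in hand, giving a factor $1+C\tau$. A discrete Gronwall / Lady Windermere's fan argument then yields $\|\bar e^n\|_{H^1}\lesssim\tau+h^2$ and, running the same scheme in $L^2$ with the $O(\tau h^3)$ defect, $\|\bar e^n\|_{L^2}\lesssim\tau+h^3$; adding the projection errors completes \cref{eq:full_error_H1}.

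I expect the main obstacle to be precisely the removal of any coupling condition in the uniform $L^\infty$- and $H^2$-bounds of the full-discrete solution. The tempting route---propagating $\bar e^n=P_N\psi(t_n)-\psihn{n}$ directly and controlling $L^\infty$ through an $H^{7/4}$-bound as in the semi-discrete proof---fails: there the smoothing estimate \cref{lem:smoothingofphi1} costs a factor $\tau^{-\alpha/2}$, which, combined with the $O(h^2)$ projection error \emph{inside} the nonlinearity, produces an unavoidable $\tau^{1/8}h^2$ per step, i.e.\ a spurious $\tau^{-7/8}h^2$ after summation (equivalently $h^2/\tau$ in $H^2$), forcing $h^2\lesssim\tau^{7/8}$. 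The whole reason for routing the $L^2$-estimate through the semi-discrete solution is that $\eta^n$ then has a clean $O(h^2)$ bound with no hidden $\tau$, so that the inverse inequalities can be applied harmlessly; checking that this decoupling is legitimate---that the uniform $H^2$-bound of $\psin{n}$ from \cref{thm:error_estimates} really suffices and that the induction on $\|\eta^n\|_{L^2}$ closes before the inverse inequalities are invoked---is the technical heart of the argument.
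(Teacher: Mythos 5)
Your proposal is correct and follows essentially the same route as the paper: the $L^2$-bound \cref{eq:full_error_L2} is obtained exactly as in \cref{thm:truncation_error} by propagating $P_N\psin{n}-\psihn{n}$ via Gronwall plus an induction closed through inverse inequalities (which is precisely how the paper avoids any $\tau$--$h$ coupling), and the $H^1$-bound \cref{eq:full_error_H1} is obtained, as in the paper, by comparing $\psihn{n}$ directly with $P_N\psi(t_n)$ using the local truncation/stability estimates from \cref{eq:diff_B_H1} and Lady Windermere's fan. The only cosmetic difference is that you derive the intermediate $\sqrt{\tau}+h$ bound by interpolating the total error between $L^2$ and $H^2$, whereas the paper runs the triangle-inequality decomposition at $\alpha=1$; these are equivalent.
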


\begin{remark}\label{rem:coupling}
	Thanks to the strong $ H^2 $-control of the semi-discretization solution in \cref{eq:semi_error_L2}, there is no coupling condition between $ \tau $ and $ h $ for all $ 1 \leq d \leq 3 $ in \cref{thm:error_estimates_FS}.
\end{remark}


In the following, we shall prove \cref{thm:error_estimates_FS}. We use different methods to prove \cref{eq:full_error_L2} and \cref{eq:full_error_H1}. For the $ L^2 $-norm error bound \cref{eq:full_error_L2}, we compare the full-discretization solution $ \psihn{n} $ with the semi-discretization solution $ \psin{n} $ to avoid the coupling condition between $ \tau $ and $ h $ when using the inverse inequalities. Then, for the $ H^1 $-norm error bound \cref{eq:full_error_H1}, we can directly compare the full-discretization solution with the exact solution since we already have control of the full-discretization solution in $ H^2 $-norm.

In the rest of this section, we assume that $ V \in L^\infty(\Omega) $, $ f $ satisfies Assumption \cref{A} and $ \psi \in C([0, T]; H_{\text{\rm per}}^2(\Omega)) \cap C^1([0, T]; L^2(\Omega)) $.

\subsection{Proof of the optimal $ L^2 $-error bound \cref{eq:full_error_L2}}
We start with the error estimates between the semi-discretization solution $ \psin{n} $ and the full-discretization solution $ \psihn{n} $.

\begin{proposition}\label{thm:truncation_error}
	Let $ 0<\tau<\tau_0 $ with $ \tau_0 $ given in \cref{thm:error_estimates}. Then there exists $ h_0 $ depending on $ M $ and $ T $ and small enough such that for $ 0<h<h_0 $, we have
	\begin{equation*}
		\| P_N \psin{n} - \psihn{n} \|_{L^2} \leq C(M, T)h^2, \quad 0 \leq n \leq T/\tau.
	\end{equation*}
\end{proposition}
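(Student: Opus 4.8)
The plan is to compare the two flows $\Phi^\tau$ (semi-discrete) and $\Phih^\tau$ (full-discrete EWI-FS) applied to the common iterate, and to track the propagation of the error $\ehn{n} := P_N \psin{n} - \psihn{n}$ step by step. The two schemes differ only in that EWI-FS inserts the projection $P_N$ in front of $B$, and starts from $P_N \psi_0$ rather than $\psi_0$; so the ``local'' discrepancy at each step is governed by projection-error estimates of the form $\|(I - P_N)u\|_{L^2} \lesssim h^s \|u\|_{H^s}$, applied to $u = \psin{n}$ (which is in $H^2$ uniformly by \cref{thm:error_estimates}) and to the nonlinear term. I would set up the recursion for $\|\ehn{n+1}\|_{L^2}$ by writing $\Phih^\tau(\psihn{n}) = P_N\big(e^{i\tau\Delta}\psihn{n} - i\tau\vphi_1(i\tau\Delta)B(\psihn{n})\big)$ on $X_N$ and subtracting from $P_N\Phi^\tau(\psin{n})$.

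First I would split $\ehn{n+1} = P_N\big(\Phi^\tau(\psin{n}) - \Phi^\tau(I_N\text{-lift of }\psihn{n})\big) + (\text{commutator terms})$. More concretely, since $P_N$ commutes with $e^{i\tau\Delta}$ and with $\vphi_1(i\tau\Delta)$ (both are Fourier multipliers), the difference reduces to
\begin{equation*}
	\ehn{n+1} = P_N e^{i\tau\Delta} \ehn{n} - i\tau\, \vphi_1(i\tau\Delta)\, P_N\big(B(\psin{n}) - B(\psihn{n})\big) - i\tau\,\vphi_1(i\tau\Delta)\,P_N(I - P_N)B(\psin{n}).
\end{equation*}
The last term vanishes because $P_N(I-P_N) = 0$, so the genuine projection error enters only through the initial data $P_N\psi_0 - \psihn{0} = 0$ and through the fact that $B(\psin{n})$ need not lie in $X_N$; careful bookkeeping shows the spatial error is actually fed in by comparing $P_N B(\psin{n})$ against $P_N B(\psihn{n})$ where $\psihn{n}$ approximates $P_N\psin{n}$, not $\psin{n}$. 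The clean way is to estimate $\|B(\psin{n}) - B(\psihn{n})\|_{L^2}$ via \cref{lem:diff_B}, bounding it by $\CL(M_0)\|\psin{n} - \psihn{n}\|_{L^2} \leq \CL(M_0)(\|\ehn{n}\|_{L^2} + \|(I-P_N)\psin{n}\|_{L^2})$, and the second piece is $\lesssim h^2\|\psin{n}\|_{H^2} \lesssim h^2 C(M)$ by the uniform $H^2$-bound. Using the isometry of $e^{i\tau\Delta}$, $\|P_N\|_{L^2\to L^2}\leq 1$, and $\|\vphi_1(i\tau\Delta)\|_{L^2\to L^2}\leq 1$, this yields a Gronwall recursion $\|\ehn{n+1}\|_{L^2} \leq (1 + C\tau)\|\ehn{n}\|_{L^2} + C(M)\tau h^2$, whose discrete solution is $O(h^2)$ uniformly in $n \leq T/\tau$.

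The one subtlety requiring care is controlling $\|\psihn{n}\|_{L^\infty}$ so that \cref{lem:diff_B} applies with a uniform $M_0$: one must run this $L^2$-recursion together with an $H^{7/4}$- (or $H^2$-) estimate, exactly mirroring the induction in the proof of \cref{eq:semi_error_L2}, because \cref{lem:diff_B} needs an a priori $L^\infty$-bound on $\psihn{n}$. I expect \emph{the main obstacle} to be precisely this: establishing the uniform $L^\infty$/$H^2$-control of $\psihn{n}$ without a coupling condition between $\tau$ and $h$. The resolution, and the whole point of comparing against $\psin{n}$ rather than against the exact solution $\psi$, is that the semi-discrete solution already carries a uniform $H^2$-bound $C(M)$ from \cref{thm:error_estimates}; so it suffices to propagate a \emph{small} $H^{7/4}$-bound on $\ehn{n}$ via the smoothing estimate \cref{lem:smoothingofphi1} (giving the factor $\tau^{-\alpha/2}$ that the projection term $h^2$ can absorb once $h_0$ is small relative to powers of $\tau_0$), which then gives $\|\psihn{n}\|_{L^\infty} \leq \|\psin{n}\|_{L^\infty} + c\|\ehn{n}\|_{H^{7/4}} \leq M + 1$ and closes the induction. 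Finally the stated bound follows by taking $h_0$ small enough.
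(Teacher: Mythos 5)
Your setup coincides with the paper's proof of \cref{thm:truncation_error}: the same error function $\ehn{n} = P_N\psin{n} - \psihn{n}$ with $\ehn{0}=0$, the same use of the commutation of $P_N$ with $e^{i\tau\Delta}$ and $\vphi_1(i\tau\Delta)$ to get the recursion, the same splitting of $B(\psin{n})-B(\psihn{n})$ through $B(P_N\psin{n})$ handled by \cref{lem:diff_B} together with the projection estimate and the uniform $H^2$-bound from \cref{eq:semi_error_L2}, and the same discrete Gronwall recursion. You also correctly identify the crux: \cref{lem:diff_B} needs an a priori $L^\infty$-bound on $\psihn{n}$, to be closed by induction \emph{without} a coupling condition between $\tau$ and $h$.

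However, your proposed mechanism for closing that induction fails. Propagating an $H^{7/4}$-bound on $\ehn{n}$ via \cref{lem:smoothingofphi1} gives per step
\begin{equation*}
	\| \ehn{n+1} \|_{H^{7/4}} \leq \| \ehn{n} \|_{H^{7/4}} + \tau\, C \tau^{-7/8} \| B(\psin{n}) - B(\psihn{n}) \|_{L^2}
	\leq \| \ehn{n} \|_{H^{7/4}} + C \tau^{1/8}\bigl( \| \ehn{n} \|_{L^2} + h^2 \bigr),
\end{equation*}
and once $\|\ehn{n}\|_{L^2} \lesssim h^2$ each increment is $O(\tau^{1/8}h^2)$; summing the $O(T/\tau)$ steps yields
\begin{equation*}
	\| \ehn{m} \|_{H^{7/4}} \lesssim T\, \tau^{-7/8} h^2,
\end{equation*}
which is small only under the coupling $h^2 \lesssim \tau^{7/8}$. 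Note that the exponent involves the \emph{actual} step size $\tau$, not $\tau_0$: since $\tau$ ranges over all of $(0,\tau_0)$, no choice of $h_0$ depending only on $M$ and $T$ (i.e., ``small relative to powers of $\tau_0$'', as you put it) makes $\tau^{-7/8}h^2$ uniformly small. Your argument therefore proves only a weaker statement with a CFL-type restriction, contradicting the form of \cref{thm:truncation_error} ($h_0$ independent of $\tau$) and defeating the point stressed in \cref{rem:coupling}. The structural difference from the semi-discrete induction you are mirroring is that there the local $L^2$-source is $O(\tau^2)$, so the smoothing cost $\tau^{-7/8}$ still leaves a positive power of $\tau$ after accumulation, whereas here the source is $O(\tau h^2)$ and the factor $\tau^{-\alpha/2}$ has nothing to cancel it. The paper's fix is different and exploits that $\ehn{n} \in X_N$ is a trigonometric polynomial: the inverse inequality \cref{eq:inverse}, $\| \phi \|_{L^\infty} \leq C_{\text{inv}} h^{-d/2} \| \phi \|_{L^2}$ for $\phi \in X_N$, converts the Gronwall bound $\| \ehn{n} \|_{L^2} \lesssim h^2$ into $\| \ehn{n} \|_{L^\infty} \lesssim h^{2-d/2}$, which is small for $h < h_0(M,T)$ alone when $d \leq 3$; then $\| \psihn{n} \|_{L^\infty} \leq \| P_N \psin{n} \|_{L^\infty} + \| \ehn{n} \|_{L^\infty} \leq M_0 + 1$ closes the induction with no reference to $\tau$. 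Replacing your smoothing-based $H^{7/4}$ propagation by this inverse-inequality step makes your proof complete and essentially identical to the paper's.
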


\begin{proof}
	Define the error function $ \ehn{n} := P_N \psin{n} - \psihn{n} $ for $ 0 \leq n \leq T/\tau $. Then $ \ehn{0} = P_N \psin{0} - \psihn{0} = 0 $. Applying $ P_N $ on both sides of \cref{EWI}, noting that $ P_N $ commutes with $ e^{i \tau \Delta} $ and $ \vphi_1(i \tau \Delta) $ and recalling \cref{eq:B_def}, we have
	\begin{equation}\label{eq:P_Npsi}
		P_N \psin{n+1} = e^{i \tau \Delta} P_N \psin{n} - i \tau \vphi_1(i \tau \Delta) P_N B(\psin{n}), \quad 0 \leq n \leq T/\tau-1.
	\end{equation}
	Recalling \cref{EWI-FS,eq:B_def}, we have
	\begin{equation}\label{eq:psi_h}
		\psihn{n+1} = e^{i \tau \Delta} \psihn{n} - i \tau \vphi_1(i \tau \Delta) P_N B(\psihn{n}), \quad 0 \leq n \leq T/\tau-1.
	\end{equation}
	Subtracting \cref{eq:psi_h} from \cref{eq:P_Npsi}, we have, for $0 \leq n \leq T/\tau-1$, 
	\begin{equation}\label{eq:eh_equation}
		\ehn{n+1} = e^{i \tau \Delta} \ehn{n} - i \tau \vphi_1(i \tau \Delta) P_N (B(\psin{n}) - B(\psihn{n})).
	\end{equation}
	From \cref{eq:eh_equation}, using the isometry property of $ e^{i t \Delta} $, the $ L^2 $-projection property of $ P_N $ and \cref{lem:smoothingofphi1} with $ \alpha=0 $, we have, for $ 0 \leq n \leq T/\tau - 1 $,
	\begin{eqnarray}\label{eq:errorh}
			\| \ehn{n+1} \|_{L^2}
			&\leq&\| \ehn{n} \|_{L^2} + \tau \| \vphi_1(i \tau \Delta) P_N ((B(\psin{n}) - B(\psihn{n})) \|_{L^2} \nonumber\\
			&\leq&\| \ehn{n} \|_{L^2} + \tau \| (B(\psin{n}) - B(\psihn{n}) \|_{L^2}\nonumber \\
			&\leq&\| \ehn{n} \|_{L^2} + \tau \| (B(\psin{n}) - B(P_N \psin{n}) \|_{L^2} + \tau \| (B(P_N \psin{n}) - B(\psihn{n}) \|_{L^2}.
	\end{eqnarray}
	By \cref{eq:semi_error_L2}, using Sobolev embedding and the boundedness of $ P_N $, we have
	\begin{equation}\label{eq:linfty_psin}
		\| P_N \psin{n} \|_{L^\infty} \leq \tilde c \| P_N \psin{n} \|_{H^2} \leq \tilde c \| \psin{n} \|_{H^2} \leq \tilde c C(M) =: M_0, \quad 0 \leq n \leq T/\tau, 
	\end{equation}
	where $\tilde{c}$ is given by the Sobolev embedding $H^2 \hookrightarrow L^\infty$. Similarly, $ \| \psin{n} \|_{L^\infty} \leq M_0 $. From \eqref{eq:errorh}, noting \cref{eq:linfty_psin}, using \cref{lem:diff_B}, the uniform $H^2$-bound in \cref{eq:semi_error_L2}, and the standard projection error estimate $ \| \phi - P_N \phi \|_{L^2} \lesssim h^2 | \phi |_{H^2} \  \forall \phi \in H^2_\text{per}(\Omega) $, we have, for $ 0 \leq n \leq T/\tau - 1 $,
	\begin{equation}\label{eq:error_propagation_full}
		\| \ehn{n+1} \|_{L^2} \leq \| \ehn{n} \|_{L^2} + \CL(\max\{M_0, \| \psihn{n} \|_{L^\infty}\}) \tau \| \ehn{n} \|_{L^2} + \tilde C_1 \tau h^2, 
	\end{equation}
	where $\tilde C_1$ depends exclusively on $M$. The conclusion then follows from the discrete Gronwall's inequality and the standard induction argument by using the inverse inequality \cite{book_spectral} 
	\begin{equation}\label{eq:inverse}
		\| \phi \|_{L^\infty} \leq C_\text{inv} h^{-\frac{d}{2}}\| \phi \|_{L^2}, \quad \phi \in X_N, 
	\end{equation}
	where $ d $ is the dimension of the space, i.e. $ d=1 $ in the current case. For the convenience of the reader, we present this process in the following. 
	
	{Let $ \tilde C_0 := \CL(1+M_0) $ with $\CL(\cdot)$ given in \cref{lem:diff_B} and recall $\tilde C_1$ given by \cref{eq:error_propagation_full}. Let $ 0<h_0<1 $ be chosen such that 
	\begin{equation}\label{eq:h_0}
			C_\text{inv}e^{\tilde C_0 T} \tilde C_1 h_0^{2-d/2} \leq 1. 
		\end{equation}
	We shall show that, when $0<h<h_0$, for $0 \leq n \leq T/\tau$, 
	\begin{equation}\label{eq:claim_2}
		\| \ehn{n} \|_{L^2} \leq e^{\tilde C_0 T} \tilde C_1 h^2, \quad \| \psihn{n} \|_{L^\infty} \leq 1+M_0. 
	\end{equation}
	Recall that $ \ehn{0} = 0 $, and by \cref{eq:linfty_psin}, $ \| \psihn{0} \|_{L^\infty}=\| P_N \psi_0 \|_{L^\infty} = \| P_N \psin{0} \|_{L^\infty} \leq M_0 $. Then \cref{eq:claim_2} holds for $n=0$. Assume that \cref{eq:claim_2} holds for $ 0 \leq n \leq m \leq T/\tau-1 $, which implies, from \cref{eq:error_propagation_full},  
	\begin{equation}\label{eq:error_propagation_induction}
		\| \ehn{n+1} \|_{L^2} \leq (1 + \tilde C_0 \tau) \| \ehn{n} \|_{L^2} + \tilde C_1 \tau h^2, \quad 0 \leq n \leq m,  
	\end{equation}
	which further implies, by using discrete Gronwall's inequality, 
	\begin{equation}\label{eq:eh_L2}
		\| \ehn{m+1} \|_{L^2} \leq e^{\tilde C_0 T} \tilde C_1 h^2. 
	\end{equation}
	From \cref{eq:eh_L2}, using \cref{eq:inverse}, recalling \cref{eq:linfty_psin,eq:h_0}, we have, by triangle inequality, 
	\begin{equation}\label{eq:Linfty_psi_h}
		\begin{aligned}
			\| \psihn{m+1} \|_{L^\infty} 
			&\leq \| \ehn{m+1} \|_{{L^\infty}} + \| P_N \psin{m+1} \|_{L^\infty} \leq C_\text{inv} h^{2-d/2} \| \ehn{m+1} \|_{{L^2}} + M_0 \\
			&\leq C_\text{inv} e^{\tilde C_0 T} \tilde C_1 {h^{2-d/2}} + M_0 \leq 1+M_0. 
		\end{aligned}
	\end{equation}
	Combing \cref{eq:eh_L2,eq:Linfty_psi_h} proves \cref{eq:claim_2} for $ n = m+1 $, and thus for all $ 0 \leq n \leq T/\tau $ by mathematical induction, which completes the proof.}
\end{proof}

\begin{proof}[Proof of \cref{eq:full_error_L2} in \cref{thm:error_estimates_FS}]
	By triangle inequality, for $ 0 \leq \alpha \leq 2 $,
	\begin{equation}\label{eq:decomp_full}
		\|  \psi(t_n) - \psihn{n} \|_{H^\alpha} \leq \| \psi(t_n) - \psin{n}  \|_{H^\alpha} + \| \psin{n} - P_N \psin{n} \|_{H^\alpha} + \| P_N \psin{n} - \psihn{n} \|_{H^\alpha}.
	\end{equation}
	From \cref{eq:semi_error_L2}, using the interpolation inequalities, we have
	\begin{equation}\label{eq:semi_error_Hs}
		 \| \psi(t_n) - \psin{n}  \|_{H^\alpha} \lesssim \tau^{1-\alpha/2}, \quad 0 \leq \alpha \leq 2.
	\end{equation}
	From \cref{eq:decomp_full}, using \cref{eq:semi_error_Hs} and the standard Fourier projection error estimates
	\begin{equation}
		\| \phi - P_N \phi \|_{H^\alpha} \lesssim h^{2-\alpha} | \phi |_{H^2}, \quad 0 \leq \alpha \leq 2, \quad \phi \in H^2_{\text{per}}(\Omega),
	\end{equation}
	and noting \cref{eq:semi_error_L2}, we have
	\begin{equation}\label{eq:error_decomp_full}
		\| \psi(t_n) - \psihn{n}  \|_{H^\alpha} \lesssim \tau^{1-\alpha/2} + h^{2-\alpha} + \|  P_N \psin{n} - \psihn{n} \|_{H^\alpha}.
	\end{equation}
	From \cref{eq:error_decomp_full}, using the inverse estimate $ \| \phi \|_{H^\alpha} \lesssim h^{-\alpha} \| \phi \|_{L^2} \  \forall \phi \in X_N $ \cite{book_spectral_2,book_spectral} and \cref{thm:truncation_error}, we have
	\begin{equation}\label{eq:full_error_proof}
		\| \psihn{n} - \psi(t_n) \|_{H^\alpha} \lesssim \tau^{1-\alpha/2} + h^{2-\alpha} + h^{-\alpha}h^2 \lesssim \tau^{1-\alpha/2} + h^{2-\alpha}, \quad 0 \leq \alpha \leq 2,
	\end{equation}
	which proves \cref{eq:full_error_L2} by taking $ \alpha=0, 1, 2 $.
\end{proof}

\subsection{Proof of the  optimal $ H^1 $-error bound \eqref{eq:full_error_H1}}
To prove \cref{eq:full_error_H1}, we assume that $ V \in W^{1, 4}(\Omega) \cap H_\text{\rm per}^1(\Omega) $, $ f $ satisfies Assumption \cref{B}, $ \psi \in C([0, T]; H_\text{\rm per}^3(\Omega)) \cap C^1([0, T]; H^1(\Omega)) $ and $ 0<\tau<\tau_0 $, $ 0<h<h_0 $. Since we already have the uniform control of $ \psihn{n} $ in $ H^2 $-norm, \cref{eq:full_error_H1} can be proved similarly to \cref{eq:semi_error_H1}, and we just sketch the outline here.

\begin{proof}[Proof of \cref{eq:full_error_H1} in \cref{thm:error_estimates_FS}]
	Recalling \cref{eq:duhamel,EWI-FS}, we have
	\begin{eqnarray}
		&&P_N \psi(t_{n+1}) - \Phih^\tau(P_N \psi(t_n)) \nonumber \\
		&&=-i \int_0^\tau e^{i (\tau - s)\Delta} P_N \left ( B(\psi(t_n+s)) - B(P_N \psi(t_n)) \right ) \rmd s, \quad 0 \leq n \leq \frac{T}{\tau}-1,
	\end{eqnarray}
	which implies, by the property of $ e^{it\Delta} $ and $ P_N $, for $ X=L^2 $ or $ H^1 $, that
	\begin{eqnarray}
		&&\| P_N \psi(t_{n+1}) - \Phih^\tau(P_N \psi(t_n)) \|_{X} \nonumber \\
		&&\leq \int_0^\tau \left(\| B(\psi(t_n+s)) - B(\psi(t_n)) \|_{X} + \| B(\psi(t_n))- B(P_N \psi(t_n)) \|_{X} \right) \rmd s. \label{local_error_decomp_full}
	\end{eqnarray}
	From \eqref{local_error_decomp_full}, using \cref{lem:diff_B} and \cref{eq:diff_B_H1}, we have, for $ 0 \leq n \leq T/\tau-1$,
	\begin{equation}
		\begin{aligned}
			&\| P_N \psi(t_{n+1}) - \Phih^\tau(P_N \psi(t_n)) \|_{L^2} \lesssim \tau^2 + \tau h^3, \\
			&\| P_N \psi(t_{n+1}) - \Phih^\tau(P_N \psi(t_n)) \|_{H^1} \lesssim \tau^2 + \tau h^2.
		\end{aligned}
	\end{equation}
	Besides, recalling \cref{EWI-FS}, using \cref{lem:smoothingofphi1,lem:diff_B}, \cref{eq:smoothingofphi1} and \cref{eq:diff_B_H1}, we have
	\begin{equation}
		\begin{aligned}
			&\| \Phih^\tau(P_N \psi(t_n)) - \Phih^\tau(\psihn{n}) \|_{L^2} \leq (1+C_1\tau) \| P_N \psi(t_n) - \psihn{n} \|_{L^2}, \\
			&\| \Phih^\tau(P_N \psi(t_n)) - \Phih^\tau(\psihn{n}) \|_{H^1} \leq (1+C_2\tau) \| P_N \psi(t_n) - \psihn{n} \|_{H^1},
		\end{aligned}
	\end{equation}
	where $ C_1 $ depends on $ \| P_N \psi(t_n) \|_{L^\infty} $ and $ \| \psihn{n} \|_{L^\infty} $, and $ C_2 $ depends on $ \| P_N \psi(t_n) \|_{H^3} $ and $ \| \psihn{n} \|_{H^2} $ for $ 0 \leq n \leq T/\tau -1 $. Thus, both $ C_1 $ and $ C_2 $ are under control. Then the proof can be completed by the Lady Windermere's fan argument and the standard projection error estimates of $ P_N $.
\end{proof}

\subsection{Extension to the EWI-EFP \cref{EWI-EFP}}
{For $ I_N \psihhn{n}(0 \leq n \leq T/\tau) $ obtained from the EWI-EFP scheme \cref{EWI-EFP}, it satisfies the same error bounds as $\psihn{n} (0 \leq n \leq T/\tau)$ in \Cref{thm:error_estimates_FS}, under the same assumptions on potential and the exact solution, but with a little more regular nonlineairty.} To be precise, we introduce another assumption on the nonlinearity as
\begin{equation}\label{C}
	f(|v|^2)v \in H^\alpha_\text{per}(\Omega), \quad \forall v \in H^\alpha_\text{per}(\Omega). \tag{C}
\end{equation}

For the optimal $L^2$-norm error bound, we assume that $ f $ satisfies Assumptions \cref{A} and \cref{C} with $\alpha=2$. Two typical examples of $f$ include (i) $ f(\rho) = \lambda_1\rho^{\sigma_1} + \lambda_2\rho^{\sigma_2} $ with $ \sigma_2>\sigma_1 \geq 1/2 $ and $\lambda_1,\lambda_2\in{\mathbb R}$, and (ii) $ f(\rho) =\lambda \rho^\sigma \ln \rho $ with $ \sigma > 1/2 $ and $\lambda\in{\mathbb R}$.

For the optimal $H^1$-norm error bound, we assume, in addition to Assumption \cref{B}, $ f $ satisfies \cref{C} with $ \alpha=3 $ and the discrete counterpart of Assumption \cref{B}
\begin{equation}\label{B'}
	\| I_N (f(|v|^2)v - f(|w|^2)w)  \|_{H^1} \leq C(\| v \|_{H^3}, \| w \|_{H^2}) \| v-w \|_{H^1}, \quad v, w \in X_N, \tag{B'}
\end{equation}
with two typical examples of $f$: (i) $ f(\rho) =\lambda_1 \rho^{\sigma_1} +\lambda_2 \rho^{\sigma_2} $ with $ \sigma_2>\sigma_1 \geq 1 $ and $\lambda_1,\lambda_2\in{\mathbb R}$, and (ii) $ f(\rho) = \lambda\rho^\sigma \ln \rho $ with $ \sigma > 1 $ and $\lambda\in{\mathbb R}$ (see, e.g., \cite{bao2023} for the proof).

Then we have the following error bounds for the EWI-EFP scheme \cref{EWI-EFP}.
\begin{corollary}\label{coro:error_estimates_EFP}
	Assume that $ V \in L^\infty(\Omega) $, $ f $ satisfies Assumptions \cref{A} and \cref{C} with $\alpha=2$ and the exact solution $ \psi \in C([0, T]; H_{\text{\rm per}}^2(\Omega)) \cap C^1([0, T]; L^2(\Omega)) $. There exists $ \tau_0>0 $ and $ h_0>0 $ depending on $ M $ and $ T $ and sufficiently small such that for any $ 0 < \tau < \tau_0 $ and $ 0 < h < h_0 $, we have
	\begin{equation}\label{eq:full_error_L2_EFP}
		\begin{aligned}
			&\| \psi(\cdot, t_n) - I_N \psihhn{n} \|_{L^2} \lesssim \tau + h^2, \quad \| I_N \psihhn{n} \|_{H^2} \leq C(M), \\
			&\| \psi(\cdot, t_n) - I_N \psihhn{n} \|_{H^1} \lesssim \sqrt{\tau} + h,  \qquad 0 \leq n \leq T/\tau.
		\end{aligned}
	\end{equation}
	Moreover, if $ V \in W^{1, 4}(\Omega) \cap H_\text{\rm per}^1(\Omega) $, $ f $ satisfies Assumptions \cref{B}, \cref{B'} and \cref{C} with $\alpha=3$ and $ \psi \in C([0, T]; H_\text{\rm per}^3(\Omega)) \cap C^1([0, T]; H^1(\Omega)) $, we have, for $ 0 < \tau < \tau_0 $ and $ 0 < h < h_0 $,
	\begin{equation}\label{eq:full_error_H1_EFP}
		\| \psi(\cdot, t_n) - I_N \psihhn{n} \|_{L^2} \lesssim \tau + h^3, \  \| \psi(\cdot, t_n) - I_N \psihhn{n} \|_{H^1} \lesssim \tau + h^2, \quad 0 \leq n \leq T/\tau.
	\end{equation}
\end{corollary}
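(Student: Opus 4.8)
The plan is to reduce the analysis of the EWI-EFP scheme \cref{EWI-EFP} to that of the EWI-FS scheme \cref{EWI-FS} by working with the interpolant $I_N \psihhn{n} \in X_N$, which by \cref{EWI-FSP_equation} satisfies a recursion identical to \cref{EWI-FS} except that the nonlinear term is discretized by the Fourier interpolation $I_N \F(\psihhn{n})$ instead of the $L^2$-projection $P_N \F(\psihn{n})$; the potential term $P_N(V I_N \psihhn{n})$ is already in the same spectrally exact form as in \cref{EWI-FS}. Accordingly, I would first establish an analogue of \cref{thm:truncation_error}, namely a bound $\| P_N \psin{n} - I_N \psihhn{n} \|_{L^2} \lesssim h^2$ comparing the EWI-EFP solution against the semi-discretization \cref{EWI} rather than against the exact solution, so as to retain the absence of any coupling condition between $\tau$ and $h$.

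Concretely, setting $\ehhn{n} := P_N \psin{n} - I_N \psihhn{n}$ and subtracting \cref{EWI-FSP_equation} from \cref{eq:P_Npsi}, I would use the isometry of $e^{i\tau\Delta}$ and \cref{lem:smoothingofphi1} with $\alpha=0$ to propagate the $L^2$-error. The inhomogeneous term splits into three pieces: the potential contribution $P_N(V(\psin{n} - I_N\psihhn{n}))$, the Lipschitz difference $P_N(\F(P_N\psin{n}) - \F(I_N\psihhn{n}))$ controlled by \cref{lem:diff_B} and Assumption \cref{A}, and the only genuinely new term, the consistency gap $(P_N - I_N)\F(I_N\psihhn{n})$ between projection and interpolation of the nonlinearity. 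Using Assumption \cref{C} with $\alpha=2$, the function $\F(I_N\psihhn{n}) = f(|I_N\psihhn{n}|^2)I_N\psihhn{n}$ lies in $H^2_\text{per}(\Omega)$ with norm controlled by the uniform $H^2$-bound of $I_N\psihhn{n}$, so the standard projection and interpolation error estimates bound this gap by $O(h^2)$, exactly matching the projection error already present in the EWI-FS analysis. The uniform $H^2$-bound of $I_N\psihhn{n}$ and its $L^\infty$-control are obtained as in \cref{thm:truncation_error}, via a discrete Gronwall plus induction argument that converts the $O(h^2)$ $L^2$-estimate into an $L^\infty$-estimate through the inverse inequality \cref{eq:inverse}, with $h_0$ chosen small enough to close the induction without linking $\tau$ and $h$. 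The $L^2$- and interpolated $H^1$-bounds in \cref{eq:full_error_L2_EFP} then follow from the triangle inequality $\psi(t_n) - I_N\psihhn{n} = (\psi(t_n) - \psin{n}) + (\psin{n} - P_N\psin{n}) + (P_N\psin{n} - I_N\psihhn{n})$, combining the semi-discrete bounds \cref{eq:semi_error_L2}, the projection error of $P_N$, and the new estimate, precisely as in the proof of \cref{eq:full_error_L2}.

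For the $H^1$-bound \cref{eq:full_error_H1_EFP}, since the uniform $H^2$-control of $I_N\psihhn{n}$ is now available, I would follow the proof of \cref{eq:full_error_H1} and compare $I_N\psihhn{n}$ directly with $P_N\psi(t_n)$ through a Lady Windermere's fan argument, using the one-step map defined by the recursion \cref{EWI-FSP_equation}. The local error requires estimating the one-step $H^1$-discrepancy, where the interpolation of the nonlinearity forces us to replace the bound \cref{eq:diff_B_H1} on $B$ in $H^1$ by its discrete counterpart \cref{B'}, together with Assumption \cref{C} with $\alpha=3$, which guarantees $\F(I_N\psihhn{n}) \in H^3_\text{per}(\Omega)$ and hence an $O(h^2)$ interpolation error in $H^1$. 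The stability step then uses \cref{lem:smoothingofphi1}, the bound \cref{eq:phi_1_H1}, \cref{lem:diff_B} and Assumption \cref{B'} to produce a factor $(1+C\tau)$ with $C$ controlled by the now-bounded $H^3$-norm of $P_N\psi(t_n)$ and $H^2$-norm of $I_N\psihhn{n}$, after which the standard summation closes the argument.

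The main obstacle is the replacement of the $L^2$-orthogonal projection $P_N$ by the interpolation operator $I_N$, which is \emph{not} $L^2$-stable on all of $L^2(\Omega)$ and only acts on continuous functions; this is exactly why the nonlinearity must be assumed a little more regular through Assumption \cref{C} (with $\alpha=2$ for the $L^2$-bound and $\alpha=3$ for the $H^1$-bound) and why the discrete stability assumption \cref{B'} is needed in the $H^1$-estimate. Everything else, including the smoothing estimate for $\vphi_1(i\tau\Delta)$, the $L^\infty$-conditional stability, and the inverse-inequality bootstrap that removes the coupling condition, carries over verbatim from the proofs of \cref{thm:error_estimates,thm:error_estimates_FS}.
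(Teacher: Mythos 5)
Your overall architecture matches the paper's: compare $I_N\psihhn{n}$ with $P_N\psin{n}$ through the recursion \cref{EWI-FSP_equation}, propagate the $L^2$-error using the isometry of $e^{i\tau\Delta}$ and \cref{lem:smoothingofphi1}, close an induction with the inverse inequality \cref{eq:inverse} so that $\tau$ and $h$ stay decoupled, finish with the triangle inequality, and run Lady Windermere's fan with \cref{B'} for the $H^1$-bound. The gap lies in where you place the projection--interpolation consistency term. You write it as $(P_N - I_N)\F(I_N\psihhn{n})$ and bound it by $O(h^2)$ using ``the uniform $H^2$-bound of $I_N\psihhn{n}$''. But that uniform $H^2$-bound is part of the conclusion \cref{eq:full_error_L2_EFP} being proved; inside the induction you only have the $L^\infty$-control furnished by \cref{eq:inverse} (exponent $-d/2$, giving a vanishing correction $h^{2-d/2}$). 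Trying to recover $H^2$-control from the induction hypothesis costs $h^{-2}\cdot O(h^2)=O(1)$, i.e., a bound proportional to the Gronwall constant $A$; the consistency-gap constant $C\bigl(\|\F(I_N\psihhn{n})\|_{H^2}\bigr)$ then depends on $A$, while $A$ is in turn determined by that very constant, so the constants are circular and a fixed point need not exist. The paper's \cref{lem:PNB-BN} avoids this by decomposing $P_N B(v) - B_N(w) = P_N(V(v-w)) + (P_N - I_N)\F(v) + I_N\F(v) - I_N\F(w)$ with $v = P_N\psin{n}$: the interpolation error falls on the \emph{semi-discrete} solution, whose $H^2$-bound $C(M)$ is already known from \cref{thm:error_estimates}, and the difference $I_N\F(v) - I_N\F(w)$ is estimated in the discrete $L^2$-norm through grid values and Assumption \cref{A}, which needs only $L^\infty$-control of $w = I_N\psihhn{n}$.

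The same misplacement is more damaging in your $H^1$-argument: you claim Assumption \cref{C} with $\alpha=3$ ``guarantees $\F(I_N\psihhn{n})\in H^3_\text{per}(\Omega)$ and hence an $O(h^2)$ interpolation error in $H^1$''. A quantitative bound of this kind requires a uniform $H^3$-bound on $I_N\psihhn{n}$, which is never established (only $H^2$ is), and cannot be retrieved from the $O(h^2)$ $L^2$-error by an inverse inequality, since $h^{-3}\cdot h^2 = h^{-1}$ blows up. In a correct execution of the fan argument, the local error compares $P_N\psi(t_{n+1})$ with $\Phiht(P_N\psi(t_n))$, so the interpolation gap lands on $\F(P_N\psi(t_n))$ --- the exact solution, which is in $H^3$ by hypothesis --- while \cref{B'} in the stability step needs only the $H^3$-norm of $P_N\psi(t_n)$ and the $H^2$-norm of $I_N\psihhn{n}$, both of which are available. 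Your plan is salvageable, but only after rearranging both decompositions exactly as in \cref{lem:PNB-BN}; as written, the regularity you invoke for the numerical solution is circular in the $L^2$-part and unavailable in the $H^1$-part. (A minor further point: unlike the EWI-FS case, $\ehhn{0} = P_N\psi_0 - I_N\psi_0 \neq 0$; it is only $O(h^2)$ in $L^2$, which the Gronwall step must, and does, absorb.)
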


{
For notational simplicity, we define $B_N:C(\overline{\Omega}) \rightarrow X_N$ as
\begin{equation}\label{eq:BN_def}
	B_N(\phi) := P_N(V \phi) + I_N \F(\phi), \quad \phi \in C(\overline{\Omega}), 
\end{equation}
where $\F(\phi)(x)=f(|\phi(x)|^2)\phi(x)$ for $x \in \Omega$. Then we have
\begin{lemma}\label{lem:PNB-BN}
	Let $v, w \in X_N$. Assume that $V \in L^\infty(\Omega)$ and $f$ satisfies \cref{A} and \cref{C} with $\alpha=2$. If $\| v \|_{H^2} \leq M_0$ and $\| w \|_{L^\infty} \leq M_1$, we have
	\begin{equation}\label{eq:PNB-BN}
		\| P_N B(v) - B_N(w) \|_{L^2} \leq C(\|V\|_{L^\infty}, M_0, M_1)\| v-w \|_{L^2} + C(M_0)h^2.  
	\end{equation}
	Moreover, if $V \in W^{1, 4}(\Omega) \cap H^1_\text{per}(\Omega)$, $f$ satisfies Assumptions \cref{B'} and \cref{C} with $\alpha = 3$, and $\| v \|_{H^3} \leq M_2$, and $\| w \|_{H^2} \leq M_3$, we have
	\begin{equation}\label{eq:PNB-BN_2}
		\| P_N B(v) - B_N(w) \|_{H^1} \leq C(\| V \|_{W^{1, 4}}, M_2, M_3)\| v-w \|_{H^1} + C(M_2)h^2. 
	\end{equation}
\end{lemma}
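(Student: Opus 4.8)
The plan is to split $P_N B(v) - B_N(w)$ into a linear potential contribution and a genuinely nonlinear contribution, and to arrange the decomposition so that every high-regularity estimate is applied to $v$ (which we control in $H^2$, resp.\ $H^3$) while $w$ enters only through Lipschitz-type differences, thereby respecting the asymmetric hypotheses $\|v\|_{H^2}\le M_0$ versus $\|w\|_{L^\infty}\le M_1$ (and $\|v\|_{H^3}\le M_2$ versus $\|w\|_{H^2}\le M_3$). Recalling $B(v)=Vv+\F(v)$ and \cref{eq:BN_def}, I would write
\[
	P_N B(v) - B_N(w) = P_N\big(V(v-w)\big) + \big[P_N \F(v) - I_N \F(v)\big] + I_N\big(\F(v) - \F(w)\big).
\]
The potential term is handled at once: since $P_N$ is a Fourier truncation it is bounded on every $H^s$, so $\|P_N(V(v-w))\|_{L^2}\le \|V\|_{L^\infty}\|v-w\|_{L^2}$, and for the $H^1$-estimate $\|P_N(V(v-w))\|_{H^1}\le \|V(v-w)\|_{H^1}\le C(\|V\|_{W^{1,4}})\|v-w\|_{H^1}$, using $\partial_x(V\phi)=V'\phi+V\phi'$ together with the embeddings $W^{1,4}(\Omega)\hookrightarrow L^\infty(\Omega)$ and $H^1(\Omega)\hookrightarrow L^4(\Omega)$ in 1D.

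For the consistency term $P_N\F(v)-I_N\F(v)$, I would first invoke Assumption \cref{C} (with $\alpha=2$ for \cref{eq:PNB-BN} and $\alpha=3$ for \cref{eq:PNB-BN_2}) to conclude $\F(v)\in H^\alpha_{\text{per}}(\Omega)$ with $\|\F(v)\|_{H^\alpha}\le C(\|v\|_{H^\alpha})$. Writing $P_N\F(v)-I_N\F(v)=(P_N\F(v)-\F(v))+(\F(v)-I_N\F(v))$ and applying the standard projection and interpolation error estimates $\|g-P_N g\|_{H^s}\lesssim h^{\alpha-s}|g|_{H^\alpha}$ and $\|g-I_N g\|_{H^s}\lesssim h^{\alpha-s}|g|_{H^\alpha}$ (the latter valid since $\alpha>1/2$ in 1D) then yields $\|P_N\F(v)-I_N\F(v)\|_{L^2}\le C(M_0)h^2$ and $\|P_N\F(v)-I_N\F(v)\|_{H^1}\le C(M_2)h^2$, producing exactly the $O(h^2)$ remainders in \cref{eq:PNB-BN,eq:PNB-BN_2}.

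The remaining term $I_N(\F(v)-\F(w))$ is the main obstacle, and the two norms require different devices. For the $L^2$-bound I would exploit the exactness of the rectangle rule on $X_N$: since $I_N g$ is a trigonometric polynomial of degree $N$, Parseval for the discrete Fourier transform gives $\|I_N g\|_{L^2}^2 = h\sum_{j=0}^{N-1}|g(x_j)|^2$. Applying this to $g=\F(v)-\F(w)$ and using the pointwise bound from Assumption \cref{A} (legitimate since $\|v\|_{L^\infty}\le \tilde c M_0$ by $H^2\hookrightarrow L^\infty$ and $\|w\|_{L^\infty}\le M_1$, so with $M'=\max\{\tilde c M_0,M_1\}$) gives $\|I_N(\F(v)-\F(w))\|_{L^2}^2\le \Cl(M')^2 h\sum_j|v(x_j)-w(x_j)|^2=\Cl(M')^2\|I_N(v-w)\|_{L^2}^2$; since $v,w\in X_N$ the interpolation is the identity, $I_N(v-w)=v-w$, so this collapses to $\Cl(M')\|v-w\|_{L^2}$. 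This pointwise argument has no $H^1$ analogue because differentiation does not commute with interpolation in a norm-controlled way; for the $H^1$-bound I would therefore invoke Assumption \cref{B'} directly, which is tailored to precisely this quantity and delivers $\|I_N(\F(v)-\F(w))\|_{H^1}\le C(\|v\|_{H^3},\|w\|_{H^2})\|v-w\|_{H^1}\le C(M_2,M_3)\|v-w\|_{H^1}$. Collecting the three estimates in each norm yields \cref{eq:PNB-BN} and \cref{eq:PNB-BN_2}; the only delicate point, as anticipated, is the $H^1$-control of the interpolated nonlinear difference, which is exactly why the stronger hypothesis \cref{B'} is imposed.
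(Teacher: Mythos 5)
Your proposal is correct and follows essentially the same route as the paper's own proof: the identical decomposition $P_N(V(v-w)) + (P_N - I_N)\F(v) + I_N(\F(v)-\F(w))$, Assumption \cref{C} with projection/interpolation error estimates for the $O(h^2)$ term, the discrete Parseval identity plus Assumption \cref{A} and the fact that $I_N$ is the identity on $X_N$ for the $L^2$-bound, and Assumption \cref{B'} for the $H^1$-bound.
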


\begin{proof}
	Recalling \cref{eq:B_def,eq:BN_def}, we have
	\begin{align}\label{eq:PNB-BN_eq}
		P_N B(v) - B_N(w) 
		&= P_N(V(v - w)) + P_N \F(v) - I_N \F(w) \notag \\
		&= P_N(V(v - w)) + (P_N - I_N) \F(v) + I_N \F(v) - I_N \F(w). 
	\end{align}
	From \cref{eq:PNB-BN_eq}, using assumption \cref{C} with $\alpha = 2$, we have
	\begin{align}\label{eq:PNB-BN_est_1}
		\| P_N B(v) - B_N(w) \|_{L^2} 
		&\lesssim \| V \|_{L^\infty} \| v-w \|_{L^2} + C(M_0) h^2 + \| I_N \F(v) - I_N \F(w) \|_{L^2} \notag \\
		&\leq \| V \|_{L^\infty} \| v-w \|_{L^2} + C(M_0) h^2 + C(M_0, M_1) \| v - w \|_{L^2}, 
	\end{align}
	where we use $\| I_N \F(\phi) \|_{L^2}^2 = h\sum_{j \in \mathcal{T}_N^0} |\F(\phi(x_j))|^2$ and $I_N$ is an identity on $X_N$ in the last line, and we prove \cref{eq:PNB-BN}. 
	
	To prove \cref{eq:PNB-BN_2}, using \cref{C} with $\alpha = 3$ and \cref{B'}, from \cref{eq:PNB-BN_eq}, we have
	\begin{equation*}
		\| P_N B(v) - B_N(w) \|_{H^1} \lesssim  \| V \|_{W^{1, 4}}\| v-w \|_{H^1} + C(M_2) h^2 + C(M_2, M_3)\| v-w \|_{H^1}, 
	\end{equation*}
	which proves \cref{eq:PNB-BN_2}, and completes the proof. 
\end{proof}

\begin{proof}[Proof of \cref{coro:error_estimates_EFP}]
	The proof is similar to the proof of \cref{thm:error_estimates_FS}, and we sketch it here for the convenience of the reader. We start with the proof of \cref{eq:full_error_L2_EFP}. Define the error function $ \ehhn{n} := P_N \psin{n} - I_N \psihhn{n} $ for $ 0 \leq n \leq T/\tau $. Then $ \ehhn{0} = P_N \psi_0 - I_N \psi_0 $ satisfies $\|\ehhn{0}\|_{L^2} \leq C(M) h^2$. Recalling \cref{EWI-FSP_equation,eq:BN_def,eq:P_Npsi}, 
	we obtain, for $0 \leq n \leq T/\tau-1$, 
	\begin{equation}\label{eq:error_eq_EFP}
		\ehhn{n+1} = e^{i\tau\Delta} \ehhn{n} - i \tau \vphi_1(i\tau\Delta) (P_N B (\psin{n}) - B_N(I_N \psihhn{n})). 
	\end{equation}
	From \cref{eq:error_eq_EFP}, by the boundedness of $e^{i\tau\Delta}$, $P_N$ and $\vphi_1(i\tau\Delta)$, \cref{lem:diff_B}, triangle inequality, the uniform $H^2$-bound of $\psin{n}$ in \cref{eq:semi_error_L2}, and \cref{eq:PNB-BN}, we get
	\begin{align}\label{eq:error_eq_EFP_L2}
		\| \ehhn{n+1} \|_{L^2} 
		&\leq \| \ehhn{n} \|_{L^2} + \tau \left( \| P_N B (\psin{n}) - P_NB(P_N \psin{n}) \|_{L^2} \right.  \notag \\
		&\quad + \left. \| P_N B (P_N \psin{n}) - B_N(I_N \psihhn{n}) \|_{L^2} \right) \notag \\
		&\leq \| \ehhn{n} \|_{L^2} + C(M) \tau h^2 + (1+ C(M, \| I_N \psihhn{n} \|_{L^\infty}) \tau) \| \ehhn{n} \|_{L^2}. 
	\end{align}
	From \cref{eq:error_eq_EFP_L2}, by discrete Gronwall's inequality and the same induction process as in the proof of \cref{thm:truncation_error}, noting first step error $\| \ehhn{0} \|_{L^2} \leq C(M)h^2$, we obtain
	\begin{equation}
		\| \ehhn{n} \|_{L^2} \leq C(M, T) h^2, \quad 0 \leq n \leq T/\tau. 
	\end{equation} 
	The rest of the proof of \cref{eq:full_error_L2_EFP} can be completed by following the proof of \cref{eq:full_error_L2}. 
	
	Then we outline the proof of \cref{eq:full_error_H1_EFP}. Define the numerical flow $\Phiht: X_N \rightarrow X_N$ associated with the EWI-EFP scheme \cref{EWI-EFP} as
	\begin{equation}\label{eq:EWI-EFP_flow}
		\Phiht(v) = e^{i\tau \Delta} v - i \tau \vphi_1(i\tau\Delta) B_N(v), \quad v \in X_N. 
	\end{equation}
	Recalling \cref{EWI-FSP_equation}, we have $I_N \psihhn{n+1} = \Phiht(I_N \psihhn{n})$ for $n \geq 0$. Recalling \cref{eq:duhamel,eq:EWI-EFP_flow}, the local truncation error can be decomposed as
	\begin{align}
		P_N \psi(t_{n+1}) - \Phiht(P_N \psi(t_n)) 
		= - i \int_0^\tau e^{i(\tau - s)\Delta} \big( P_N B(\psi(t_n + s))- P_N B(\psi(t_n)) + \notag \\
		P_N B(\psi(t_n)) - P_N B(P_N \psi(t_n)) + P_N B(P_N \psi(t_n)) - B_N(P_N \psi(t_n)) \big) \rmd s, 
	\end{align}
	which implies, by the boundedness of $e^{it\Delta}$ and $P_N$, and using \cref{eq:diff_B_H1,eq:PNB-BN_2},  
	\begin{equation}\label{eq:local_error_EFP_H1}
		\| P_N \psi(t_{n+1}) - \Phiht(P_N \psi(t_n)) \|_{H^1} \lesssim \tau^2 + \tau h^2. 
	\end{equation}
	Besides, recalling \cref{eq:EWI-EFP_flow} and using \cref{B'}, we have $H^1$-stability estimate
	\begin{equation}\label{eq:stability_EFP_H1}
		\| \Phiht(P_N \psi(t_n)) - \Phiht(I_N \psihhn{n}) \|_{H^1} \leq (1 + C_3 \tau) \| P_N \psi(t_n) - I_N \psihhn{n} \|_{H^1}, 
	\end{equation}
	where $C_3$ depends on $\| V \|_{W^{1,4}}$, $\| \psi(t_n) \|_{H^3}$ and $\| I_N \psihhn{n} \|_{H^2}$, and thus is under control. The proof of the $H^1$-error bound in \cref{eq:full_error_H1_EFP} can be completed by applying standard Lady Windermere's fan argument with \cref{eq:local_error_EFP_H1,eq:stability_EFP_H1}. The proof of the $L^2$-error bound in \cref{eq:full_error_H1_EFP} can be obtained similarly. Then the proof is completed. 
\end{proof}

}
\section{Numerical results}
In this section, we present some numerical examples for the NLSE with either low regularity potential or nonlinearity. In the following, we fix $ \Omega = (-16, 16) $, $ T=1 $, $ d=1 $ and consider the power-type nonlinearity $ f(\rho) = -\rho^\sigma\ (\sigma > 0) $.

Let $ \psihn{n} (0 \leq n \leq T/\tau) $ be the numerical solution obtained by the EWI-FS method \cref{EWI-FS} or the EWI-EFP method \cref{EWI-EFP}, which will be made clear in each case. Define the error functions
\begin{equation*}
	e^k_{L^2} = \| \psi(t_k) - I_N \psihn{k}  \|_{L^2}, \quad e^k_{H^1} = \| \psi(t_k) - I_N \psihn{k} \|_{H^1}, \quad 0 \leq k \leq n:=T/\tau.
\end{equation*}

\subsection{For the NLSE with locally Lipschitz nonlinearity}
In this subsection, we only consider the NLSE with the power-type nonlinearity and without potential:
\begin{equation}\label{eq:NLSE_semi-smooth}
	i \partial_t \psi(x, t) = -\Delta \psi(x, t) - |\psi(x, t)|^{2\sigma} \psi(x, t), \quad x \in \Omega, \quad t>0,
\end{equation}
where $ \sigma > 0 $. Recall that Assumption \cref{A} is satisfied for any $ \sigma > 0 $ and Assumption \cref{B} is satisfied for any $ \sigma \geq 1/2 $. Note that when there is no potential, the extended Fourier pseudospectral method collapses to the standard Fourier pseudospectral method.

Two types of initial data are considered:

(i) Type I. The $ H^2 $ initial datum
	\begin{equation}\label{typeI_ini}
		\psi_0(x) = x|x|^{0.51} e^{-\frac{x^2}{2}}, \quad x \in \Omega.
	\end{equation}
	
(ii) Type II. The smooth initial datum
	\begin{equation}\label{typeII_ini}
		\psi_0(x) = x e^{-\frac{x^2}{2}}, \quad x \in \Omega.
	\end{equation}

The two initial data are specially chosen to demonstrate the influence of the low regularity of $ f $ around the origin. Since both Type I and II initial data are odd functions, the corresponding solutions of the NLSE will satisfy $ \psi(0, t) \equiv 0 $ for all $ t \geq 0 $. The difference of these two initial data lies in the regularity.

We shall test the convergence order in both time and space for Type I and II initial data. For each initial datum, we choose $ \sigma = 0.1, 0.2, 0.3, 0.4 $. The 'exact' solutions are computed by the Strang splitting Fourier pseudospectral method with $ \tau = \tau_\text{e} := 10^{-6} $ and $ h = h_\text{e} := 2^{-9} $. {When test the spatial errors, we fix the time step size $\tau = \tau_e$, and when test the temporal errors, we fix the mesh size $h = h_e$.}

We start with the Type I $ H^2 $ initial datum \cref{typeI_ini}. \cref{fig:comp_FS_FP_semi_smooth_H2_ini} exhibits the spatial error in $ L^2 $- and $ H^1 $-norm of the EWI-FS (solid lines) and the EWI-EFP (dotted lines) method for $ \sigma = 0.1 $ with the Type I initial datum. We can observe that the EWI-FS method is second order convergent in $ L^2 $-norm and first order convergent in $ H^1 $-norm. Moreover, we see that there is almost no difference between the spatial error of the EWI-FS method and the EWI-EFP method, which suggests that the Fourier pseudospectral method seems also suitable to discretize the low regularity nonlinearity.

\cref{fig:conv_dt_semi_smooth_H2_ini} plots the temporal error in $ L^2 $- and $ H^1 $-norm of the EWI for different $ 0<\sigma<1/2 $ with Type I initial datum. \cref{fig:conv_dt_semi_smooth_H2_ini} (a) shows that the temporal convergence is first order in $ L^2 $-norm for all the four $ \sigma $ and \cref{fig:conv_dt_semi_smooth_H2_ini} (b) shows the temporal convergence is half order in $ H^1 $-norm for all the four $ \sigma $.

The results in \cref{fig:comp_FS_FP_semi_smooth_H2_ini,fig:conv_dt_semi_smooth_H2_ini} confirm our optimal $ L^2 $-norm error bound for the NLSE with locally Lipschitz nonlinearity, and demonstrate that it is sharp.

\begin{figure}[htbp]
	\centering
	\includegraphics[width=0.475\textwidth]{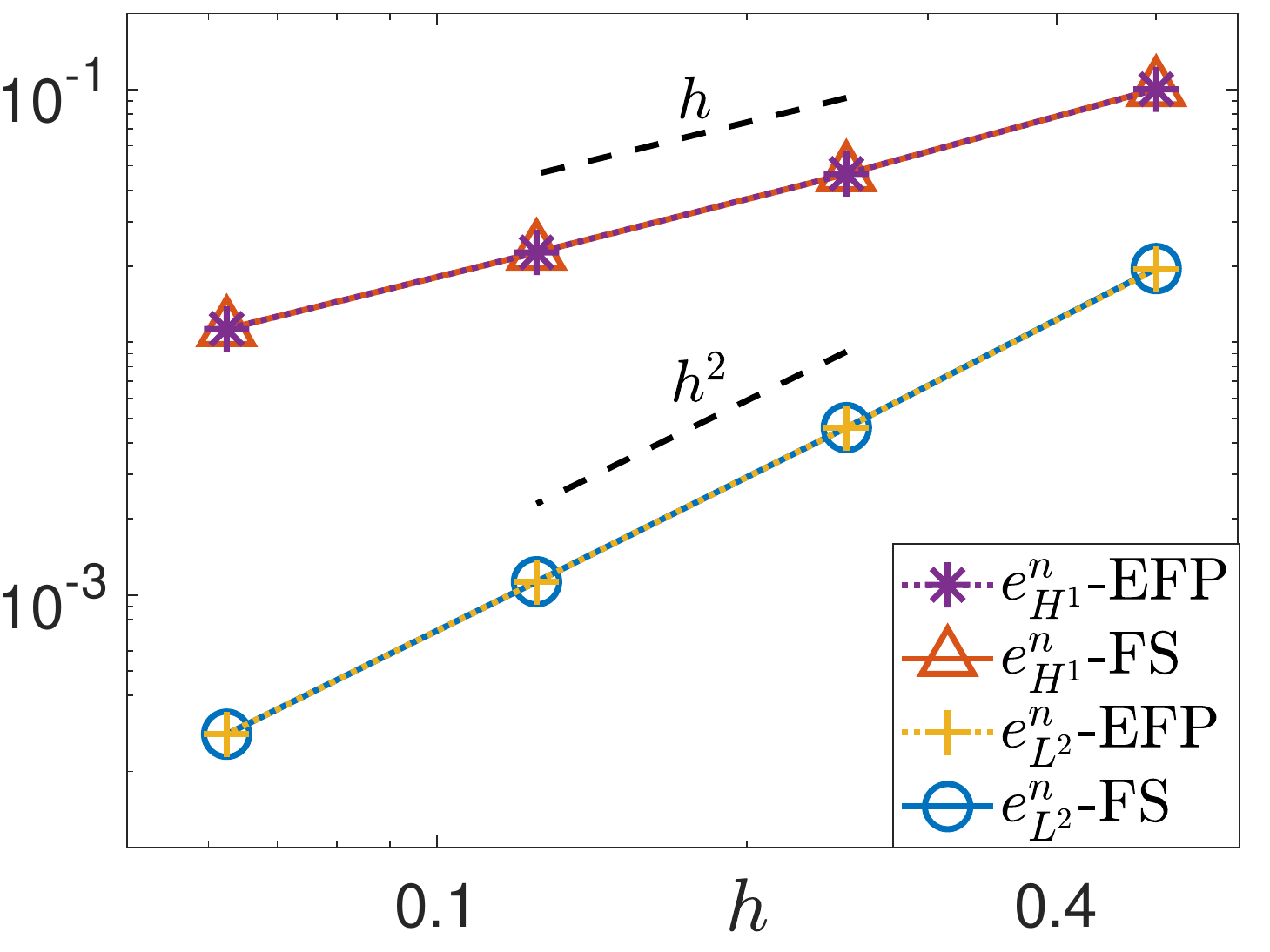}
	\caption{Comparison of the Fourier spectral and pseudospectral discretization of the nonlinear term in \cref{eq:NLSE_semi-smooth} with $ \sigma = 0.1 $ and Type I initial datum \cref{typeI_ini}.}
	\label{fig:comp_FS_FP_semi_smooth_H2_ini}
\end{figure}

\begin{figure}[htbp]
	\centering
	\includegraphics[width=0.475\textwidth]{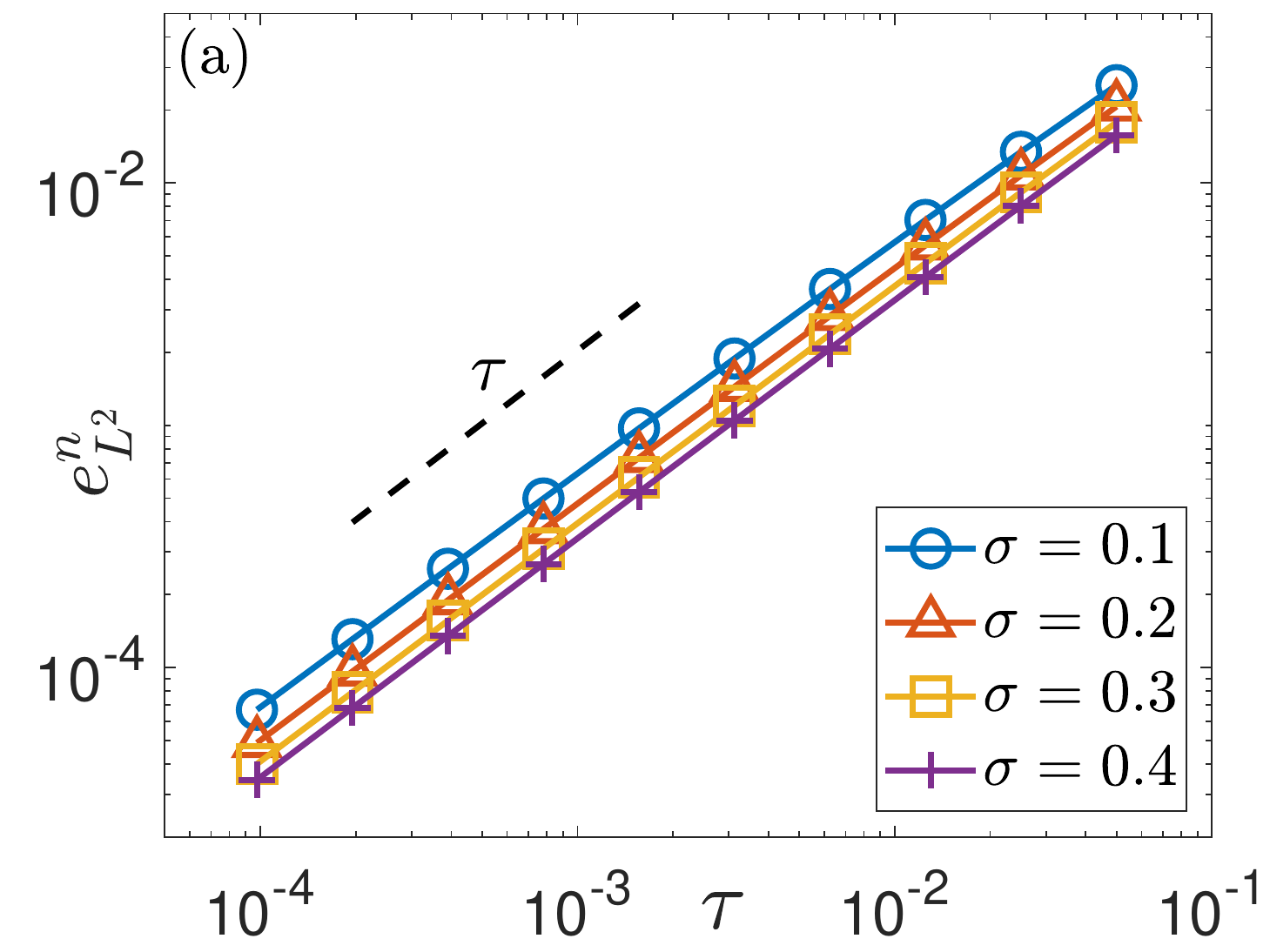}
	\includegraphics[width=0.475\textwidth]{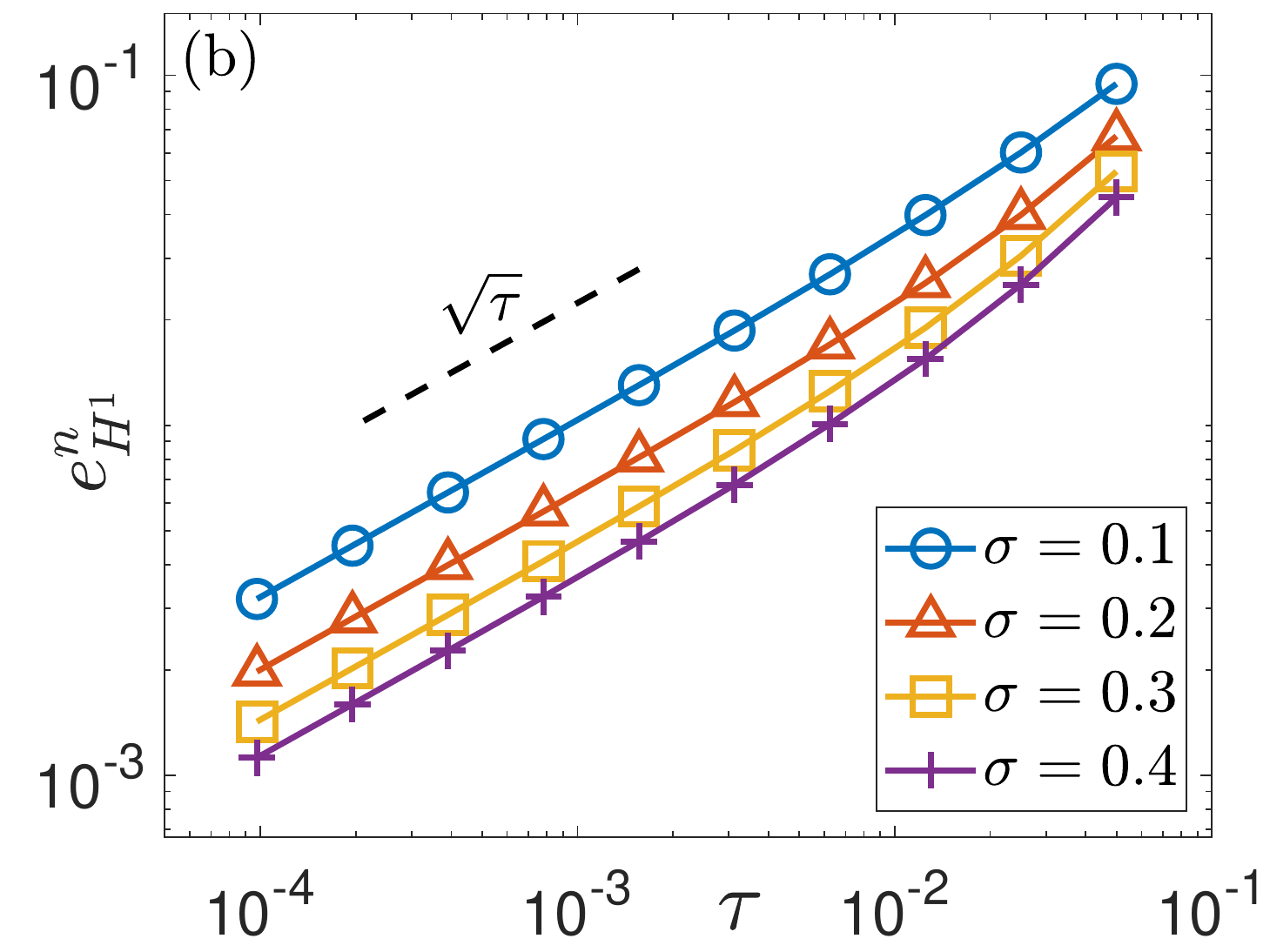}
	\caption{Temporal errors of the EWI for the NLSE \cref{eq:NLSE_semi-smooth} with Type I initial datum \cref{typeI_ini}: (a) $L^2$-norm errors, and (b) $H^1$-norm errors. }
	\label{fig:conv_dt_semi_smooth_H2_ini}
\end{figure}

Then we consider the Type II smooth initial datum \cref{typeII_ini}. \cref{fig:comp_FS_FP_semi_smooth_smooth_ini} shows the spatial error in $ L^2 $- and $ H^1 $-norm of the EWI-FS (solid lines) and the EWI-EFP (dotted lines) method for $ \sigma = 0.1 $ with the Type II initial datum. We can observe that the convergence orders in $ H^1 $-norm of the EWI-FS (solid lines) and the EWI-EFP (dashed lines) are almost the same (roughly 2.5), though the value of the error of the EWI-FS is smaller than the EWI-EFP. While the convergence order in $ L^2 $-norm of the EWI-FS method is roughly 3.5, which is almost one order higher than that of the EWI-EFP method. This observation suggests that when the solution has better regularity, the Fourier spectral method outperforms the Fourier pseudospectral method for discretizing the low regularity nonlinearity.

\cref{fig:conv_dt_semi_smooth_smooth_ini} displays the temporal error in $ L^2 $- and $ H^1 $-norm of the EWI for different $ 0<\sigma<1 $ with the Type II initial datum. \cref{fig:conv_dt_semi_smooth_smooth_ini} (a) and (b) show that the temporal convergence is first order in both $ L^2 $- and $ H^1 $-norm for all the four $ \sigma $. However, currently, we can only prove the first order $ H^1 $-convergence in time under Assumption \cref{B} which holds only when $ \sigma \geq 1/2 $. Besides, as shown in Figure 5.3 in \cite{bao2023}, for the time-splitting methods, we can observe first order convergence in $ H^1 $-norm only when $ \sigma \geq 1/2 $, which suggests that the EWI may be better than the time-splitting methods when the nonlinearity is of low regularity.

The results in \cref{fig:comp_FS_FP_semi_smooth_smooth_ini,fig:conv_dt_semi_smooth_smooth_ini} confirm our optimal $ H^1 $-norm error bound for the NLSE with low regularity nonlinearity, but also indicates that Assumption \cref{B} may be relaxed.

\begin{figure}[htbp]
	\centering {\includegraphics[width=0.475\textwidth]{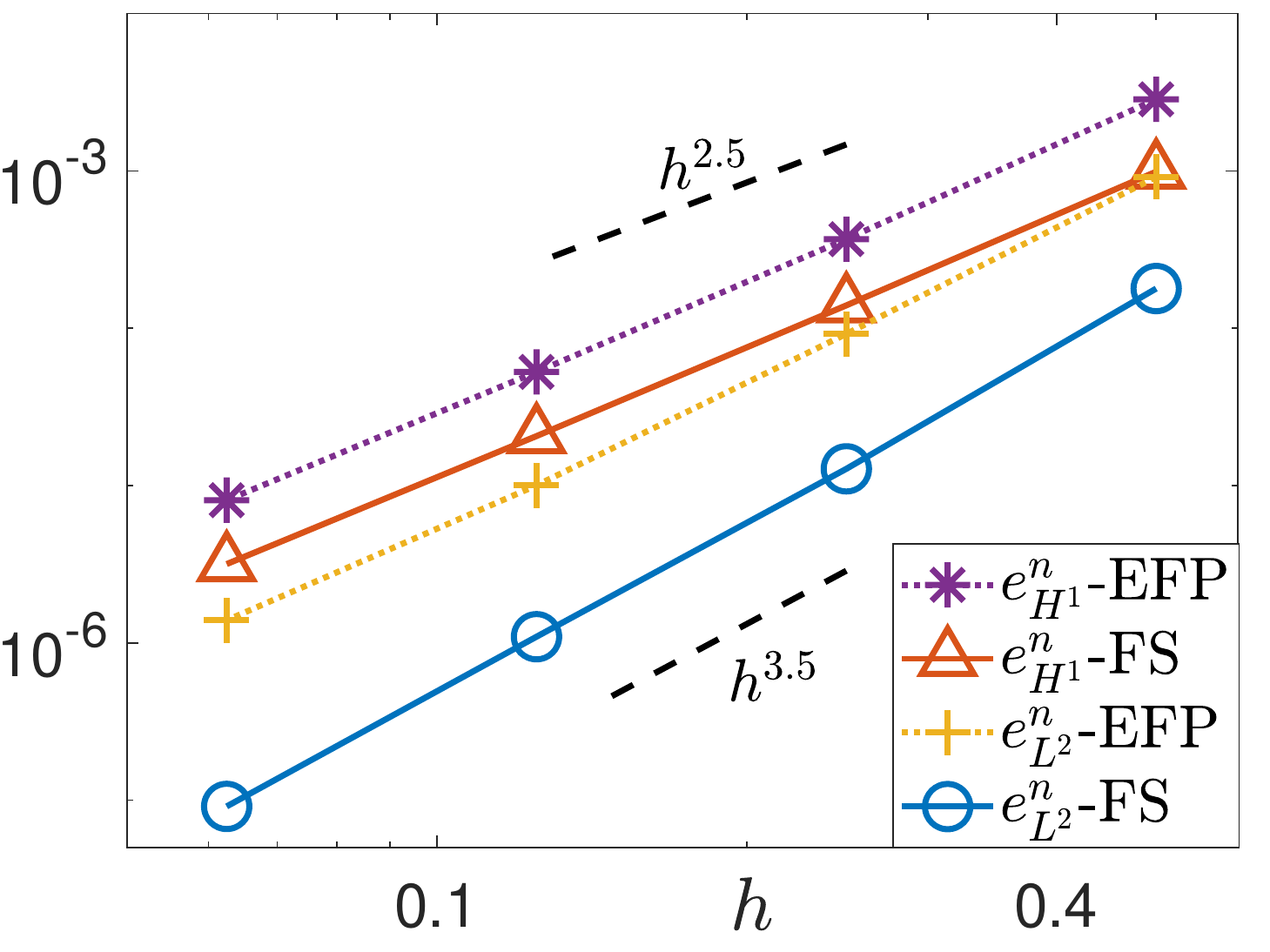}}
	\caption{Comparison of the Fourier spectral and pseudospectral discretizations of the nonlinear term in \cref{eq:NLSE_semi-smooth} with $ \sigma = 0.1 $ and Type II initial datum \cref{typeII_ini}.}
	\label{fig:comp_FS_FP_semi_smooth_smooth_ini}
\end{figure}

\begin{figure}[htbp]
	\centering
	{\includegraphics[width=0.475\textwidth]{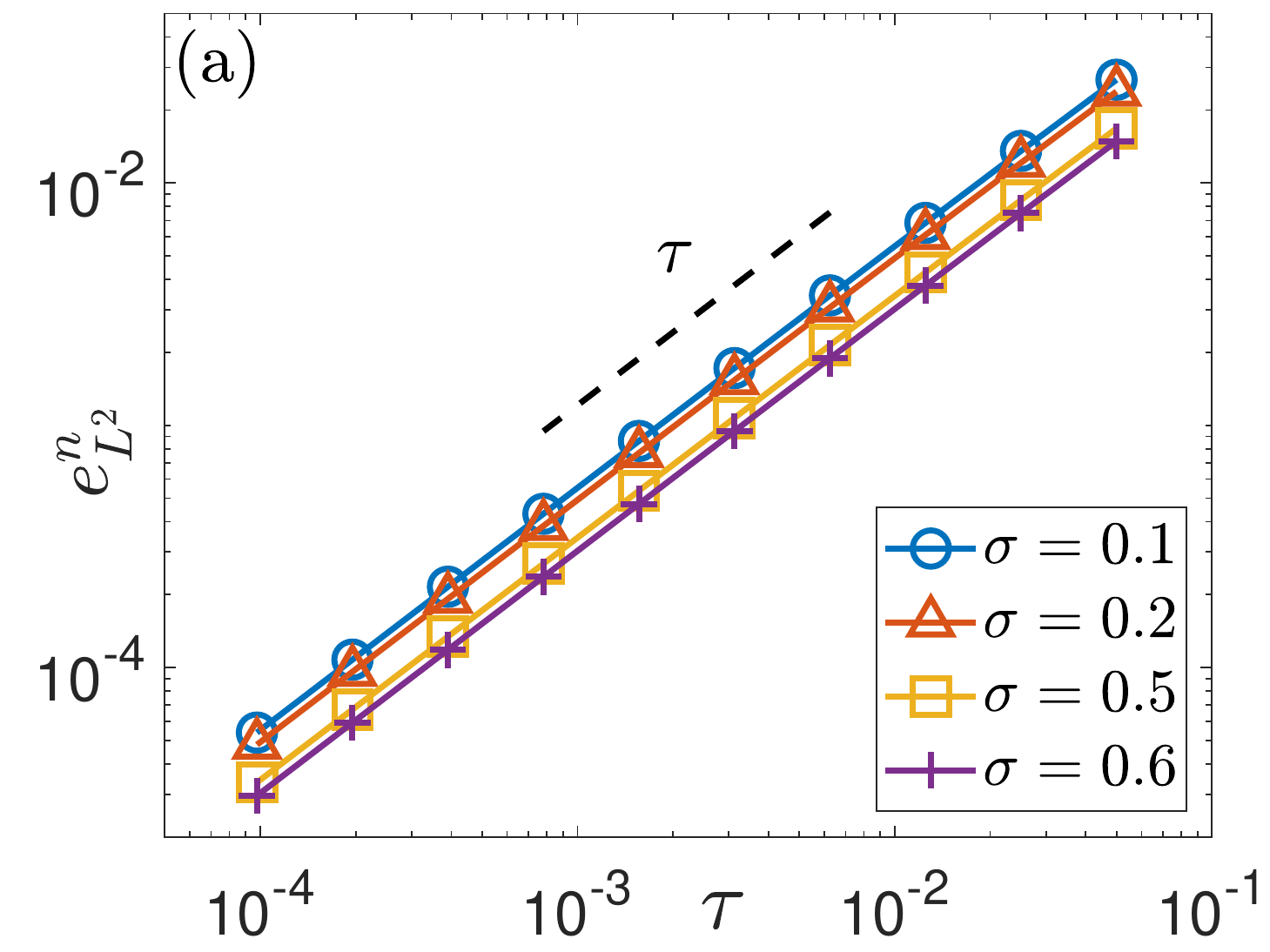}}
	{\includegraphics[width=0.475\textwidth]{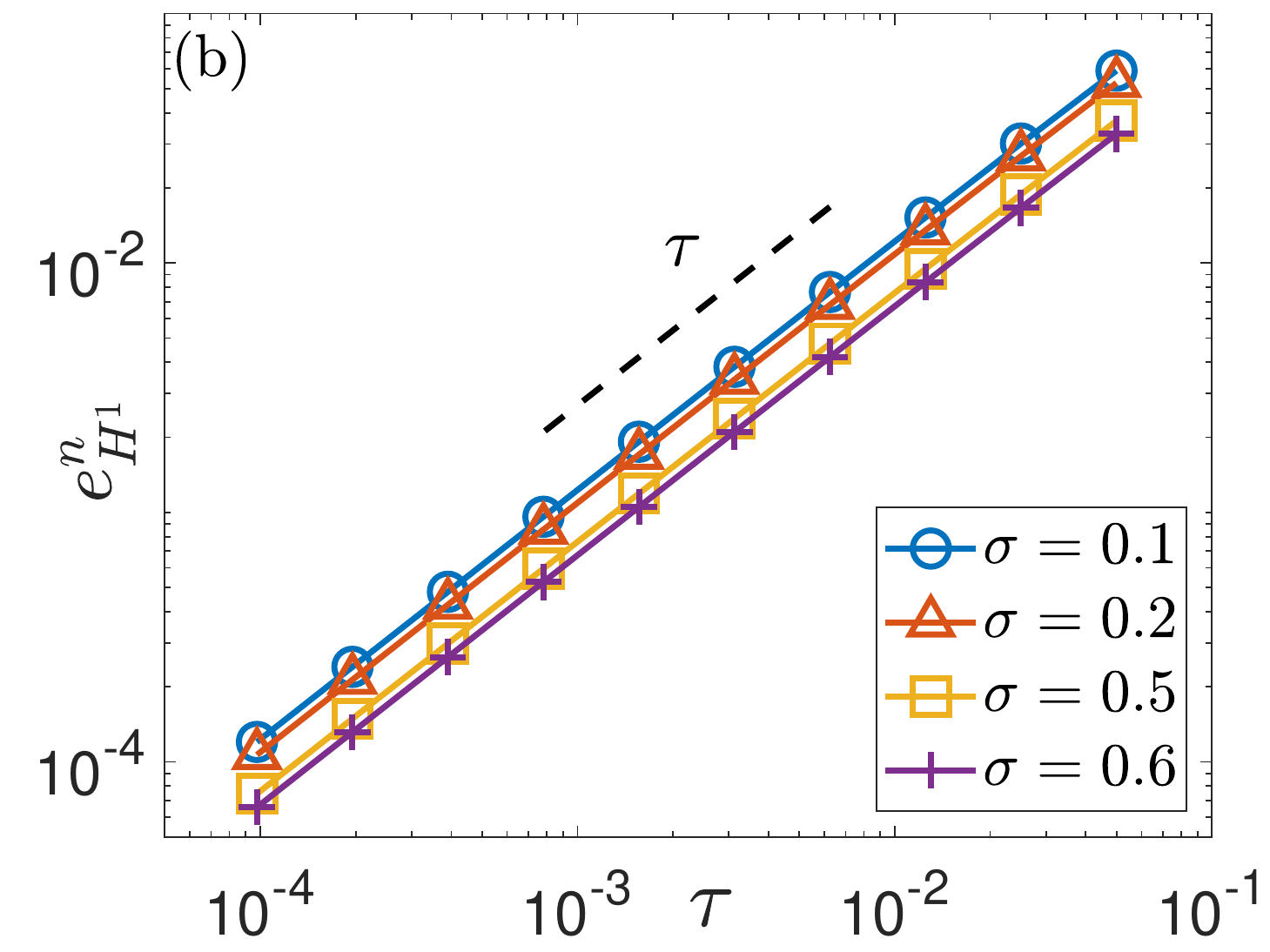}}
	\caption{Temporal errors of the EWI for the NLSE \cref{eq:NLSE_semi-smooth} with Type II initial datum \cref{typeII_ini}: (a) $L^2$-norm errors, and (b) $H^1$-norm errors.}
	\label{fig:conv_dt_semi_smooth_smooth_ini}
\end{figure}

\subsection{For the NLSE with low regularity potential}
In this subsection, we only consider the cubic NLSE with low regularity potential as
\begin{equation}\label{eq:NLSE_Linfty_poten}
	i \partial_t \psi(x, t) = -\Delta \psi(x, t) + V(x)\psi(x, t) - |\psi(x, t)|^2 \psi(x, t), \quad \vx \in \Omega, \quad t>0,
\end{equation}
where $ V $ is chosen as either $ V_1 \in L^\infty(\Omega) $ or $ V_2 \in W^{1, 4}(\Omega) $ defined as
\begin{equation}\label{eq:potential}
	V_1(x) = \left\{
	\begin{aligned}
		&-4, &x \in (-2, 2) \\
		&0, &\text{otherwise}
	\end{aligned}
	\right., \qquad V_2(x) = |x|^{0.76}, \qquad x \in \Omega.
\end{equation}

We shall test the convergence orders for the NLSE \cref{eq:NLSE_Linfty_poten} with $ V=V_1 $ and $ \psi_0 \in H^2(\Omega) $, and $ V = V_2 $ and $ \psi_0 \in H^3(\Omega) $, respectively. The 'exact' solutions are computed by the EWI-EFP method with $ \tau = \tau_\text{e} := 10^{-6} $ and $ h = h_\text{e} := 2^{-9} $. {When test the spatial errors, we fix the time step size $\tau = \tau_e$, and when test the temporal errors, we fix the mesh size $h = h_e$.}

We start with the spatial error and compare the performance of the extended Fourier pseudospectral method and the standard Fourier pseudospectral (FP) method which can be obtained by replacing $ \widehat{(V I_N \psihhn{n})_l} $ with $ \widetilde{(V \psihhn{n})_l} $ in \cref{EWI-EFP}. We remark here that, since the nonlinearity is smooth in \cref{eq:NLSE_Linfty_poten}, the results of the EWI-FS method are almost the same as those of the EWI-EFP method.

\cref{fig:Linfty_poten_H2_ini} (a) shows the spatial error in $ L^2 $- and $ H^1 $-norm of the EWI-EFP method (solid lines) and the EWI-FP method (dotted lines) with $ V = V_1 \in L^\infty(\Omega)$ given in \cref{eq:potential} and $ \psi_0 \in H^2(\Omega) $ given in \cref{typeI_ini}. We can observe that the EWI-EFP is second order convergent in $ L^2 $-norm and first order convergent in $ H^1 $-norm in space. However, the spatial convergence order of the EWI-FP method is only first order in both $ L^2 $- and $ H^1 $-norm, and the value of the error is much larger. This implies that when discretizing purely $ L^\infty $-potential, the extended Fourier pseudospectral method is much better than the standard Fourier pseudospectral method. \cref{fig:Linfty_poten_H2_ini} (b) plots the temporal convergence of the EWI in $ L^2 $- and $ H^1 $-norm with the Type I initial datum. We can observe that the EWI is first order convergent in $ L^2 $-norm and half order convergent in $ H^1 $-norm in time.

The results in \cref{fig:Linfty_poten_H2_ini} validate our optimal $ L^2 $-norm error bound for the NLSE with $L^\infty$-potential and demonstrate that it is sharp.

\begin{figure}[htbp]
	\centering
	{\includegraphics[width=0.475\textwidth]{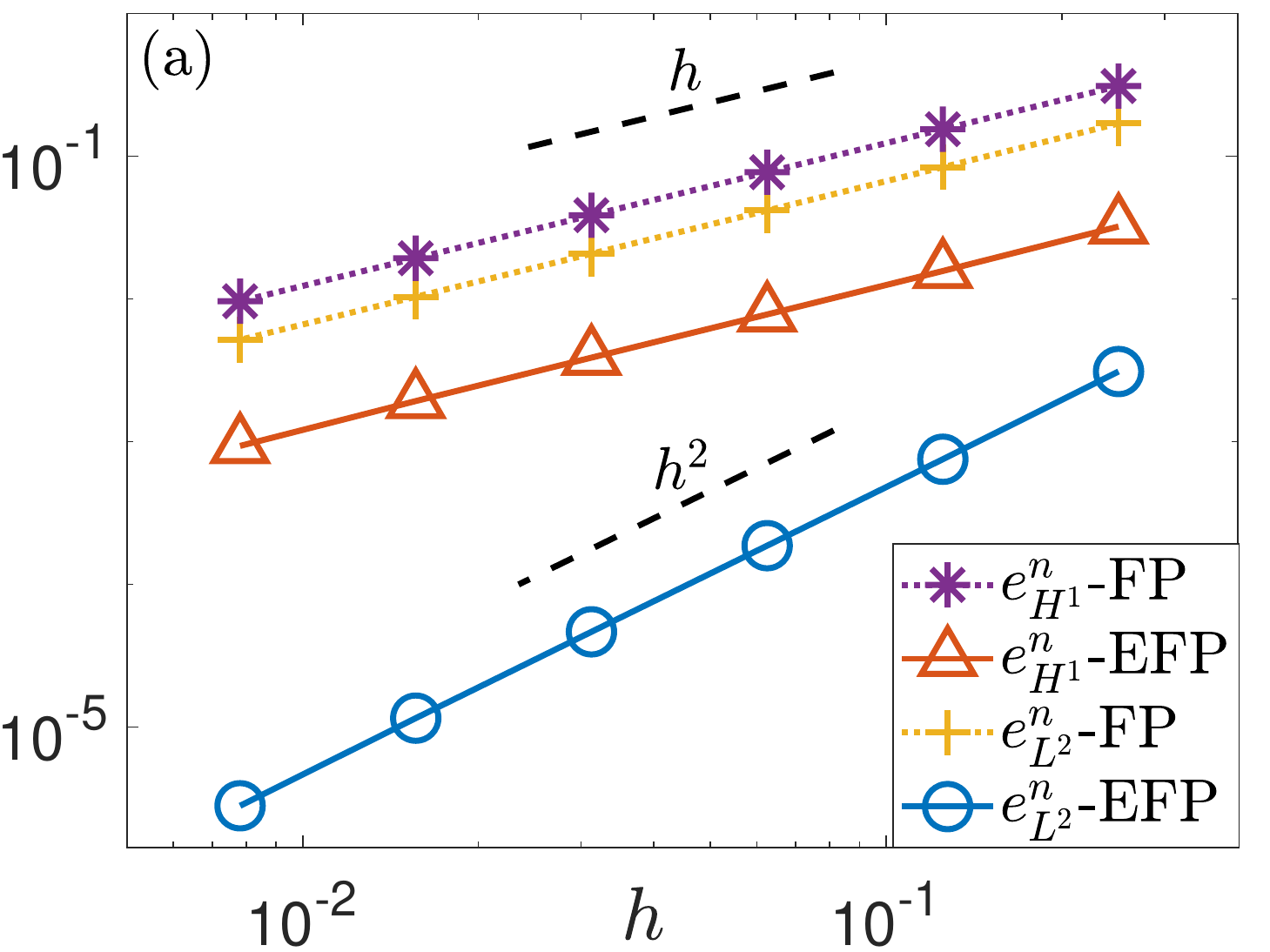}}
	{\includegraphics[width=0.475\textwidth]{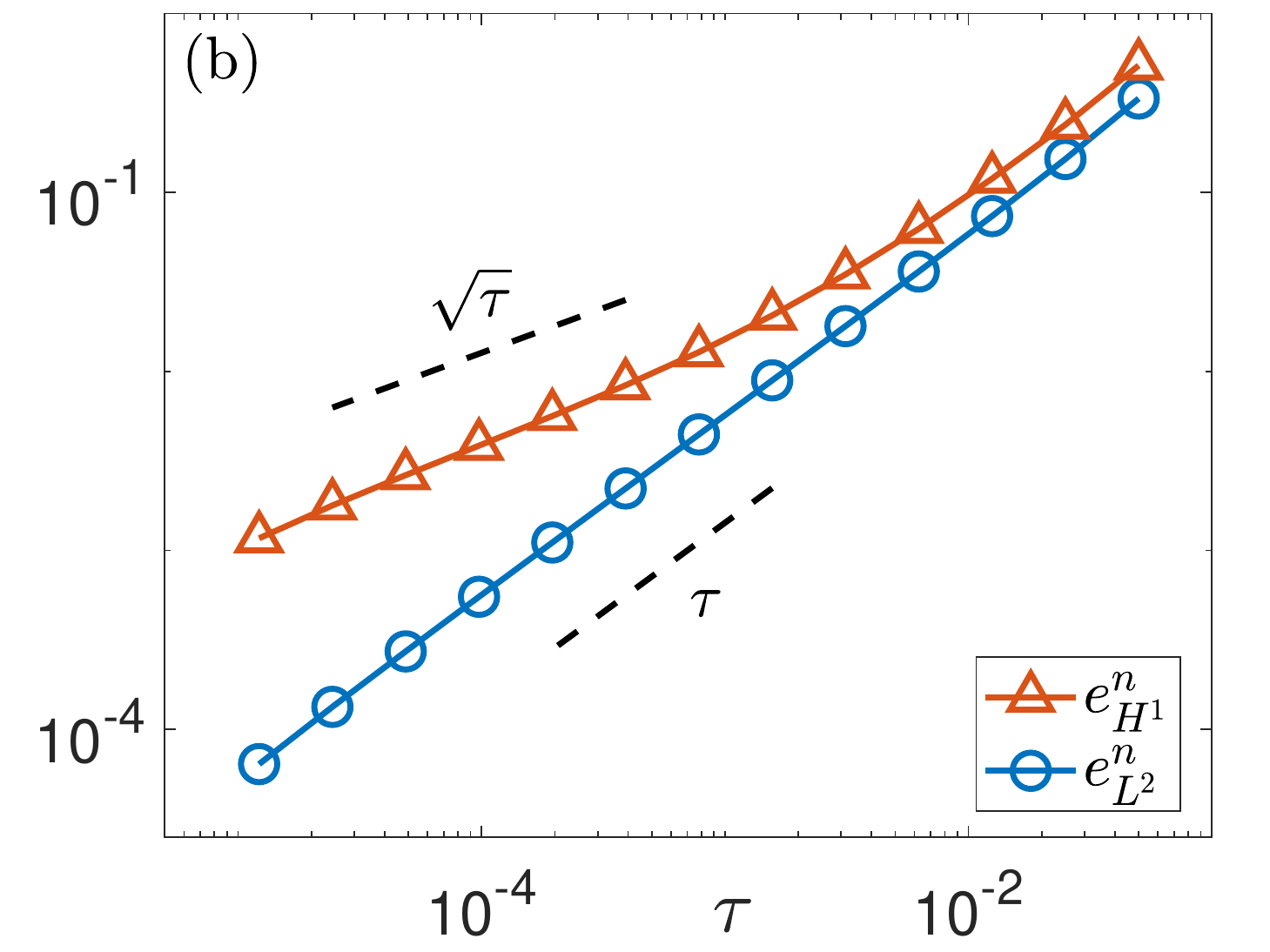}}
	\caption{Convergence tests of the EWI for \cref{eq:NLSE_Linfty_poten} with $ V=V_1 \in L^\infty(\Omega) $ and $ \psi_0 \in H^2(\Omega) $: (a) spatial errors
of the Fourier spectral and pseudospectral discretizations for the linear potential,
and  (b) temporal errors in $L^2$-norm and $H^1$-norm. }
	\label{fig:Linfty_poten_H2_ini}
\end{figure}

\begin{figure}[htbp]
	\centering
	{\includegraphics[width=0.475\textwidth]{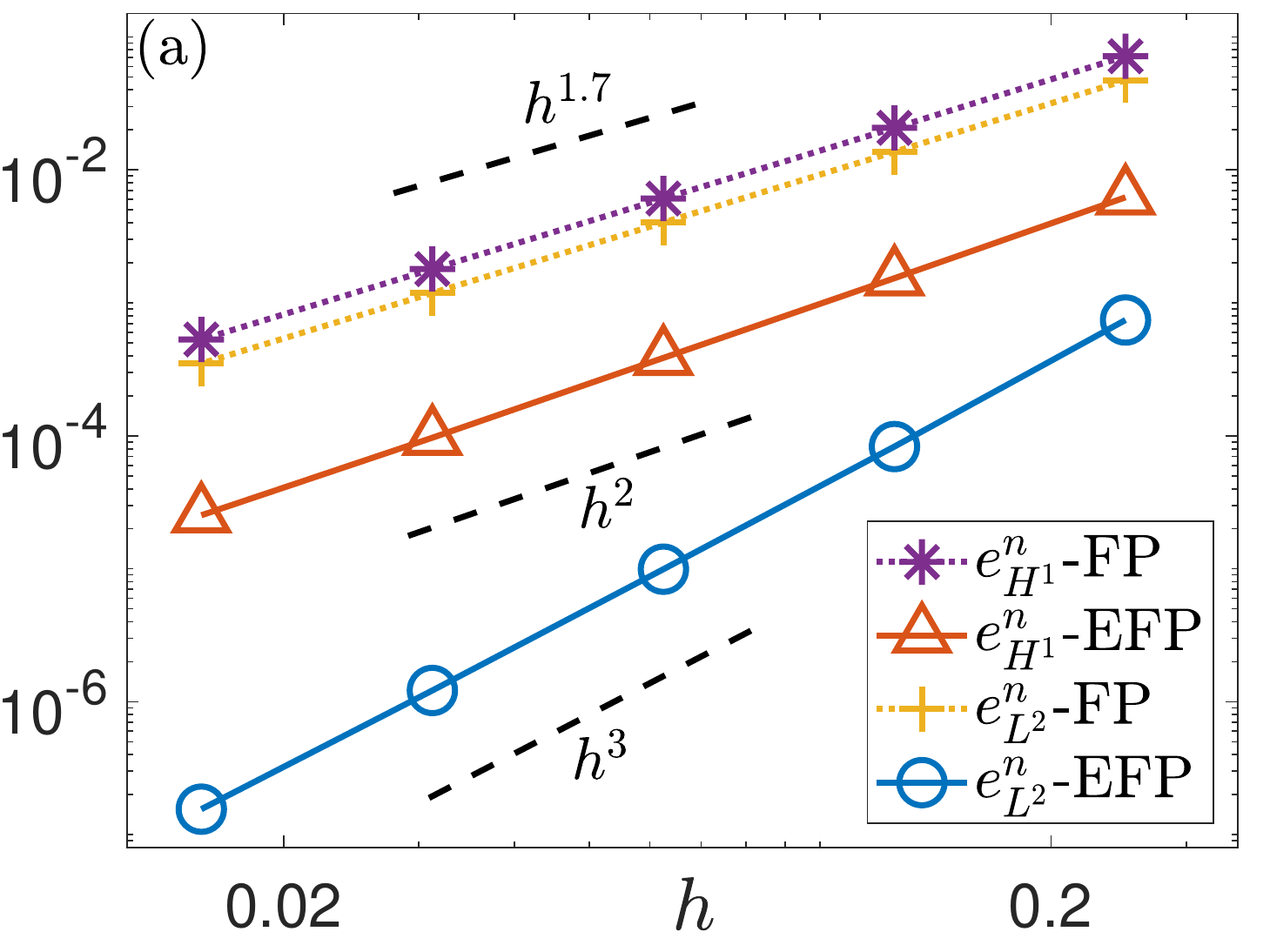}}
	{\includegraphics[width=0.475\textwidth]{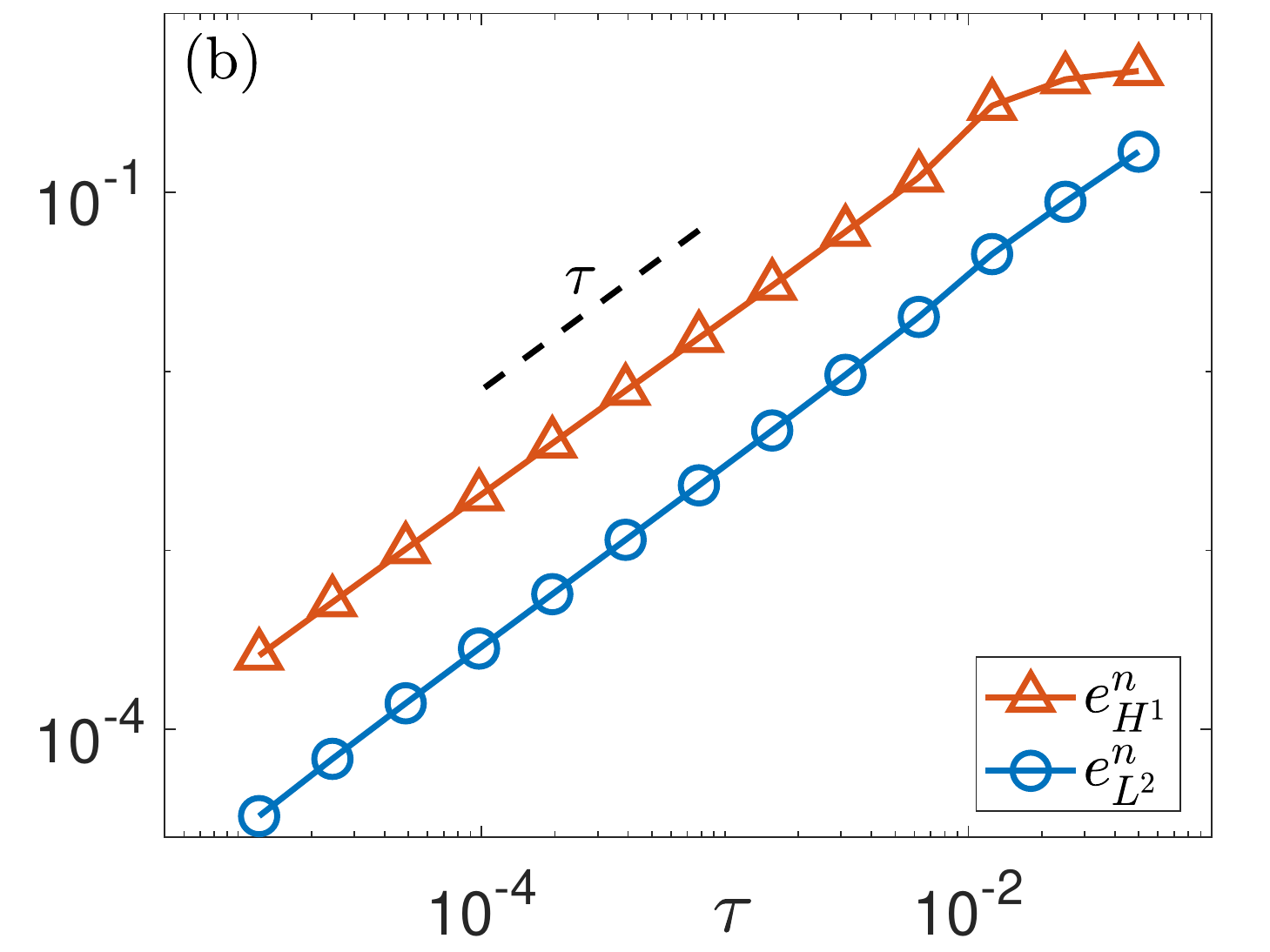}}
	\caption{Convergence tests of the EWI for \cref{eq:NLSE_Linfty_poten} with
$ V=V_2 \in W^{1, 4}(\Omega) $ and $ \psi_0 \in H^3(\Omega) $: (a) spatial errors
of the Fourier spectral and pseudospectral discretizations for the linear potential,
and  (b) temporal errors in $L^2$-norm and $H^1$-norm. }
	\label{fig:W14_poten_H3_ini}
\end{figure}

\cref{fig:W14_poten_H3_ini} (a) shows the spatial error in $ L^2 $- and $ H^1 $-norm of the EWI-EFP method (solid lines) and the EWI-FP method (dotted lines) with $ V = V_2 \in W^{1, 4}(\Omega)$ given in \cref{eq:potential} and $ \psi_0 \in H^3(\Omega) $ given by $ \psi_0(x) = (1+|x|^{2.51})e^{-x^2/2} $. We can observe that the EWI-EFP is third order convergent in $ L^2 $-norm and second order convergent in $ H^1 $-norm in space. However, the spatial convergence order of the EWI-FP method is only $ 1.7 $ order in both $ L^2 $- and $ H^1 $-norm, and the value of the error is much larger. This implies again that the extended Fourier pseudospectral method is much better than the standard Fourier pseudospectral method when the potential is of low regularity. \cref{fig:W14_poten_H3_ini} (b) plots the temporal convergence of the EWI in $ L^2 $- and $ H^1 $-norm with the $ H^3 $ initial datum. We can observe that the EWI is first order convergent in $ H^1 $-norm in time for $ V \in W^{1, 4}(\Omega) $.

The results in \cref{fig:W14_poten_H3_ini} validate our optimal $ H^1 $-norm error bound for the NLSE with $ W^{1, 4} $-potential and demonstrate that it is sharp.

{

\subsection{Comparison with the time-splitting method}
In this subsection, we present some numerical results to compare the performance of the EWI and the time-splitting method applied to the NLSE with low regularity potential and nonlinearity. To be precise, we compare the EWI with the first-order Lie-Trotter time-splitting method with standard Fourier pseudospectral method for spatial discretization (abbreviated as TSFP in the following). Here, we fix $h = h_e$ and compare the temporal errors, roughly speaking, this is equivalent to do comparison for semi-discretization in time by different time integrators. 

First, we consider the NLSE \cref{eq:NLSE_semi-smooth} with low regularity nonlinearity $\sigma = 0.1 $ and the smooth initial datum \cref{typeII_ini}. In \cref{fig:comp_nonl}, we can observe that both the EWI and the TSFP are first order convergent in $L^2$-norm, although the value of the error of the TSFP method is smaller than the EWI. However, when measured in $H^1$-norm, the EWI is still first order convergent (although this is not covered by our error estimates as already mentioned in the discussion of \cref{fig:conv_dt_semi_smooth_smooth_ini}), but the error of the TSFP method fluctuates a lot, and leads to order reduction. 

\begin{figure}[htbp]
	\centering
	{\includegraphics[width=0.475\textwidth]{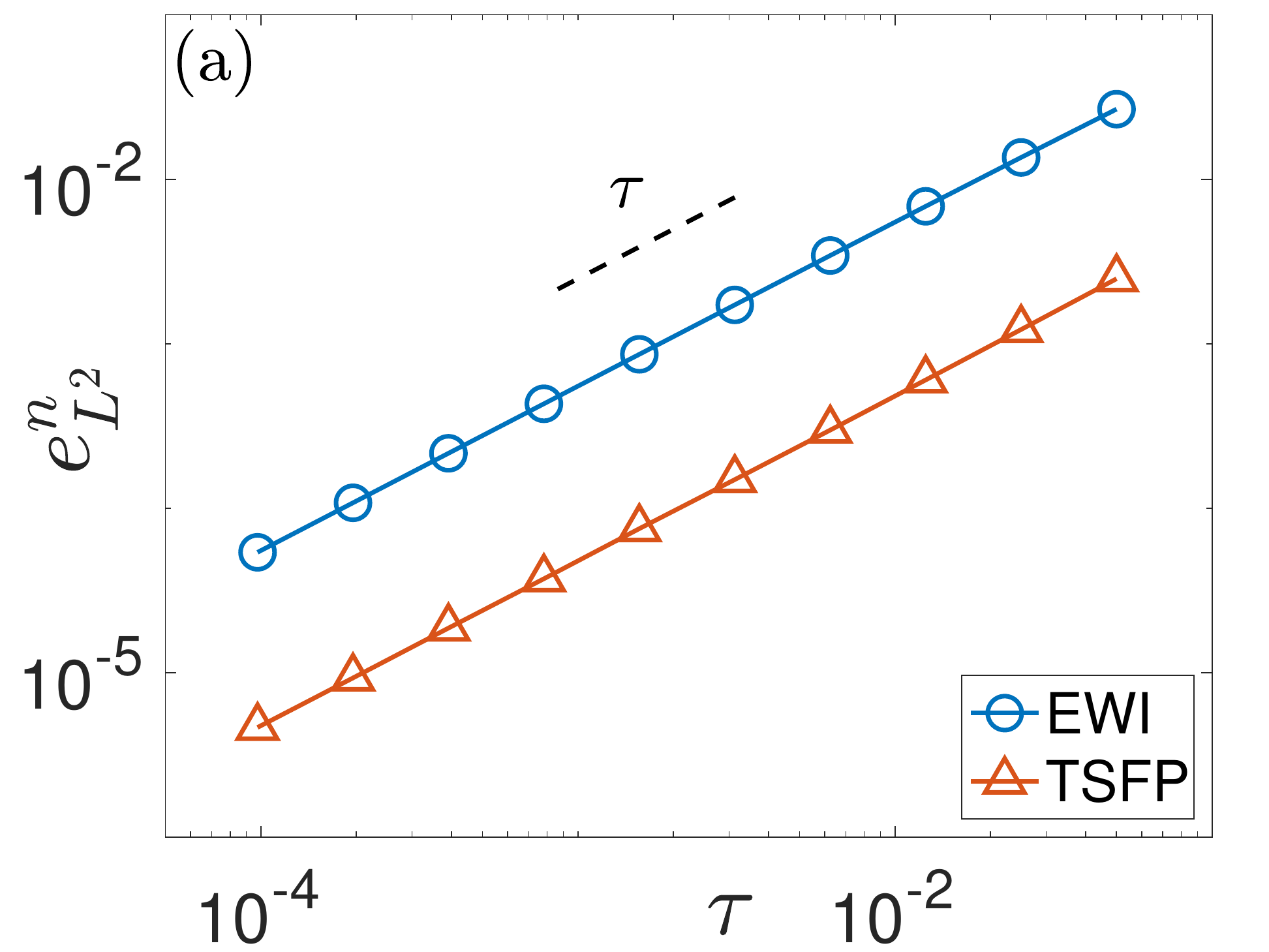}}
	{\includegraphics[width=0.475\textwidth]{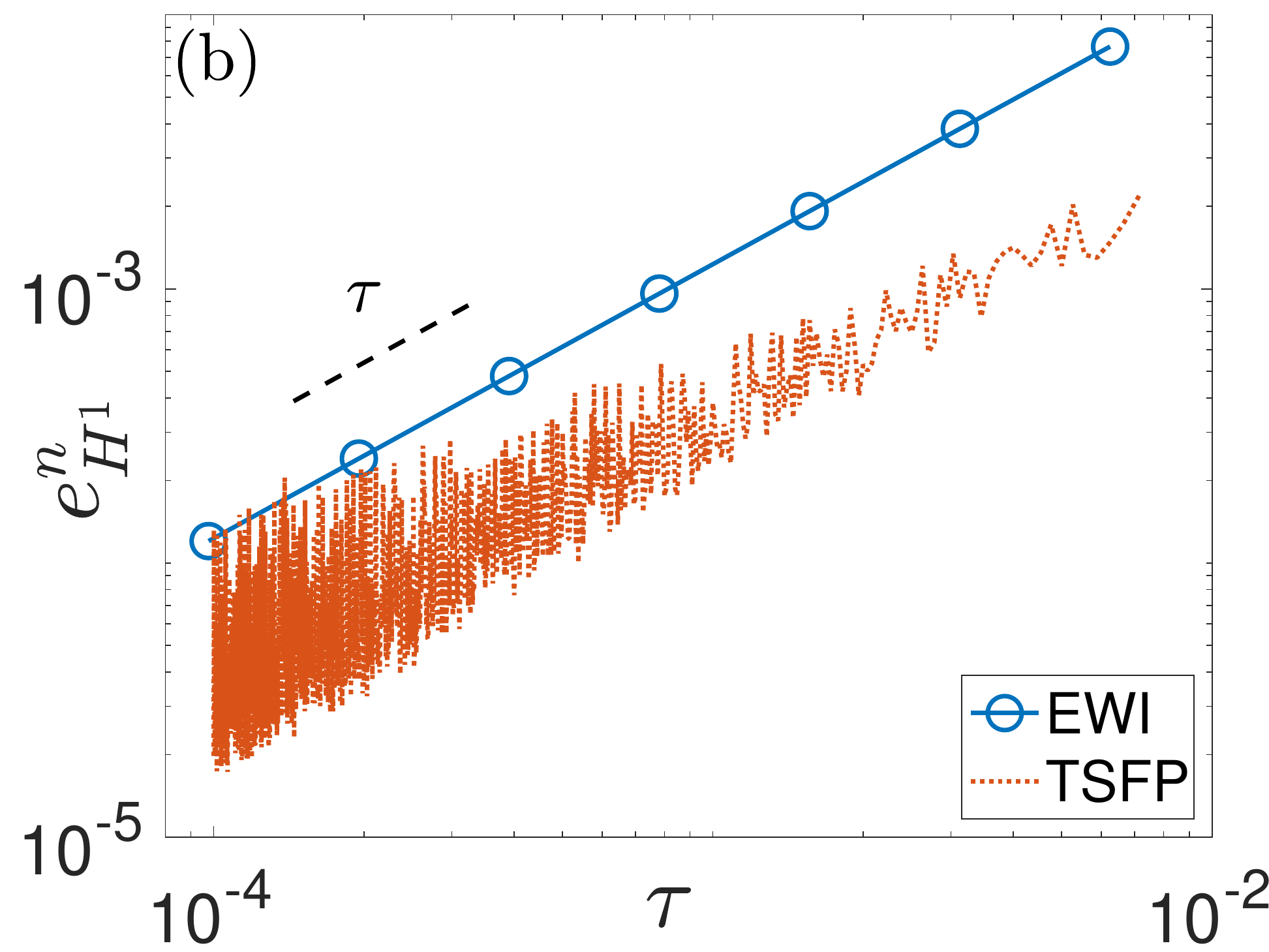}}
	\caption{comparison of EWI and LTFP for the NLSE \cref{eq:NLSE_semi-smooth} with $\sigma = 0.1$: (a) temporal errors in $L^2$-norm and (b) temporal errors in $H^1$-norm.}
	\label{fig:comp_nonl}
\end{figure}

Then we consider the NLSE \cref{eq:NLSE_Linfty_poten} with low regularity potential $V = V_1 \in L^\infty(\Omega)$ in \cref{eq:potential} and an $H^2$-initial data given in \cref{typeI_ini}. In \cref{fig:comp_Linfty_poten}, we can observe that the EWI is first order and half order convergent in $L^2$- and $H^1$-norm, respectively. However, both the $L^2$- and $H^1$-error of the TSFP method fluctuates drastically and suffer from sever order reduction. 

\begin{figure}[htbp]
	\centering
	{\includegraphics[width=0.475\textwidth]{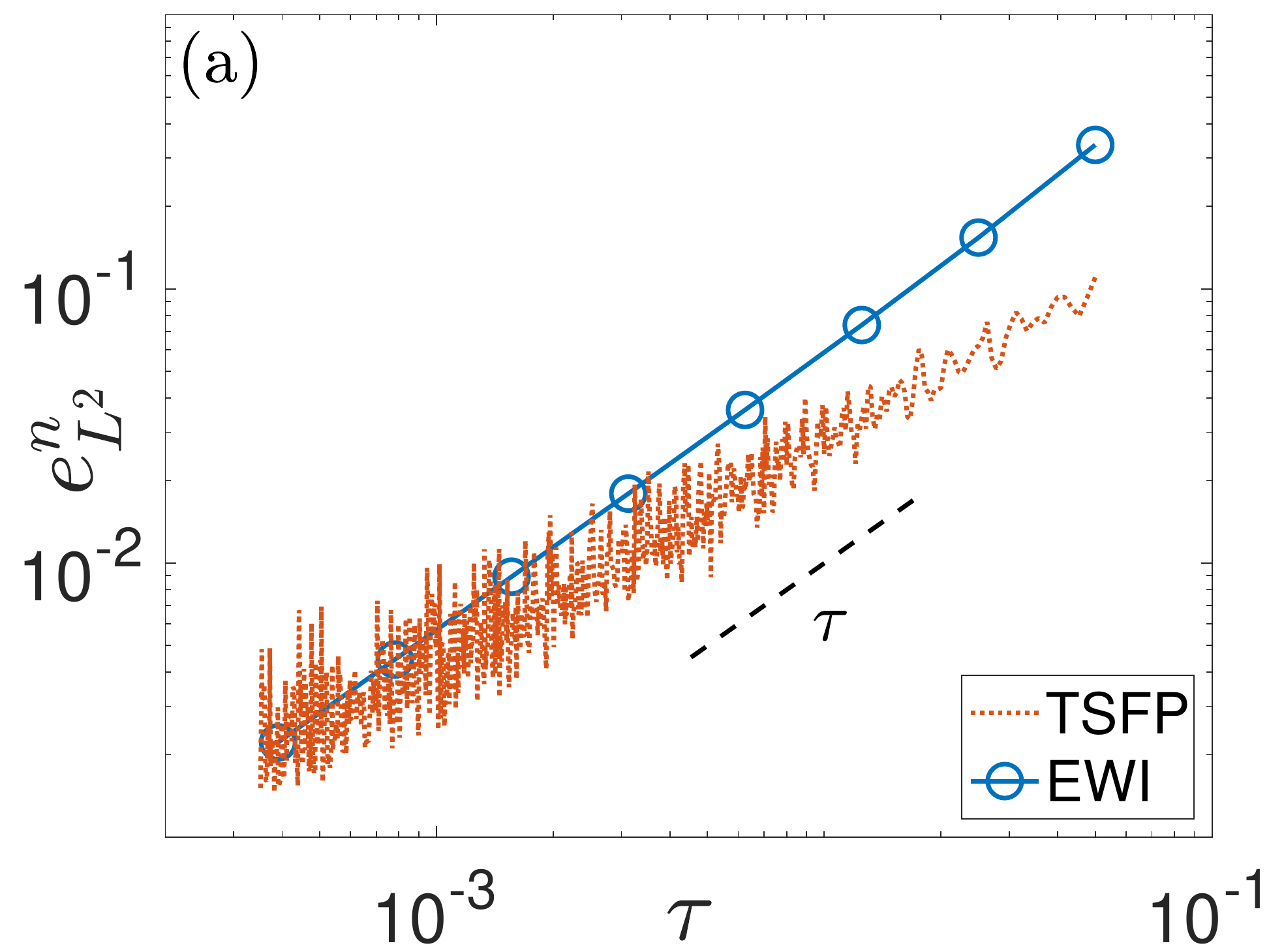}}
	{\includegraphics[width=0.475\textwidth]{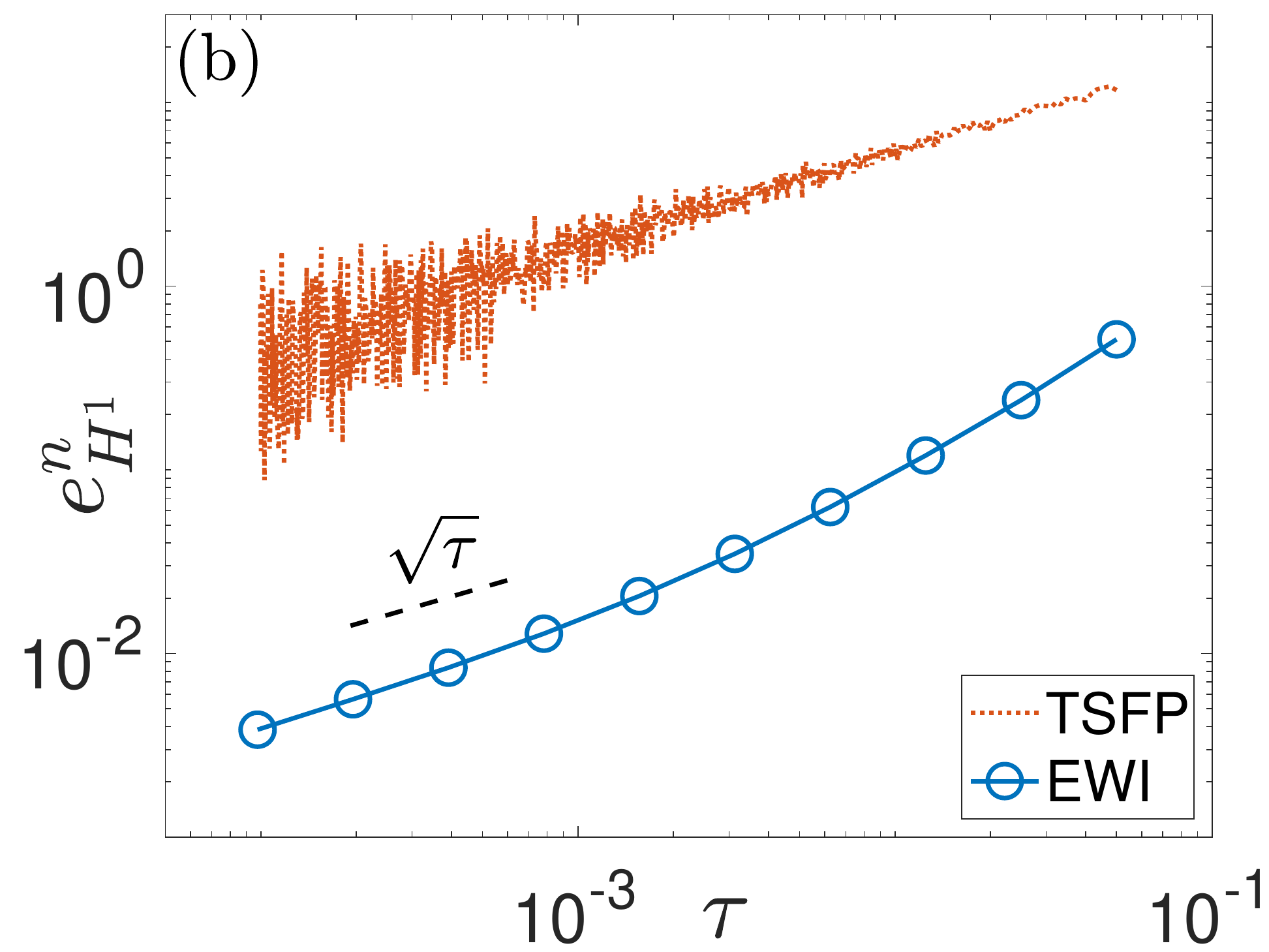}}
	\caption{comparison of EWI and LTFP for the NLSE \cref{eq:NLSE_Linfty_poten} with $V = V_1 \in L^\infty(\Omega)$ in \cref{eq:potential}: (a) temporal errors in $L^2$-norm and (b) temporal errors in $H^1$-norm.}
	\label{fig:comp_Linfty_poten}
\end{figure}

Based on the discussion above, we can conclude that in general, the EWI is better than the TSFP method when approximating the NLSE with low regularity potential and nonlinearity. However, the numerical results also necessitate the design and analysis of higher order and structure-preserving (e.g. time symmetric) EWIs for better error constant. This will be considered in our future work. 
}

\section{Conclusions}
We established optimal error bounds for the first-order Gautschi-type exponential wave integrator (EWI) applied to the nonlinear Schr\"odinger equation (NLSE) with $ L^\infty $-potential and/or locally Lipschitz nonlinearity under the assumption of $ H^2 $-solution. For the semi-discretization in time by the first-order Gautschi-type EWI, we proved an optimal $ L^2 $-norm error bound at $ O(\tau) $ and a uniform $ H^2 $-bound of the numerical solution. For the full discretization obtained from the semi-discretization by using the Fourier spectral method in space, we proved an optimal $ L^2 $-norm error bound at $ O(\tau + h^2) $ without any coupling condition between $ \tau $ and $ h $. For $ W^{1, 4} $-potential and a little more regular nonlinearity, under the assumption of $ H^3 $-solution of the NLSE, we proved optimal $ H^1 $-norm error bounds for both the semi-discrete and fully discrete schemes. As a by-product, we proposed an extended Fourier pseudospectral method to implement the full discretization when the potential is of low regularity and the nonlinearity is smooth,
in which the potential and nonlinearity were discretized by the Fourier
spectral method and the Fourier pseudospectral method, respectively.
The proposed numerical implementation has similar computational cost as the standard
Fourier pseudospectral method, but we can establish rigorous error bounds for this method. On the contrary, one cannot establish optimal error bounds for the
standard Fourier pseudospectral method for the NLSE when the potential is of low regularity, e.g. $V\in L^\infty$. In the future, we will consider even weaker potential, e.g. $V\in L^1$, including Coulomb potential and/or spatial/temporal Dirac delta potential.

\bibliographystyle{siamplain}

\end{document}